\newcommand{\R}{\mathbb{R}}
\newcommand{\Z}{\mathbb{Z}}
\renewcommand{\H}{\mathbb{H}}
\renewcommand{\S}{\mathbb{S}}
\newcommand{\T}{\mathbb{T}}
\newcommand{\M}{\mathbb{M}}
\renewcommand{\l}{\lambda}
\renewcommand{\t}{\theta}
\renewcommand{\a}{\alpha}
\newcommand{\g}{\gamma}
\newcommand{\G}{\Gamma}
\newcommand{\0}{\mathbf{0}}
\newcommand{\Ome}{\Omega}
\newcommand{\boP}{\mathcal{P}}
\newcommand{\eps}{\varepsilon}
\newcommand{\boQ}{\mathcal{Q}}
\DeclareMathOperator{\Div}{div}
\DeclareMathOperator{\Isom}{Isom}
\definecolor{darkgreen}{rgb}{0,0.5,0}
\newtheorem{theorem}{Theorem}[section]
\newtheorem{proposition}[theorem]{Proposition}
\newtheorem{remark}[theorem]{Remark}
\newtheorem{claim}[theorem]{Claim}
\title{Periodic constant mean curvature surfaces in $\H^2\times\R$}
\author{Laurent Mazet, M. Magdalena Rodr\'\i guez\thanks{Research
    partially supported by a Spanish MEC-FEDER Grant
    no. MTM2007-61775, a Regional J. Andaluc\'\i a Grant
    no. P09-FQM-5088 and a Spanish MICINN Grant no. PYR-2010-21 of the
    CEI BioTIC GENIL Proyect (CEB09-0010).} and Harold Rosenberg}
\begin{document}

\maketitle


\section{Introduction}\label{sec:intro}

A properly embedded surface $\Sigma$ in $\H^2\times\R$, invariant by a
non-trivial discrete group of isometries of $\H^2\times\R$, will be
called a periodic surface.  We will discuss periodic minimal and
constant mean curvature surfaces.  At this time, there is little
theory of these surfaces in $\H^2\times\R$ and other homogeneous
3-manifolds, with the exception of the space forms.

The theory of doubly periodic minimal surfaces (invariant by a $\Z^2$
group of isometries) in $\R^3$ is well developed. Such a surface in
$\R^3$, not a plane, is given by a properly embedded minimal surface
in $\T\times\R$, $\T$ some flat $2$-torus. One main theorem is that a
finite topology complete embedded minimal surface in $\T\times\R$ has
finite total curvature and one knows the geometry of the ends
\cite{MeRo2}. It is very interesting to understand this for such
minimal surfaces in $\M^2\times\R$, $\M^2$ a closed hyperbolic
surface.

In this paper we will consider periodic surfaces in $\H^2\times\R$.
The discrete groups of isometries of $\H^2\times\R$ we consider are
generated by horizontal translations $\phi_l$ along geodesics of
$\H^2$ and/or a vertical translation $T(h)$ by some $h>0$. We denote
by $\M$ the quotient of $\H^2\times\R$ by $G$.

In the case $G$ is the $\Z^2$ subgroup of the
isometry group generated by $\phi_l$ and $T(h)$, $\M$ is diffeomorphic
but not isometric to $\T\times \R$.  Moreover $\M$ is foliated by the
family of tori $\T(s)=(d(s)\times\R)/G$ (here $d(s)$ is an equidistant
to $\gamma$). All the $\T(s)$ are intrinsically flat and have constant
mean curvature; $\T(0)$ is totally geodesic.  In
Section~\ref{sec:alexandrov}, we will prove an Alexandrov-type theorem
for doubly periodic $H$-surfaces, i.e., an analysis of compact
embedded constant mean curvature surfaces in such a
$\M$ (Theorem~\ref{thm:alexandrov}).

The remainder of the paper is devoted to construct examples of
periodic minimal surfaces in $\H^2\times\R$.

The first example we want to illustrate is the singly periodic Scherk
minimal surface.  In $\R^3$, it can be understood as the
desingularization of two orthogonal planes. H.~Karcher \cite{Kar} has
generalized this to desingularize $k$ planes of $\R^3$ meeting along a
line at equal angles, these are called Saddle Towers. In
$\H^2\times\R$, two situations are similar to these ones: the
intersection of a vertical plane with the horizontal slice
$\H^2\times\{0\}$ and the intersection of $k$ vertical planes meeting
along a vertical geodesic at equal angles. These surfaces, constructed
in Section~\ref{sec:singly}, are singly periodic and called,
respectively, ``horizontal singly periodic Scherk minimal surfaces''
and ``vertical Saddle Towers''. For vertical intersections, the
situation is in fact more general and was treated by F.~Morabito and
the second author in \cite{MoRo}; here we give another approach which
is more direct (see also J.~Pyo~\cite{Pyo2}).

In Section~\ref{sec:2per}, we construct doubly periodic minimal
examples. The first examples we obtain, called ``doubly periodic
Scherk minimal surfaces'' bounded by four horizontal geodesics; two at
height zero, and two at height $h>\pi$. The latter two geodesics are
the vertical translation of the two at height zero. Each one of these
Scherk surfaces has two ``left-side'' ends asymptotic to two vertical
planar strips, and two ``right-side'' ends, asymptotic to the
horizontal slices at heights zero and $h$. By recursive rotations by
$\pi$ about the horizontal geodesics, we obtain a doubly periodic
minimal surface.

The other doubly periodic minimal surfaces of $\H^2\times\R$
constructed in Section~\ref{sec:2per} are analogous to some Karcher's
Toroidal Halfplane Layers of $\R^3$ (more precisely, the ones denoted
by $M_{\t,0,\pi/2},\ M_{\t,\pi/2,0}$ and $M_{\t,0,0}$ in~\cite{Rod}).
The examples we construct, also called Toroidal Halfplane Layers, are
all bounded by two horizontal geodesics at height zero, and its
translated copies at height $h>0$. Each ot these Toroidal Halfplane
Layers has two ``left-side'' ends and two ``right-side'' ends, all of
them asymptotic to either vertical planar strips or horizontal strips,
bounded by the horizontal geodesics in its boundary. By recursive
rotations by $\pi$ about the horizontal geodesics, we obtain a doubly
periodic minimal surface.  In the quotient of $\H^2\times\R$ by a
horizontal hyperbolic translation and a vertical translation leaving
invariant the surface, we get a finitely punctured minimal torus and
Klein bottle in $\T\times\R$, $\T$ some flat 2-torus.

Finally, in Section~\ref{sec:isomH2}, we construct a periodic minimal
surface in $\H^2\times\R$ analogous to the most symmetric Karcher's
Toroidal Halfplane Layer in $\R^3$ (denoted by $M_{\t,0,0}$
in~\cite{Rod}). A fundamental domain of this latter surface can be
viewed as two vertical strips with a handle attached. This piece is a
bigraph over a domain $\Omega$ in the parallelogram of the
$\R^2\times\{0\}$ plane whose vertices are the horizontal projection
of the four vertical lines in the boundary of the domain, and the
upper graph has boundary values $0$ and $+\infty$: The trace of the
surface on $\R^2\times\{0\}$ are the two concave curves in the
boundary of $\Omega$. They are geodesic lines of curvature on the
surface and their concavity makes the construction of these surfaces
delicate. We refer to \cite{Kar,MeRo2,Rod}, where they are constructed
by several methods.  The complete surface is obtained by rotating by
$\pi$ about the vertical lines in the boundary. Considering the
quotient of $\R^3$ by certain horizontal translations leaving
invariant the surface, yields finitely punctured minimal tori and
Klein bottles in $\T\times\R$.

The surface we construct in $\H^2\times\R$ will have a fundamental
domain $\Sigma$ which may be viewed as $k$ vertical strips ($k\ge 3$)
to which one attaches a sphere with $k$ disks removed. $\Sigma$ is a
vertical bigraph over a domain $\Omega\subset
\H^2\times\{0\}\equiv\H^2$; $\partial\Ome$ has $2k$ smooth arcs
$A_1,B_1,\cdots,A_k,B_k$ in that order. Each $A_i$ is a geodesic and
each $B_j$ is concave towards $\Ome$. The $A_i$'s are of equal length
and the $B_j$'s as well. The convex hull of the vertices of $\Ome$ is
a polygonal domain $\widetilde\Ome$ that tiles $\H^2$; the interior
angles of the vertices of $\widetilde{\Ome}$ are $\pi/2$. Thus
$\Sigma$ extends to a periodic minimal surface in $\H^2\times\R$ by
symmetries: rotation by $\pi$ about the vertical geodesic lines over
the vertices of $\partial\Ome$.

The surface $\Sigma_+=\Sigma \cap (\H^2\times\R^+)$ is a graph over
$\Ome$ with boundary values as indicated in
Figure~\ref{fig:domainomega} (here $k=4$).  $\Sigma_+$ is orthogonal
to $\H^2\times\{0\}$ along the concave arcs $B_j$ so $\Sigma$ is the
extension of $\Sigma_+$ by symmetry through $\H^2\times\{0\}$.

\begin{figure}[h]
\begin{center}
\resizebox{0.6\linewidth}{!}{\input{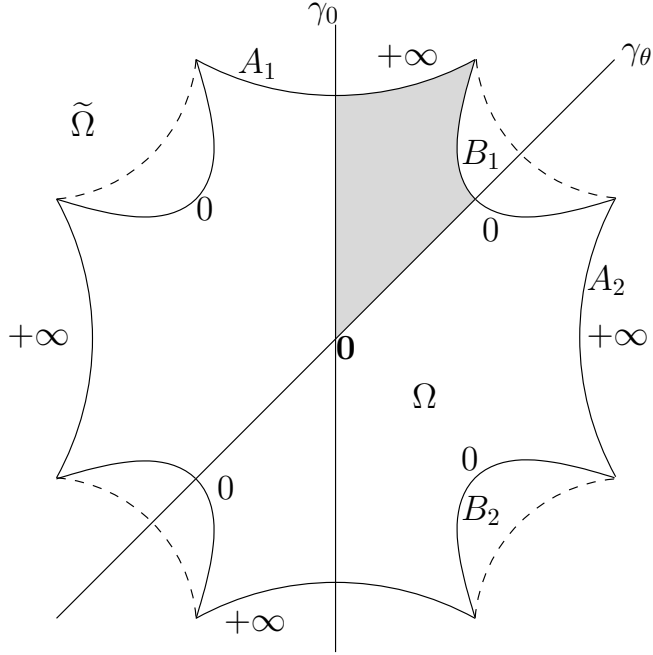}}
\caption{The domain $\Omega$ \label{fig:domainomega}}
\end{center}
\end{figure}

$\Sigma$ will be constructed by solving a Plateau problem for a
certain contour and taking the conjugate surface of this Plateau
solution. The result will be the part of $\Sigma_+$ which is a graph
over the shaded region on $\Ome$ in the
Figure~\ref{fig:domainomega}. This graph meets the vertical plane over
$\g_0$ and $\g_\t$ orthogonally, so extends by symmetry in these
vertical planes. $\Sigma_+$ is then obtained by going around $\0$ by
$k$ symmetries.


\section{Preliminaries}\label{sec:preliminaires}


\subsection{Notation}
In this paper, the Poincar\'e disk model is used for the hyperbolic
plane, i.e.
$$
\H^2=\{(x,y)\in\R^2\ |\ x^2+y^2<1\}
$$
with the hyperbolic metric $g_{-1}=\frac{4}{(1-x^2-y^2)^2} g_0$, where
$g_0$ is the Euclidean metric in $\R^2$. Thus $x$ and $y$ will be used
as coordinates in the hyperbolic space. We denote by ${\bf 0}$ the
origin $(0,0)$ of $\H^2$. In this model, the asymptotic boundary
$\partial_\infty\H^2$ of $\H^2$ is identified with the unit circle. So
any point in the closed unit disk is viewed as either a point in
$\H^2$ or a point in $\partial_\infty\H^2$.

Let $\t\in\R$. In $\H^2$, we denote by $\gamma_\t$ the geodesic line
$\{-x\cos\t+y\sin\t=0\}$ and by $\g_\t^+$ the half geodesic line from
$\0$ to $(\sin\t,\cos\t)$. We also denote by $T_\t$ the hyperbolic
angular sector $\{(r\sin u,r\cos u)\in\H^2, r\in[0,1),u\in[0,\t]\}$.

For $\mu\in(-1,1)$ we denote by $g(\mu)$ the complete geodesic of
$\H^2$ orthogonal to $\g_0$ at $q_\mu=(0,\mu)$. We have
$g(0)=\g_{\pi/2}$. We also denote $g^+(\mu)=g(\mu)\cap\{x>0\}$.

\begin{figure}
   \begin{center}
       \includegraphics[width=0.5\textwidth]{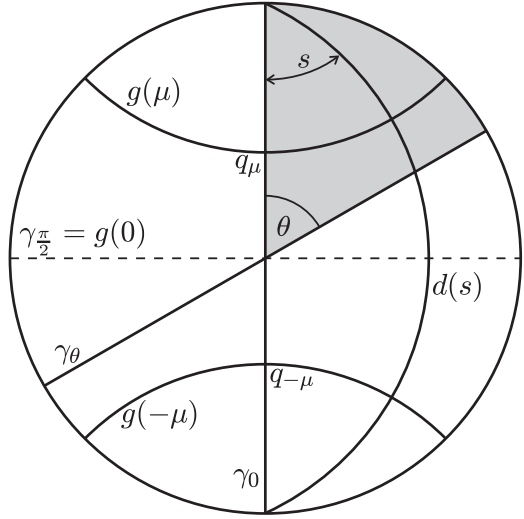}
   \end{center}
   \caption{The hyperbolic angular sector $T_\theta$ corresponds to
     the shadowed domain.}
   \label{fig:notacion}
\end{figure}

For $\t\in \R$, there exists a Killing vector field $Y_\t$ which has
length $1$ along $\g_\t$ and generated by the hyperbolic translation
along $\g_\t$ with $(\sin\t,\cos\t)$ as attractive fixed point at
infinity. For $l\in(-1,1)$, we denote by $\phi_l$ the hyperbolic
translation along $\g_\t$ with $\phi_l(\0)=(l\sin\t,l\cos\t)$.
$(\phi_l)_{l\in(-1,1)}$ is called the ``flow'' of $Y_\t$, even though
the family $(\phi_l)_{l\in(-1,1)}$ is not parameterized at the right
speed. We notice that, if $(\phi_l)_{l\in(-1,1)}$ is the flow of
$Y_0$, $g(\mu)=\phi_\mu(g(0))$.

For $\t\in\R$, there is another interesting vector field that we
denote by $Z_\t$. This vector field is the unit vector field normal to
the foliation of $\H^2$ by the equidistant lines to $\g_{\t+\pi/2}$
such that $Z_\t(\0)=(1/2) (\sin\t\partial_x+\cos\t\partial_y)$. We
notice that $Z_\t$ is not a Killing vector field. This time, we define
$(\psi_s)_{s\in \R}$ the flow of $Z_\t$ (with the right speed). If
$(\psi_s)_{s\in \R}$ is the flow of $Z_{\pi/2}$, we define
$d(s)=\psi_s(\g_0)$ for $s$ in $\R$. $d(s)$ is one of the equidistant
lines to $\g_0$ at distance $|s|$.  We remark that $Z_{\pi/2}$ is
tangent to the geodesic lines $g(\mu)$.

In the sequel, we denote by $t$ the height coordinate in
$\H^2\times\R$. Besides, we will often identify the hyperbolic plane
$\H^2$ with the horizontal slice $\{t=0\}$ of $\H^2\times\R$. The
Killing vector field $Y_\t$ and its flow naturally extend to a
horizontal Killing vector field and its flow in $\H^2\times\R$. The
same occurs for $Z_\t$ and its flow.

Besides we denote by $\pi:\H^2\times\R\rightarrow \H^2$ the vertical
projection and by $T(h)$ the vertical translation by $h$.  Given two
points $p$ and $q$ of $\H^2$ or $\H^2\times\R$, we denote by
$\overline{pq}$ the geodesic arc between these two points.


\subsection{Conjugate minimal surface}
\label{subsec:conj}

B.~Daniel \cite{Dan1} and L.~Hauswirth, R.~Sa~Earp and E.~Toubiana
\cite{HaSaTo} have proved that minimal disks in $\H^2\times\R$ have an
associated family of locally isometric minimal surfaces. In this
subsection we briefly recall how they are defined.

Let $X=(\varphi,h):\Sigma\rightarrow \H^2\times\R$ be a conformal
minimal immersion, with $\Sigma$ a simply connected Riemann
surface. Then $h$ is a real harmonic function and $\varphi=\pi\circ X$
is a harmonic map to $\H^2$. Let $h^*$ be the real harmonic conjugate
function of $h$ and $\boQ_\varphi$ be the Hopf differential of
$\varphi$. Since $X$ is conformal, we have
$$
\boQ_\varphi=-4\left(\frac{\partial h}{\partial z}\right)^2 dz^2,
$$
where $z$ is a conformal parameter on $\Sigma$. In~\cite{Dan1}
and~\cite{HaSaTo} it has been proved that, for any $\t\in\R$, there
exists a minimal immersion $X_\t=(\varphi_\t,h_\t):\Sigma\rightarrow
\H^2\times\R$ whose induced metric on $\Sigma$ coincides with the one
induced by $X$, and such that $h_\t=\cos\t h+\sin\t h^*$ and the Hopf
differential of $\varphi_\t$ is
$\boQ_{\varphi_\t}=e^{-2i\t}\boQ_\varphi$. If $N$ (resp. $N_\t$)
denotes the unit normal to $X$ (resp. $X_\t$), then $\langle
N,\partial_t\rangle=\langle N_\t,\partial_t\rangle$ (i.e. their angle
maps coincide).

All these immersions $X_\t$ are well-defined up to an isometry of
$\H^2\times\R$. The immersion $X_{\pi/2}$ is called the conjugate
immersion of $X$ (and $X_{\pi/2}(\Sigma)$ is usually called conjugate
minimal surface of $X(\Sigma)$), and it is denoted by $X^*$.

The data for the conjugate surface are the same as for $X(\Sigma)$,
except that one rotates $S$ and $T$ by $\pi/2$: $S^* = J S$, and $T^*=
J T$. Here $S$ (resp. $S^*$) denotes the symmetric operator on
$\Sigma$ induced by the shape operator of $X(\Sigma)$ (resp.
$X^*(\Sigma)$); $T$ (resp. $T^*$) is the vector field on $\Sigma$ such
that $dX(T)$ (resp. $dX^*(T^*)$) is the projection of $\partial_t$ on
the tangent plane of $X(\Sigma)$ (resp.  $X^*(\Sigma)$); and $J$ is
the rotation of angle $\pi/2$ on $T\Sigma$. See~\cite{Dan1} for more
details.

For $C$ a curve on $\Sigma$, the normal curvature of $C$ in the
surface $X(\Sigma)$ is $-\langle C', S(C')\rangle$, and the normal
torsion is $\langle J(C'), S(C')\rangle$. Thus the normal torsion of
$C$ on the conjugate surface $X^*(\Sigma)$ is minus the normal
curvature of $C$ on $X(\Sigma)$, and the normal curvature of $C$ on
$X^*(\Sigma)$ is the normal torsion of $C$ on $X(\Sigma)$. In
particular, if $C$ is a vertical ambient geodesic on $X(\Sigma)$, then
$C$ is a horizontal line of curvature on the conjugate surface
$X^*(\Sigma)$ whose geodesic curvature in the horizontal plane is the
normal torsion on $X(\Sigma)$. Arguing similarly, we get that the
correspondence $X \leftrightarrow X^*$ maps:
\begin{itemize}
\item vertical geodesic lines to horizontal geodesic curvature lines
  along which the normal vector field of the surface is horizontal;
  and
\item horizontal geodesics to geodesic curvature lines contained in
  vertical geodesic planes $\Pi$ (i.e. $\pi(\Pi)$ is a geodesic of
  $\H^2$) along which the normal vector field is tangent to $\Pi$.
\end{itemize}
Moreover, this correspondence exchanges the corresponding Schwarz
symmetries of the surfaces $X$ and $X^*$. For more definitions and
properties, we refer to \cite{Dan1,HaSaTo}.


\subsection{Some results about graphs}

In $\H^2\times\R$, there exist different notions of graphs, depending
on the vector field considered.

If $u$ is a function on a domain $\Ome$ of $\H^2$, the graph of $u$,
defined as
\[
\Sigma_u=\{(p,u(p))\ |\ p\in\Omega\} ,
\]
is a surface in $\H^2\times\R$. This surface is minimal (a vertical
minimal graph) if $u$ satisfies the vertical minimal graph equation
\begin{equation}\label{mse}
  \Div\left(\frac{\nabla u}{\sqrt{1+\|\nabla u\|^2}}\right)=0 ,
\end{equation}
where all terms are calculated with respect to the hyperbolic metric.

If $u$ is a solution of equation~\eqref{mse} on a convex domain of
$\H^2$, L.~Hauswirth, R.~Sa~Earp and E.~Toubiana have proved
in~\cite{HaSaTo} that the conjugate minimal surface $\Sigma_u^*$ of
$\Sigma_u$ is also a vertical graph.

Assume $\Omega$ is simply connected. The differential on $\Ome$ of the
height coordinate of $\Sigma_u^*$ is the closed $1$-form
\begin{equation}\label{def:omega}
  \omega_u^*(X)=\langle\frac{{\nabla u}^\perp}{\sqrt{1+\|\nabla
      u\|^2}},X\rangle_{\H^2} ,
\end{equation}
where ${\nabla u}^\perp$ is the vector $\nabla u$ rotated by
$\pi/2$. The height coordinate of $\Sigma_u^*$ is a primitive $h_u^*$
of $\omega_u^*$ and is the conjugate function of $h_u$ on
$\Sigma_u$. The formula \eqref{def:omega} comes from the following
computation. Let $h$ be the height function along the graph surface
and $h^*$ its conjugate harmonic function. Let $(e_1,e_2)$ an
orthonormal basis of the tangent space to $\H^2$ and $X=x_1e_1+x_2e_2$
a tangent vector. Then
$$
\omega_u^*(X)=d h^*(X+ \langle\nabla
u,X\rangle_{\H^2} \partial_t)=dh(N_u\wedge(X+\langle\nabla
u,X\rangle_{\H^2} \partial_t))
$$
where $N_u=(\nabla u-\partial t)/W$ (with $W=\sqrt{1+\|\nabla
u\|^2})$. If $\nabla u=u_1e_1+u_2e_2$ we have
$$
N_u\wedge(X+\langle\nabla
u,X\rangle_{\H^2} \partial_t)=\frac{u_2\langle\nabla
  u,X\rangle_{\H^2}+x_2}{W}e_1 -\frac{u_1\langle\nabla
  u,X\rangle_{\H^2}+x_1}{W}e_2+\frac{u_1x_2-u_2x_1}{W}\partial_t
$$
Thus
$$
\omega_u^*(X)=\frac{u_1x_2-u_2x_1}{W}=\langle\frac{{\nabla
u}^\perp}{\sqrt{1+\|\nabla u\|^2}},X\rangle_{\H^2}
$$

Let us now fix $\t\in\R$. Recall that $(\phi_l)_{l\in(-1,1)}$ is the
flow of the Killing vector field $Y_\t$. Let $D$ be a domain in the
vertical geodesic plane $\gamma_{\t+\pi/2}\times\R$ (this plane is
orthogonal to $\gamma_\theta$, viewed as a geodesic of $\{t=0\}$). Let
$v$ be a function on $D$ with values in $(-1,1)$. Then, the surface
$\{\phi_{v(p)}(p)\ |\ p\in D\}$ is called a $Y_\t$-graph. It is a
graph with respect to the Killing vector field $Y_\t$ in the sense
that it meets each orbit of $Y_\t$ in at most one point. If such a
surface is minimal, it is called a minimal $Y_\t$-graph.  Let $v'$ be
a second function defined on a domain of
$\gamma_{\t+\pi/2}\times\R$. If $v'\ge v$ on the intersection of their
domains of definition, we say that the $Y_\t$-graph of $v'$ lies on
the positive $Y_\t$-side of the $Y_\t$-graph of $v$.

The same notion can be defined for the vector field $Z_\t$. If $D$ is
a domain in the vertical geodesic plane $\gamma_{\t+\pi/2}\times\R$
and $v$ is a function on $D$ with values in $\R$, the surface
$\{\psi_{v(p)}(p)\ |\ p\in D\}$ is called a $Z_\t$-graph
($(\psi_s)_{s\in\R}$ is the flow of $Z_\t$). This surface is a graph
with respect to $Z_\t$ since it meets each orbit of $Z_\t$ in at most
one point.


\section{The Alexandrov problem for doubly periodic constant mean
  curvature surfaces}
\label{sec:alexandrov}

Let $(\phi_l)_{l\in(-1,1)}$ be the flow of $Y_0$ and consider $G$ the
$\Z^2$ subgroup of $\Isom(\H^2\times\R)$ generated by $\phi_l$ and
$T(h)$, for some positive $l$ and $h$.  We denote by $\M$ the quotient
of $\H^2\times\R$ by $G$. The manifold $\M$ is diffeomorphic to
$\T^2\times\R$. Moreover, $\M$ is foliated by the family of tori
$\T(s)=(d(s)\times\R)/G$, $s\in\R$ (we recall that $d(s)$ is an
equidistant to $\g_0$). All the $\T(s)$ are intrinsically flat and
have constant mean curvature $\tanh(s)/2$; $\T(0)$ is totally
geodesic.

In this section, we study compact embedded constant mean curvature
surfaces in $\M$. The tori $T(s)$ are examples of such surfaces when
$0\leq H<1/2$.

First, let us observe what happens in $(\H^2\times\R)/G'$, where $G'$
is the subgroup generated by $T(h)$. This quotient is isometric to
$\H^2\times\S^1$. Let $\Sigma$ be a compact embedded constant mean
curvature $H$ surface in $\H^2 \times\S^1$.  The surface $\Sigma$
separates $\H^2\times\S^1$.  Indeed, if it is not the case, the exists
a smooth jordan curve whose intersection number with $\Sigma$ is $1$
modulo $2$. In $\H^2\times\S^1$, this jordan curve can be moved such
that it does not intersect $\Sigma$ any more, which is impossible
since the intersection number modulo $2$ is invariant by homotopy.

Now, we consider $\gamma$ a geodesic in $\H^2$ and $(\ell_s)_{s\in\R}$
the family of geodesics in $\H^2$ orthogonal to $\gamma$ that foliates
$\H^2$. By the maximum principle using the vertical annuli
$\ell_s\times \S^1$, we get that $H>0$, since $\Sigma$ is compact. We
can apply the standard Alexandrov reflection technique with respect to
the family $(\ell_s\times \S^1)_{s\in\R}$. We obtain that $\Sigma$ is
symmetric with respect to some $\ell_{s_0}\times \S^1$. Doing this for
every $\gamma$, one proves that $\Sigma$ is a rotational surface
around a vertical axis $\{p\}\times\S^1$ ($p\in\H^2$). $\Sigma$ is
then either a constant mean curvature sphere coming from the spheres
of $\H^2\times\R$ or the quotient by $G'$ of a vertical cylinder or
unduloid of axis $\{p\}\times\R$. This proves that, necessarily,
$H>1/2$. These surfaces are the only ones in $\H^2\times\S^1$ which
have a compact projection on $\H^2$. In $\H^2\times\R$, determining
which properly embedded CMC surfaces have a compact projection on
$\H^2$ (\textit{i.e.} is included in a vertical cylinder) is an open
question.

The spheres, the cylinders and the unduloids can also be quotiented by
$G$, if they are well placed in $\H^2\times\R$ with respect to
$\g_0\times\R$.  They give examples of compact embedded CMC surfaces
in $\M$ for $H>1/2$.

We remark that the vector field $Z_{\pi/2}$ is invariant by the group
$G$, so it is well defined in $\M$. Moreover its integral curves are
the geodesics orthogonal to $\T(0)$. This implies that the notion of
$Z_{\pi/2}$ graph is well defined in $\M$. We have the following
answer to the Alexandrov problem in $\M$.

\begin{theorem}\label{thm:alexandrov}
  Let $\Sigma\subset \M$ be a compact constant mean curvature embedded
  surface. Then, $\Sigma$ is either:
  \begin{enumerate}
  \item a torus $\T(s)$, for some $s$; or
  \item a ``rotational" sphere; or
  \item the quotient of a vertical unduloid (in particular, a vertical
    cylinder over a circle); or
  \item a $Z_{\pi/2}$-bigraph with respect to $\T(0)$.
  \end{enumerate}
  Moreover, if $\Sigma$ is minimal, then $\Sigma=\T(0)$.
\end{theorem}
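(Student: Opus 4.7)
The plan is to combine the maximum principle with two families of Alexandrov-type reflection isometries that descend to $\M$: the reflection $\sigma_0$ across the totally geodesic torus $\T(0)$, and the reflections $\sigma_\mu$ across the vertical geodesic planes $g(\mu)\times\R$ (these descend because $\sigma_\mu$ conjugates $\phi_l$ to $\phi_l^{-1}$ and commutes with $T(h)$). The CMC foliation $\{\T(s)\}_{s\in\R}$ will serve as barriers. Setting $s_+=\max_\Sigma s$ and $s_-=\min_\Sigma s$ with tangency points $p_\pm\in\Sigma$ on $\T(s_\pm)$, the comparison principle at $p_\pm$ (together with $H_{\T(s)}=\tanh(s)/2$) produces inequalities between $H_\Sigma$ and $\tanh(s_\pm)/2$ whose form depends on whether the unit normal of $\Sigma$ at $p_+$ and $p_-$ points to the same or opposite sides of the foliation (torus-like versus sphere-like configuration). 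In the torus-like case, the two inequalities collapse to $s_+=s_-$, forcing $\Sigma=\T(s)$ (case (1)); in the minimal subcase they further force $s_\pm=0$, giving $\Sigma=\T(0)$.

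The sphere-like case splits according to the horizontal projection of a lift $\widetilde\Sigma\subset\H^2\times\R$. If $\pi(\widetilde\Sigma)$ is bounded in $\H^2$, then a connected component $\Sigma_0$ of $\widetilde\Sigma$ is a properly embedded CMC surface with compact, possibly $T(h)$-periodic, horizontal projection. Applying the Alexandrov reflection argument in $\H^2\times\S^1$ with vertical totally geodesic planes $\ell_s\times\S^1$ in every horizontal direction (as sketched just before the theorem) forces rotational symmetry of $\Sigma_0$ about a vertical axis; hence $\Sigma$ is a sphere, cylinder, or unduloid, giving cases (2) and (3). If instead $\pi(\widetilde\Sigma)$ is unbounded along $\gamma_0$, then $\Sigma_0$ is $\phi_l^k$-invariant for some $k\ge 1$; in the intermediate cover $\M'=\H^2\times\S^1$, I would slide the totally geodesic planes $g(\mu)\times\S^1$ inward from $|\mu|\to 1$ and run the Alexandrov moving-plane reflection. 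Combining the resulting reflection symmetries with the sphere-like folding at $p_\pm$ forces $\widetilde\Sigma$ to be $\sigma_0$-symmetric and each half of $\Sigma$ with respect to $\T(0)$ to be a $Z_{\pi/2}$-graph over a domain of $\T(0)$, yielding case (4).

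The main obstacle I anticipate is this last step. The moving-plane technique must be executed on a noncompact $\phi_l^k$-invariant surface in $\M'$, where the planes $g(\mu)\times\S^1$ do foliate $\M'$ but the reflected caps are unbounded along $\gamma_0$; controlling the first contact in this setting and promoting the resulting reflection symmetries to the full $Z_{\pi/2}$-bigraph structure (rather than only isolated planar symmetries) is the delicate point. By contrast, the torus case (1) and the rotational sphere/unduloid cases (2)--(3) reduce to the standard comparison and Alexandrov arguments in $\H^2\times\R$ and $\H^2\times\S^1$ already recalled in the preliminary discussion.
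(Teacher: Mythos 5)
Your barrier argument with the foliation $\T(s)$ is sound, and it even yields the ``moreover'' statement directly: at the highest and lowest tangency points the comparison with $\T(s_\pm)$ forces $s_+\le 0\le s_-$ for a minimal surface, hence $\Sigma=\T(0)$; likewise the bounded-projection case correctly reduces to the $\H^2\times\S^1$ rotational analysis recalled before the theorem. The genuine gap is exactly at the step you flag, and it is not a technicality: the moving-plane scheme you propose for the unbounded case cannot even be started. A noncompact lift $\widetilde\Sigma\subset\H^2\times\S^1$ of a compact surface of $\M$ is invariant under a power of $\phi_l$, hence unbounded in both directions along $\g_0$ while remaining at bounded distance from $\g_0\times\S^1$. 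Consequently \emph{every} plane $g(\mu)\times\S^1$, $\mu\in(-1,1)$, meets $\widetilde\Sigma$, so there is no position near $|\mu|\to1$ from which to slide these planes inward; moreover both components of the complement of $g(\mu)\times\S^1$ contain noncompact pieces of $\widetilde\Sigma$, so there is no compact reflected cap and no first-contact/interior-tangency argument to run. Sliding this family along $\g_0$ could at best detect a symmetry in the direction of the period, which is not what case 4 asks for.

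The paper resolves precisely this point by a different sweeping family: for a fixed geodesic $\g$ orthogonal to $\g_0\times\S^1$ (an integral curve of $Z_{\pi/2}$), it uses the totally geodesic vertical annuli $P(s)$ normal to $\g$ and tangent to $d(s)\times\S^1$. Since $\widetilde\Sigma$ stays at bounded distance from $\g_0\times\S^1$ while $P(s)$, $s\neq0$, diverges from $\g_0\times\S^1$ away from $\g$, the intersection $P(s)\cap\widetilde\Sigma$ is compact and empty for $|s|$ large, so one can slide $s$ from $+\infty$ down to $0$. The first touching point at $s_0>0$ rules out minimal $\widetilde\Sigma$ (it would equal $P(s_0)$, whose quotient in $\M$ is noncompact) and fixes the direction of the mean curvature vector; the Alexandrov procedure with the $P(s)$ reaches $s=0$ without a reflected first contact (a contact at some $s_1>0$ would force $\widetilde\Sigma$ to be compact), showing that $\widetilde\Sigma\cap(\{x\ge0\}\times\S^1)$ is a Killing graph along $\g$, and varying $\g$ gives the $Z_{\pi/2}$-graph property on each side of $\g_0\times\S^1$. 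From this one concludes either case 1 (when $\widetilde\Sigma\subset\{x\ge0\}\times\S^1$, by your same mean-curvature comparison with the equidistant tori) or case 4 (the symmetry occurring exactly at $s=0$). Without this choice of reflection family --- or some substitute producing compact caps transverse to $Z_{\pi/2}$ --- your outline does not deliver the bigraph structure, so the proposal as written is incomplete in the main case.
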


The first thing we have to remark is that the last item can occur. Let
$\g$ be a compact geodesic in the totally geodesic torus $\T(0)$. From
a result by R.~Mazzeo and F.~Pacard \cite{MaPa}, we know that there
exist embedded constant mean curvature tubes that partially foliate a
tubular neighborhood of $\g$. So if $\g$ is not vertical, these
constant mean curvature surfaces can not be of one of the three first
type. If fact, these surfaces can be also directly derived
from~\cite{SaE} (see also~\cite{Onn}).  They have mean curvature
larger than $1/2$.

The second remark is that we do not know if there exist constant mean
curvature $1/2$ examples. If they exist, they are of the fourth type.

Very recently, J.M. Manzano and F. Torralbo~\cite{MaTo} construct, for
each value of $H>1/2$, a 1-parameter family of ``horizontal
unduloidal-type surfaces'' in $\H^2\times\R$ of bounded height which
are invariant by a fixed $\phi_l$. They conjecture that all these
examples are embedded.  The limit surfaces in the boundary of this
family are a rotational sphere and a horizontal cylinder.

\begin{proof}
  Let $\Sigma$ be a compact embedded constant mean curvature surface
  in $\M$ and consider a connected component $\widetilde\Sigma$ of its
  lift to $\H^2\times\S^1$. If $\widetilde\Sigma$ is compact, the
  above study proves that we are then in cases \textit{2} or
  \textit{3}.  We then assume that $\widetilde\Sigma$ is not compact.
  Even if $\widetilde\Sigma$ is not compact, the same argument as
  above proves that it separates $\H^2\times\S^1$ into two connected
  components. We also assume that $\widetilde\Sigma\neq\g_0\times\S^1$
  (otherwise we are in Case \textit{1}). Then, up to a reflection
  symmetry with respect to $\g_0\times\S^1$, we can assume that
  $\widetilde\Sigma\cap (\{x\ge0\}\times\S^1)$ is non empty.

  Let $\g$ be an integral curve of $Z_{\pi/2}$, \textit{i.e.} a
  geodesic orthogonal to $\g_0\times\S^1$. We denote by $P(s)$ the
  totally geodesic vertical annulus of $\H^2\times\S^1$ which is
  normal to $\g$ and tangent to $d(s)\times\S^1$. Since
  $\widetilde\Sigma$ is a lift of the compact surface $\Sigma$,
  $\widetilde\Sigma$ stays at a finite distance from
  $\g_0\times\S^1$. Far from $\g$, the distance from $P(s)$ to
  $\g_0\times\S^1=P(0)$ tends to $+\infty$, if $s\neq0$. Thus
  $P(s)\cap \widetilde\Sigma$ is compact for $s\neq 0$, and it is
  empty if $|s|$ is large enough. So start with $s$ close to $+\infty$
  and let $s$ decrease until a first contact point between
  $\widetilde\Sigma$ and $P(s)$, for $s=s_0>0$.  If $\widetilde\Sigma$
  is minimal, by the maximum principle we get
  $\widetilde\Sigma=P(s_0)$. But the quotient of $P(s_0)$ is not
  compact in $\M$. We then deduce that $\widetilde\Sigma$ is not
  minimal. This proves that the only compact embedded minimal surface
  in $\M$ is $\T(0)$.

  By the maximum principle, we know that the (non-zero) mean curvature
  vector of $\widetilde{\Sigma}$ does not point into $\cup_{s\ge
    s_0}P(s)$.  Let us continue decreasing $s$ and start the
  Alexandrov reflection procedure for $\widetilde\Sigma$ and the
  family of vertical totally geodesic annuli $P(s)$. Suppose there is
  a first contact point between the reflected part of
  $\widetilde\Sigma$ and $\widetilde\Sigma$, for some $s_1>0$. Then
  $\widetilde\Sigma$ is symmetric with respect to $P(s_1)$. Since
  $s_1>0$, then $\widetilde\Sigma\cap(\cup_{s_1\leq s\leq s_0}P(s))$
  is compact. We get that $\widetilde\Sigma$ is compact, a
  contradiction. Hence we can continue the Alexandrov reflection
  procedure until $s=0$ without a first contact point.  This implies
  that $\widetilde\Sigma \cap (\{x\ge0\}\times\S^1)$ is a Killing
  graph above $\g_0\times\S^1$, for the Killing vector field $Y$
  corresponding to translations along $\g$ (we notice that, along
  $\g$, $Y$ and $Z_{\pi/2}$ coincide).  Hence $\g$ has at most one
  intersection point $p$ with $\widetilde\Sigma \cap
  (\{x\ge0\}\times\S^1)$ and this intersection is transverse.

  Since at the first contact point between $\widetilde\Sigma$ and
  $P(s)$ (for $s=s_0$) the mean curvature vector of $\widetilde\Sigma$
  does not point into $\cup_{s\ge s_0}P(s)$, we have that, for any
  $s'\in(0,s_0]$, the mean curvature vector of $\widetilde\Sigma$ on
  $\widetilde\Sigma\cap P(s')$ does not point into $\cup_{s\ge
    s'}P(s)$. In particular, the mean curvature vector of
  $\widetilde\Sigma$ at $p$ points to the opposite direction as
  $\Z_{\pi/2}$.  Doing this for every geodesic $\g$ orthogonal to
  $\g_0\times\S^1$, we get that $\widetilde\Sigma \cap
  (\{x\ge0\}\times\S^1)$ is a $Z_{\pi/2}$ graph.

  Now let us suppose that $\widetilde\Sigma$ is included in
  $\{x\ge0\}\times\S^1$, and let $s_2\ge 0$ and $s_3>0$ be the minimum
  and the maximum of the distance from $\widetilde\Sigma$ to
  $\g_0\times\S^1$, respectively. Thus $\widetilde\Sigma$ is contained
  between $d(s_2)\times\S^1$ and $d(s_3)\times\S^1$. Because of the
  orientation of the mean curvature vector at the contact points of
  $\widetilde\Sigma$ with $d(s_2)\times\S^1$ and $d(s_3)\times\S^1$,
  we get
  \[
  H_{d(s_2)\times\S^1}\ge H_{\widetilde\Sigma}\ge
  H_{d(s_3)\times\S^1}.
  \]
  But $H_{d(s_2)\times\S^1}\le H_{d(s_3)\times\S^1}$, hence $s_2=s_3$
  and $\widetilde\Sigma=d(s_2)\times\S^1$. This is, we are in Case
  \textit{1}.

  Then we assume that $\widetilde\Sigma\cap (\{x< 0\}\times\S^1)$ is
  non empty. Using the totally geodesic vertical annuli $P(s)$ for
  $s\le 0$, we prove as above that $\widetilde\Sigma \cap
  (\{x\le0\}\times\S^1)$ is a $Z_{\pi/2}$ graph. Moreover the mean
  curvature vector points in the same direction as $Z_{\pi/2}$. This
  implies that $\widetilde\Sigma$ is normal to $\g_0\times\S^1$. Thus,
  in the Alexandrov reflection procedure, a first contact point
  between the reflected part of $\widetilde\Sigma$ and
  $\widetilde\Sigma$ occurs for $s=0$. $\widetilde\Sigma$ is then
  symmetric with respect to $P(0)=\g_0\times\S^1$: we are in Case
  \textit{4}.
\end{proof}


\section{Minimal surfaces invariant by a $\Z$
  subgroup}\label{sec:singly}

In this section, we are interested in constructing minimal surfaces
which are invariant by a $\Z$ subgroup of $\Isom(\H^2\times\R)$.  At
this time, only few non-trivial singly periodic examples are known:
There are examples invariant by a vertical
translation~\cite{NeRo,Hau,MoRo}; invariant by a vertical screw
motion~\cite{NeRo,SaTo1}; invariant by a horizontal hyperbolic
translation~\cite{SaTo2,MaRoRo,Pyo}; or invariant by a horizontal
parabolic or a hyperbolic screw motion~\cite{SaE,Onn}.

The subgroups we consider are those generated by a translation
$\phi_l$ along a horizontal geodesic or by a vertical translation
$T(h)$ along $\partial_t$. The surfaces we construct are similar to
Scherk's singly periodic minimal surfaces and Karcher's Saddle Towers
of $\R^3$.


\subsection{Horizontal singly periodic Scherk minimal surfaces}
\label{subsec:horizontal}

In this subsection we construct a 1-parameter family of minimal
surfaces in $\H^2\times\R$, called "horizontal singly periodic Scherk
minimal surfaces".  Each of these surfaces can be seen as the
desingularization of the intersection of a vertical geodesic plane and
the horizontal slice $\H^2\times\{0\}$, and it is invariant by a
horizontal hyperbolic translation along the geodesic of intersection.

We fix $\mu\in(0,1)$ and define $q_\mu=(0,\mu)$ and
$q_{-\mu}=(0,-\mu)$.  Given $R>0$, we denote by $\Ome(R)$ the compact
domain in $\{x\ge 0\}$ between $E(R)$ and the geodesic lines $g(\mu)$,
$g(-\mu)$, $\g_0$, where $E(R)$ is the arc contained in the
equidistant line $d(R)$ which goes from $g(\mu)$ to $g(-\mu)$, see
Figure~\ref{fig:omega}. Let $u_R$ be the solution to \eqref{mse} over
$\Ome(R)$ with boundary values zero on $\partial\Ome(R)\setminus \g_0$
and value $R$ on $\overline{q_\mu q_{-\mu}}$ (minus its endpoints).
By the maximum principle, $u_{R'}>u_R$ on $\Omega_R$, for any $R'>R$.

\begin{figure}
  \begin{center}
    \includegraphics[width=0.5\textwidth]{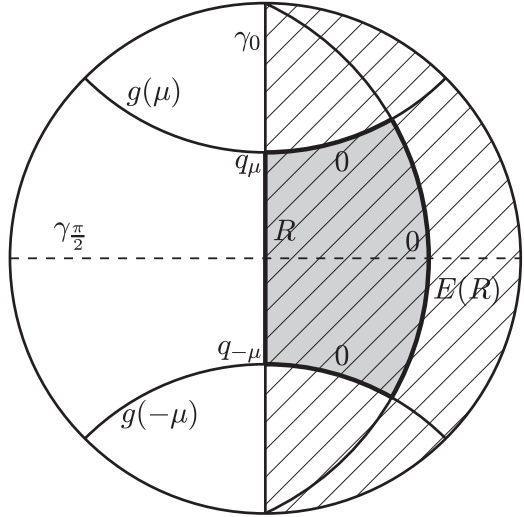}
  \end{center}
  \caption{The shadowed domain is $\Ome(R)$, with the prescribed
    boundary data. The ruled region corresponds to $D^+$.}
  \label{fig:omega}
\end{figure}

Let us denote $D^+=\{x\ge 0\}$ the hyperbolic halfplane bounded by
$\g_0$. On $D^+$, we consider the solution $v$ of \eqref{mse}
discovered by U.~Abresch and R. Sa Earp, which takes value $+\infty$
on $\g_0$ and $0$ on the asymptotic boundary $\partial_\infty D^+$
(see Appendix~\ref{app:barrier}).  Such a $v$ is a barrier from above
for our construction, since we have $u_R\le v$ for any $R$.

Since $(u_R)_R$ is a monotone increasing family bounded from above by
$v$, we get that $u_R$ converges as $R\rightarrow +\infty$ to a
solution $u$ of \eqref{mse} on $\Ome(\infty)=\cup_{R>0}\Ome(R)$, with
boundary values $+\infty$ over $\overline{q_\mu q_{-\mu}}$ (minus its
endpoints) and $0$ over the remaining boundary (including the
asymptotic boundary $E(\infty)$ at infinity).  In fact, this solution
$u$, which is unique, can be directly derived from Theorem~4.9
in~\cite{MaRoRo}.

\begin{figure}
  \begin{center}
    \includegraphics[width=0.57\textwidth]{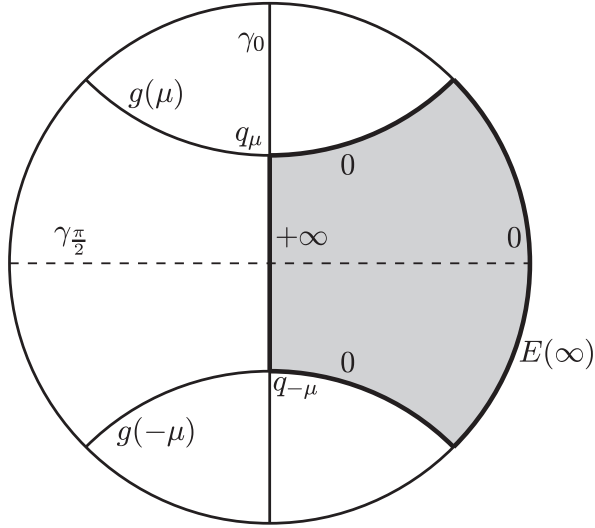}
  \end{center}
  \caption{The domain $\Ome(\infty)$ with the prescribed boundary
    data.}
  \label{fig:omega_inf}
\end{figure}

Let $\Sigma_R$ be the minimal graph of $u_R$.  $\Sigma_R$ is in fact
the solution to a Plateau problem in $\H^2\times\R$ whose boundary is
composed of horizontal and vertical geodesic arcs and the arc
$E(R)\times\{0\}$.  Let $\phi_l$ denote the flow of $Y_0$. Using the
foliation of $\H^2\times\R$ by the vertical planes
$\phi_l(\gamma_{\pi/2}\times\R)=g(l)\times\R$, $l\in(-1,1)$, the
Alexandrov reflection technique proves that $\Sigma_R$ is a
$Y_0$-bigraph with respect to $\g_{\pi/2}\times\R$. So
$\Sigma_R^+=\Sigma_R\cap\{y\ge 0\}$ is a $Y_0$-graph. Thus, the same
is true for the minimal graph $\Sigma$ of $u$ and for
$\Sigma^+=\Sigma\cap\{y\ge 0\}$.

The boundary of $\Sigma$ is composed of the vertical half-lines
$\{q_\mu\} \times\R^+$, $\{q_{-\mu}\}\times\R^+$ and the two halves
$g^+(\mu),g^+(-\mu)$ of the horizontal geodesics $g(\mu),g(-\mu)$.
The expected ``horizontal singly periodic Scherk minimal surface'' is
obtained by rotating recursively $\Sigma$ an angle $\pi$ about the
vertical and horizontal geodesics in its boundary.  This ``horizontal
singly periodic Scherk minimal surface'' is properly embedded,
invariant by the horizontal translation $\phi_\mu^4$ along $\g_0$ and,
far from $\g_0\times\{0\}$, it looks like $(\g_0\times\R)\cup
\{t=0\}$.

\begin{proposition}\label{prop:Scherk1p}
  For any $\mu\in(0,1)$, there exists a properly embedded minimal
  surface ${\cal M}_\mu$ in $\H^2\times\R$ invariant by the horizontal
  hyperbolic translation $\phi_\mu^4$ along $\g_0$, that we call {\em
    horizontal singly periodic Scherk minimal surface}. In the
  quotient by $\phi_\mu^4$, ${\cal M}_\mu$ is topologically a sphere
  minus four points corresponding its ends: it has one top end
  asymptotic to $(\g_0\times\R^+)/\phi_\mu^4$, one bottom end
  asymptotic to $(\g_0\times\R^-)/\phi_\mu^4$, one left end asymptotic
  to $\{t=0,x<0\}/\phi_\mu^4$, and one right end asymptotic to
  $\{t=0,x>0\}/\phi_\mu^4$. Moreover, ${\cal M}_\mu/\phi_\mu^4$
  contains the vertical lines $\{q_{\pm\mu}\}\times\R$ and the
  horizontal geodesics $g(\pm\mu)\times\{0\}$, and it is invariant by
  reflection symmetry with respect to the vertical geodesic plane
  $\g_{\pi/2}\times\R$.
\end{proposition}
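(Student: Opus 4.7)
The plan is to construct ${\cal M}_\mu$ by extending the fundamental piece $\Sigma$ (the graph of $u$ on $\Omega(\infty)$) through iterated Schwarz reflection about the four geodesic arcs forming its finite boundary: the two vertical half-lines $\{q_{\pm\mu}\}\times\R^+$ (along which $u=+\infty$) and the two horizontal half-geodesics $g^+(\pm\mu)\times\{0\}$ (along which $u=0$). Along each of these arcs $\Sigma$ is smooth up to the boundary, so its $\pi$-rotation about the arc provides an analytic minimal extension across it. Applying all four reflections and iterating produces a complete minimal surface ${\cal M}_\mu$ that contains the full vertical lines $\{q_{\pm\mu}\}\times\R$ and the full horizontal geodesics $g(\pm\mu)\times\{0\}$.

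For the invariance under $\phi_\mu^4$, I would compose two of the generating rotations. Let $R_{\pm\mu}$ denote the $\pi$-rotation of $\H^2\times\R$ about $\{q_{\pm\mu}\}\times\R$; on each horizontal slice, $R_\mu\circ R_{-\mu}$ acts as the composition of two $\pi$-rotations of $\H^2$ about $q_\mu$ and $q_{-\mu}\in\g_0$. In $\H^2$, the composition of two point rotations by $\pi$ with centers on a geodesic $\g_0$ is the hyperbolic translation along $\g_0$ by twice the distance between the centers, here $2d(q_{-\mu},q_\mu)=4d(\0,q_\mu)$; this is exactly $\phi_\mu^4$. Since $R_\mu$ and $R_{-\mu}$ both belong to the symmetry group of ${\cal M}_\mu$, so does their composition. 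The reflection symmetry across $\g_{\pi/2}\times\R$ is inherited from the $Y_0$-bigraph property of $\Sigma$ established just before the proposition.

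For the end description in ${\cal M}_\mu/\phi_\mu^4$, I would read off the asymptotic behavior directly from the boundary values of $u$. On the asymptotic arc $E(\infty)$, $u=0$, so $\Sigma$ tends to the horizontal half-slice $\{t=0,x>0\}$ at infinity on the right; after the Schwarz reflections across $g^+(\pm\mu)\times\{0\}$ and passage to the quotient by $\phi_\mu^4$, this yields the right end asymptotic to $\{t=0,x>0\}/\phi_\mu^4$ and its mirror the left end. Along $\overline{q_\mu q_{-\mu}}$, $u=+\infty$, and the Abresch--Sa~Earp barrier $v$ from above confines $\Sigma$ to a bounded horizontal neighborhood of $\g_0\times\R$; after the vertical-axis Schwarz reflections and passing to the quotient, the halves $\{t\ge 0\}$ and $\{t\le 0\}$ yield the top and bottom ends asymptotic to $(\g_0\times\R^\pm)/\phi_\mu^4$. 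A count of reflected tiles in a fundamental domain of $\phi_\mu^4$ then shows that ${\cal M}_\mu/\phi_\mu^4$ is topologically a sphere with exactly these four punctures.

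I expect the main difficulty to be the verification of global embeddedness of ${\cal M}_\mu$: while each reflected tile is individually a graph over a reflected copy of $\Omega(\infty)$ and adjacent tiles meet only along their common boundary geodesic, one must rule out that two tiles generated by different chains of reflections ever coincide or cross transversally. This reduces to the consistency of the planar reflection group generated by reflections about $g(\mu), g(-\mu)$ and $\g_0$ acting on $\H^2$, which is guaranteed by the fact that the boundary arcs of $\Omega(\infty)$ meet pairwise at right angles at $q_{\pm\mu}$ and meet the remaining boundary at infinity; the corresponding tessellation of $\H^2$ by the images of $\Omega(\infty)$ is then free of overlaps, and properness of ${\cal M}_\mu$ follows by combining this with the local graph property.
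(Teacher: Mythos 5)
Your proposal follows essentially the same route as the paper: the paper's argument is exactly the construction preceding the statement --- the monotone limit graph $u$ on $\Omega(\infty)$ controlled from above by the Abresch--Sa~Earp barrier, whose graph $\Sigma$ is extended by $\pi$-rotations about the vertical half-lines $\{q_{\pm\mu}\}\times\R^+$ and the horizontal geodesics $g^+(\pm\mu)\times\{0\}$, with invariance under $\phi_\mu^4$ read off as the composition of the two vertical-axis rotations and the asymptotics read off from the boundary data and the barrier. Your embeddedness discussion via the tessellation generated by the reflections in $\g_0$, $g(\mu)$, $g(-\mu)$ is actually more detailed than the paper (which merely asserts proper embeddedness); the one point you leave implicit is that no word in the generating rotations can act as $(p,t)\mapsto(p,-t)$ over a fixed tile, which follows from the parity of orientation-reversing (reflection) letters in any word whose horizontal part is the identity.
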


\begin{remark}
  \textbf{``Generalized horizontal singly periodic Scherk minimal
    surfaces''.}

  Consider the domain $\Omega(\infty)$ with prescribed boundary data
  $+\infty$ on $\overline{q_\mu q_{-\mu}}$, $0$ on $g^+(\mu)\cup
  g^+(-\mu)$ and a continuous function $f$ on the asymptotic boundary
  $E(\infty)$ of $\Omega(\infty)$ at infinity. By Theorem~4.9
  in~\cite{MaRoRo}, we know there exists a (unique) solution to this
  Dirichlet problem associated to equation~\eqref{mse}.

  By rotating recursively such a graph surface an angle $\pi$ about
  the vertical and horizontal geodesics in its boundary, we get a
  ``generalized horizontal singly periodic Scherk minimal surface''
  ${\cal M}_\mu(f)$, which is properly embedded and invariant by the
  horizontal translation $\phi_\mu^4$ along $\g_0$.  Such a ${\cal
    M}_\mu(f)$ can be seen as the desingularization of the vertical
  geodesic plane $\g_0\times\R$ and a periodic minimal entire graph
  invariant by the horizontal translation $\phi_\mu^4$ along~$\g_0$.
  Moreover, the surface ${\cal M}_\mu(f)$ contains the vertical lines
  $\{q_{\pm\mu}\}\times\R$ and the horizontal geodesics
  $g(\pm\mu)\times\{0\}$.

  In general, ${\cal M}_\mu(f)$ contains vertical geodesic arcs at the
  infinite boundary $\partial_\infty\H^2\times\R$ (over the endpoints
  of $g(\pm\mu)$ and their translated copies). To avoid such vertical
  segments, we take $f$ vanishing on the endpoints of $E(\infty)$.
\end{remark}


\subsection{A Plateau construction of vertical Saddle
  Towers}\label{subsec:vertical}

In this section, we construct the 1-parameter family of most symmetric
vertical Saddle Towers in $\H^2\times\R$, which can be seen as the
desingularization of $n$ vertical planes meeting at a common axis with
angle $\t=\pi/n$, for some $n\ge 2$. When $n=2$, the corresponding
examples are usually called ``vertical singly periodic Scherk minimal
surfaces''. For any fixed $n\geq 2$, these examples are included in
the $(2n-3)$-parameter family of vertical Saddle Towers constructed by
Morabito and the second author in~\cite{MoRo}. These surfaces are all
invariant by a vertical translation $T(h)$.

A fundamental piece of the Saddle Tower we want to construct is
obtained by solving a Plateau problem. We now consider a more general
Plateau problem, that will be also used in Sections~\ref{sec:2per}
and~\ref{sec:isomH2}.

Given an integer $n\ge 2$, we fix $\t=\pi/n$.  We consider in $\H^2$
the points
\[
p_\l=(\l\sin\t,\l\cos\t)\quad \mbox{ and }\quad q_\mu=(0,\mu),
\]
for any $\l\in(0,1]$ and any $\mu\in(0,1]$ (see
Figure~\ref{fig:plateau1}). Given $h>0$, we call
$W_{h,\l,\mu}\subset\H^2\times\R$ the triangular prism whose top and
bottom faces are two geodesic triangular domains at heights $0$ and
$h$: the bottom triangle has vertices $(p_\l,0),(\0,0),(q_{\mu},0)$
and the top triangle is its vertical translation to height $h$.

\begin{figure}
  \begin{center}
    \includegraphics[width=0.5\textwidth]{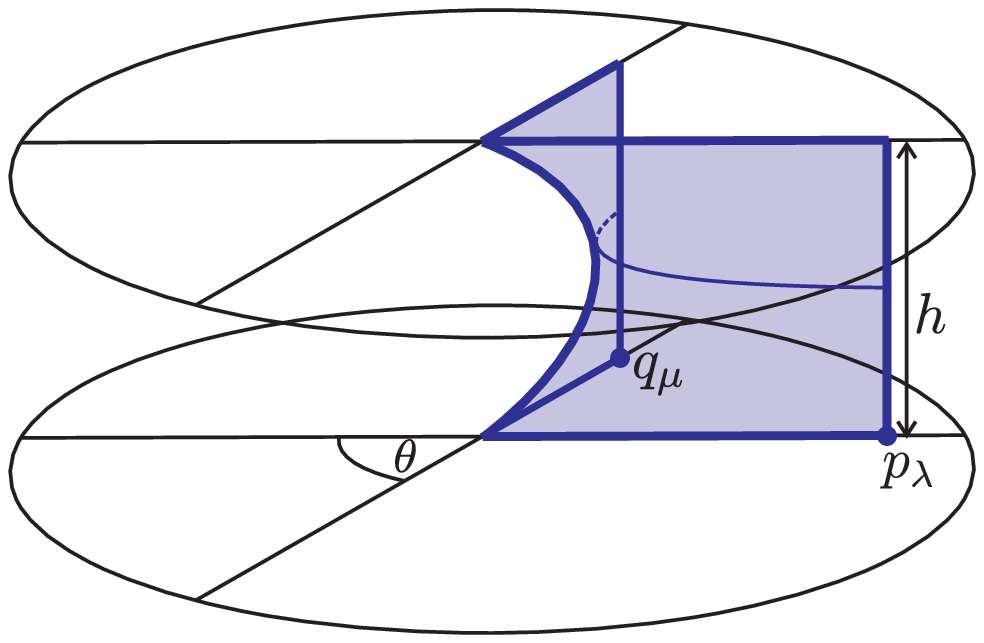}
  \end{center}
  \caption{The embedded minimal disk $\Sigma_{h,\l,\mu}$ bounded by
    $\G_{h,\l,\mu}$.}
  \label{fig:plateau1}
\end{figure}

If $\l<1$ and $\mu<1$, we consider the following Jordan curve in the
boundary of $W_{h,\l,\mu}$:

\[
\begin{array}{ll}
  \G_{h,\l,\mu} = & \overline{(q_{\mu},0)\,(\0,0)}\cup
  \overline{(\0,0)\,(p_\l,0)}\cup
  \overline{(p_\l,0)\,(p_\l,h)} \\
  & \cup
  \overline{(p_\l,h)\,(\0,h)}\cup
  \overline{(\0,h)\,(q_{\mu},h)}\cup
  \overline{(q_{\mu},h)\,(q_{\mu},0)}
\end{array}
\]
(see Figure~\ref{fig:plateau1}). Since $\partial W_{h,\l,\mu}$ is
mean-convex and $\G_{h,\l,\mu}$ is contractible in $W_{h,\l,\mu}$,
there exists an embedded minimal disk $\Sigma_{h,\l,\mu}\subset
W_{h,\l,\mu}$ whose boundary is $\G_{h,\l,\mu}$ (see Meeks and
Yau~\cite{MeYa}).

\begin{claim}\label{cl:uniqueness}
  $\Sigma_{h,\l,\mu}$ is the only compact minimal surface in
  $\H^2\times\R$ bounded by $\G_{h,\l,\mu}$. Moreover,
  $\Sigma_{h,\l,\mu}$ is a minimal $Y_{\t/2}$-graph and it lies on the
  positive $Y_{\t/2}$-side of $\Sigma_{h,\l',\mu'}$, for any
  $\l'\le\l$ and any $\mu'\le\mu$.
\end{claim}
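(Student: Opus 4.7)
The plan is to establish, in order: (i) any compact minimal surface bounded by $\G_{h,\l,\mu}$ lies in $W_{h,\l,\mu}$, (ii) $\Sigma_{h,\l,\mu}$ is a $Y_{\t/2}$-graph, (iii) uniqueness, and (iv) the monotonicity in $\l,\mu$.

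For (i), the boundary $\partial W_{h,\l,\mu}$ is piecewise totally geodesic---two triangular caps in $\{t=0\}$ and $\{t=h\}$, plus three vertical totally geodesic rectangles over the geodesic sides of the base triangle $(\0,p_\l,q_\mu)$---hence mean-convex with respect to the inward normal. The maximum principle then confines any compact minimal surface with boundary $\G_{h,\l,\mu}\subset\partial W_{h,\l,\mu}$ to $W_{h,\l,\mu}$. In particular $\Sigma_{h,\l,\mu}\subset W_{h,\l,\mu}$.

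For (ii), I would apply Alexandrov reflection with respect to the foliation of $\H^2\times\R$ by the totally geodesic vertical planes $P_s$ orthogonal to $\g_{\t/2}$, parameterized by signed distance $s$ along $\g_{\t/2}$ from $\0$. The contour $\G_{h,\l,\mu}$ projects onto $\g_{\t/2}$ into an interval $[0,s_{\max}]$, with $s=0$ attained uniquely at $\0$ (since $\g_{\t/2}$ bisects the angle $\t$ at $\0$, both edges $\overline{\0 p_\l}\subset\g_\t$ and $\overline{\0 q_\mu}\subset\g_0$ have strictly increasing $s$-coordinate away from $\0$). Starting from $s>s_{\max}$ and decreasing $s$, I reflect $\Sigma_{h,\l,\mu}\cap\{s'\ge s\}$ across $P_s$; the reflected piece stays in $W_{h,\l,\mu}\cap\{s'\le s\}$, and the maximum principle forbids any interior first contact with $\Sigma_{h,\l,\mu}\cap\{s'\le s\}$ unless $\Sigma_{h,\l,\mu}$ is globally symmetric across $P_s$. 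Such symmetry would force $\G_{h,\l,\mu}$ to be symmetric across $P_s$, which is impossible for any $s>0$ because $\0$ is the unique $s$-minimum of the contour. The reflection therefore proceeds all the way to $s=0$, showing that each orbit of $Y_{\t/2}$ meets $\Sigma_{h,\l,\mu}$ in at most one point, i.e., $\Sigma_{h,\l,\mu}$ is a $Y_{\t/2}$-graph.

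Uniqueness follows because the same Alexandrov argument applies to any competitor $\Sigma'$ bounded by $\G_{h,\l,\mu}$ (also contained in $W_{h,\l,\mu}$ by (i)), making $\Sigma'$ a $Y_{\t/2}$-graph over the same base domain (the projection of $\G_{h,\l,\mu}$ along $Y_{\t/2}$-orbits into $\g_{\t/2+\pi/2}\times\R$) with the same boundary values; the maximum principle for the quasilinear PDE governing minimal $Y_{\t/2}$-graphs then gives $\Sigma'=\Sigma_{h,\l,\mu}$. For (iv), if $\l'\le\l$ and $\mu'\le\mu$ then $\G_{h,\l',\mu'}$ lies on the non-positive $Y_{\t/2}$-side of $\G_{h,\l,\mu}$ (they share $\0$ and the edges of $\G_{h,\l',\mu'}$ extend less far in the $Y_{\t/2}$-direction). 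Denoting by $\phi_t$ the flow of $Y_{\t/2}$, for $t>0$ large the translate $\phi_t(\Sigma_{h,\l',\mu'})$ is on the positive $Y_{\t/2}$-side of $\Sigma_{h,\l,\mu}$; decrease $t$ to $0$ and note that any first interior contact is forbidden by the maximum principle applied to two minimal surfaces that cannot locally agree without also sharing a boundary arc. At $t=0$ we conclude that $\Sigma_{h,\l',\mu'}$ lies on the non-positive $Y_{\t/2}$-side of $\Sigma_{h,\l,\mu}$.

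The main obstacle is step (ii): the asymmetry of $\G_{h,\l,\mu}$ (when $\l\ne\mu$) means that standard Alexandrov-symmetry arguments do not apply, and one must instead rule out interior contacts by a purely geometric argument, relying on the hyperbolic-convexity of the base triangle with $\g_{\t/2}$ as bisector and on the fact that $\0$ is its unique $s$-minimum, so that the reflected piece always remains inside $W_{h,\l,\mu}$ until the reflecting plane reaches $\0$.
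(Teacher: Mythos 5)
Your step (i) matches the paper, but the heart of your argument has genuine gaps. In step (ii), the key assertion that the reflection of $\Sigma_{h,\l,\mu}\cap\{s'\ge s\}$ across $P_s$ ``stays in $W_{h,\l,\mu}\cap\{s'\le s\}$'' is unjustified and is in fact false for the region it would naturally follow from: the wedge with apex $\0$ bounded by $\g_0$ and $\g_\t$ widens as one moves away from $\0$ (the distance from $\g_0$, resp. $\g_\t$, to the bisector $\g_{\t/2}$ is strictly increasing along $\g_0^+$, resp. $\g_\t^+$), so reflecting the far part of $W_{h,\l,\mu}$ across a plane orthogonal to $\g_{\t/2}$ sends it partly outside $W_{h,\l,\mu}$; in particular the reflected contour arcs lying on $\g_0\times\{0,h\}$ and $\g_\t\times\{0,h\}$ leave the prism. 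Without that containment your first-contact analysis is incomplete: besides interior tangency (which you rule out correctly via the asymmetry of $\G_{h,\l,\mu}$), a first contact can a priori occur where the reflected contour meets the near part of the surface, or where the near contour---for instance the vertical segment $\{q_\mu\}\times(0,h)$ once the moving plane has passed $q_\mu$, which happens before reaching $p_\l$ when $\l\ne\mu$---meets the interior of the reflected piece. These boundary contacts are exactly the delicate cases for moving planes with an asymmetric contour; you name them as ``the main obstacle'' but do not resolve them. A similar problem undermines step (iv): sliding $\Sigma_{h,\l',\mu'}$ in from the \emph{positive} $Y_{\t/2}$-side, the translated contour $\phi_t(\G_{h,\l',\mu'})$ with $t>0$ enters $W_{h,\l,\mu}$ (forward orbits of points of $\g_0^+\cup\g_\t^+$ move into the wedge), so the first contact may occur at a translated vertical boundary segment touching an interior point of $\Sigma_{h,\l,\mu}$; the interior maximum principle does not forbid a minimal surface-with-boundary touching another minimal surface at a single boundary point, transversally, from one side. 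Your uniqueness step (iii) also tacitly assumes that any competitor is a $Y_{\t/2}$-graph over the \emph{same} domain with the same boundary values, which is not proved.

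For comparison, the paper handles everything with one sliding argument run in the opposite direction, and the choice of direction is the whole point: after the confinement $\Sigma\subset W_{h,\l,\mu}$, $\Sigma'\subset W_{h,\l',\mu'}$, one translates $\Sigma'$ by the flow $\phi_l$ of $Y_{\t/2}$ with $l$ increasing from $-1$ to $0$. For $l<0$ the translated contour $\phi_l(\G_{h,\l',\mu'})$ is disjoint from $W_{h,\l,\mu}$ (backward orbits of points on the two apex sides leave the closed wedge and never return), so a first contact must be an interior tangency, which the maximum principle excludes. Taking $\Sigma'=\Sigma$ gives the $Y_{\t/2}$-graph property, taking two competitors with the same boundary gives uniqueness, and taking $\l'\le\l$, $\mu'\le\mu$ gives the positive-side statement---all from the same sweep. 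If you wish to keep the reflection scheme of (ii), you must supply the missing control of contour contacts, which essentially amounts to re-establishing the geometric facts that make the negative-side sliding work.
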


\begin{proof}
  Let $\Sigma,\Sigma'\subset\H^2\times\R$ be two compact minimal
  surfaces with $\partial\Sigma=\G_{h,\l,\mu}$ and
  $\partial\Sigma'=\G_{h,\l',\mu'}$, where $\l'\le \l$ and $\mu'\le
  \mu$. First observe that, by the convex hull property (or by the
  maximum principle using vertical geodesic planes and horizontal
  slices), $\Sigma\subset W_{h,\l,\mu}$ and $\Sigma'\subset
  W_{h,\l',\mu'}$.

  Let $(\phi_l)_{l\in(-1,1)}$ be the flow of $Y_{\t/2}$. For $l$ close
  to $-1$, $\phi_l(W_{h,\l',\mu'})\cap W_{h,\l,\mu}=\emptyset$ and,
  for $-1<l<0$, $\phi_l(\G_{h,\l',\mu'})$ and $W_{h,\l,\mu}$ do not
  intersect. So letting $l$ increase from $-1$ to $0$, we get by the
  maximum principle that $\phi_l(\Sigma')$ and $\Sigma$ do not
  intersect until $l=0$. When $\l=\l'$ and $\mu=\mu'$, this implies
  that $\Sigma=\Sigma'$ (hence $\Sigma = \Sigma_{h,\l,\mu}$) and it is
  a minimal $Y_{\t/2}$-graph.  Also this translation argument shows
  that $\Sigma$ lies on the positive $Y_{\t/2}$-side of $\Sigma'$ when
  $\l'<\l$ and $\mu'<\mu$.
\end{proof}

From Claim~\ref{cl:uniqueness}, we deduce the continuity of $\Sigma_
{h,\l,\mu}$ in the $\l$ and $\mu$ parameters. The surfaces $\Sigma_
{h,\l,\mu}$ will be used in Sections~\ref{sec:2per} and
\ref{sec:isomH2} for the construction of doubly periodic minimal
surfaces and surfaces invariant by a subgroup of $\Isom(\H^2)$. More
precisely, in the following subsection we construct surfaces from
$\Sigma_{h,\l,\mu}$ that we use in the sequel.

\medskip

Now we only consider the $\l=\mu$ case.  As $Y_{\t/2}$-graphs, the
surfaces $\Sigma_{h,\mu,\mu}$ form an increasing family in the $\mu$
parameter.  So if we construct a ``barrier from above'', we could
ensure the convergence of $\Sigma_{h,\mu,\mu}$ when $\mu\to 1$.

\begin{figure}
  \begin{center}
    \includegraphics[width=0.5\textwidth]{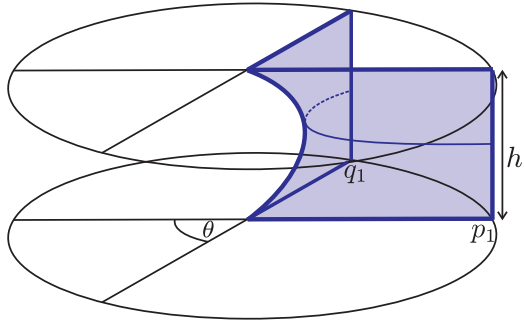}
  \end{center}
  \caption{The embedded minimal disk $\Sigma_{h}$ bounded by
    $\G_{h}$.}
  \label{fig:plateau2}
\end{figure}

On the ideal triangular domain of vertices $\0,p_1,q_1$, there exists
a solution $u$ to the vertical minimal graph equation~\eqref{mse}
which takes boundary values $0$ on $\overline{q_1\0}$ and
$\overline{\0 p_1}$ and $+\infty$ on $\overline{p_1 q_1}$. Let $S_0$
and $S_h$ be, respectively, the graph surfaces of $u$ and $h-u$.

Using the same argument as in Claim~\ref{cl:uniqueness}, we conclude
that both $S_0$ and $S_h$ are $Y_{\t/2}$-graphs and lie on the
positive $Y_{\t/2}$-side of $\Sigma_{h,\mu,\mu}$, for any $\mu$. They
are the expected ``barriers from above''.

Using the monotonicity and the barriers, we conclude that there exists
a limit $\Sigma_h$ of the minimal $Y_{\t/2}$-graphs
$\Sigma_{h,\mu,\mu}$ when $\mu\to 1$. And it is also a minimal
$Y_{\t/2}$-graph. The surface $\Sigma_h$ is a minimal disk bounded by
\[
\G_h =\overline{(q_1,0)(\0,0)} \cup \overline{(\0,0)(p_1,0)} \cup
\overline{(q_1,h)(\0,h)} \cup \overline{(\0,h)(p_1,h)}.
\]
In fact, applying the techniques of Claim~\ref{cl:uniqueness}, we get
that $\Sigma_h$ is the only minimal disk of $\H^2\times\R$ bounded by
$\G_h$ which is contained in $W_{h,1,1}$. By uniqueness, $\Sigma_h$ is
symmetric with respect to the vertical plane $\gamma_{\t/2}\times\R$
and the horizontal slice $\H^2\times\{h/2\}$.

Now we can extend $\Sigma_h$ by doing recursive symmetries along the
horizontal geodesics in its boundary. The surface we obtain is
properly embedded, invariant by the vertical translation $T(2h)$ and
asymptotic to the $n$ vertical planes $\g_{k\t}\times\R$, $0\leq k\leq
n-1$, outside of a large vertical cylinder with axis $\{\0\}\times\R$.

\begin{proposition}
  For any natural $n\geq 2$ and any $h>0$, there exists a properly
  embedded minimal surface ${\cal M}_h(n)$ in $\H^2\times\R$ invariant
  by the vertical translation $T(2h)$ and asymptotic to the $n$
  vertical planes $\g_{\frac{k\pi}{n}}\times\R$, for $0\leq k\leq
  n-1$, far from $\{\0\}\times\R$. Moreover, ${\cal M}_h(n)$ contains
  the horizontal geodesics $\g_{\frac{k\pi}{n}}\times\{0\}$, $0\leq
  k\leq n-1$, and is invariant by reflection symmetry with respect to
  the vertical geodesic planes $\g_{(\frac 1 2
    +k)\frac{\pi}{n}}\times\R$, with $0\leq k\leq n-1$, and respect to
  the horizontal slices $\H^2\times\{\pm h/2\}$. We call such a
  surface {\em (most symmetric) vertical Saddle Tower}.
\end{proposition}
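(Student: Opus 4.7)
The plan is to define ${\cal M}_h(n)$ as the orbit of the fundamental piece $\Sigma_h$ under the discrete group ${\cal G}$ of isometries of $\H^2\times\R$ generated by $\pi$-rotations about the horizontal geodesic arcs of $\partial\Sigma_h$. First I will verify that Schwarz reflection applies: each of the four arcs making up $\G_h$ lies in a horizontal slice and projects to a geodesic of $\H^2$, so it is an ambient geodesic of $\H^2\times\R$; hence $\pi$-rotation about it sends $\Sigma_h$ to an analytic minimal surface that matches $\Sigma_h$ smoothly along the arc.

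Next I will analyze ${\cal G}$ and show that it tiles $\H^2\times\R$ by isometric copies of the prism $W_{h,1,1}$. The $\pi$-rotation about a horizontal geodesic $\g\times\{t_0\}$ acts as the hyperbolic reflection across $\g\times\R$ composed with the vertical reflection $t\mapsto 2t_0-t$. Composing the rotations about $\g_0\times\{0\}$ and $\g_\t\times\{0\}$ therefore yields the rotation by angle $2\t=2\pi/n$ of $\H^2\times\R$ around $\{\0\}\times\R$, so those two generators alone produce a dihedral group of order $2n$. The composition of the rotations about $\g_0\times\{0\}$ and $\g_0\times\{h\}$ equals the vertical translation $T(2h)$. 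These relations imply that the translates $g(W_{h,1,1})$, $g\in{\cal G}$, tile $\H^2\times\R$ with pairwise disjoint interiors, each adjacent pair sharing exactly one horizontal geodesic face. Setting
\[
{\cal M}_h(n)=\bigcup_{g\in{\cal G}} g(\Sigma_h),
\]
the copies $g(\Sigma_h)\subset g(W_{h,1,1})$ have pairwise disjoint interiors and glue into a smooth properly embedded minimal surface by Schwarz reflection along each shared face.

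From this description most of the listed properties are direct. Invariance by $T(2h)$ holds because $T(2h)\in{\cal G}$. The horizontal geodesics $\g_{k\pi/n}\times\{0\}$, $0\le k\le n-1$, lie in ${\cal M}_h(n)$ as the ${\cal G}$-orbit of the height-zero boundary arcs of $\Sigma_h$. The reflection symmetries across the planes $\g_{(1/2+k)\pi/n}\times\R$ and the slices $\H^2\times\{\pm h/2\}$ follow by propagating the two symmetries of $\Sigma_h$ itself under ${\cal G}$. The symmetry of $\Sigma_h$ across $\H^2\times\{h/2\}$ is already recorded in the construction, and the symmetry across $\g_{\t/2}\times\R$ follows from the uniqueness of $\Sigma_h$ inside $W_{h,1,1}$ (argued as in Claim~\ref{cl:uniqueness}), since this reflection preserves the Plateau data $\G_h$.

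The most delicate point is the asymptotic behavior. The prism $W_{h,1,1}$ reaches the ideal cylinder $\partial_\infty\H^2\times\R$ only along the two vertical segments $\{p_1\}\times[0,h]$ and $\{q_1\}\times[0,h]$, so $\Sigma_h$ can only escape to infinity along these. The barriers $S_0$ and $S_h$ introduced just above are asymptotic to the vertical planes $\g_\t\times\R$ and $\g_0\times\R$ near their respective ideal endpoints, and $\Sigma_h$ is trapped between them by the monotone convergence $\Sigma_{h,\mu,\mu}\nearrow\Sigma_h$ as $\mu\to 1$. I will combine this trapping with standard curvature estimates to conclude that the tangent planes of $\Sigma_h$ converge to $\g_\t\times\R$ and $\g_0\times\R$ at the respective ideal segments; applying ${\cal G}$ then propagates the asymptotics to the $n$ vertical planes $\g_{k\pi/n}\times\R$ outside any large vertical cylinder about $\{\0\}\times\R$. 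This asymptotic step is where I expect the main technical work to lie, while the tiling and propagation of symmetries are essentially bookkeeping once ${\cal G}$ has been identified.
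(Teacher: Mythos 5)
Your proposal follows essentially the same route as the paper: take the fundamental piece $\Sigma_h$ obtained as the monotone limit of the Plateau solutions $\Sigma_{h,\mu,\mu}$ with the barriers $S_0,S_h$, get its two symmetries from uniqueness in $W_{h,1,1}$, and extend by $\pi$-rotations about the four horizontal boundary geodesics; the extra group-theoretic bookkeeping you add is consistent with what the paper does implicitly. Two statements in your write-up are inaccurate, though neither is fatal. First, the ${\cal G}$-orbit of $W_{h,1,1}$ does \emph{not} tile $\H^2\times\R$: each generator is (reflection across $\g_0$ or $\g_\t$)$\,\times\,$(reflection across $\{t=0\}$ or $\{t=h\}$), so ${\cal G}$ is the equal-parity index-two subgroup of the product of the horizontal dihedral group of order $2n$ with the vertical infinite dihedral group, and the orbit of the prism covers only a ``checkerboard'' half of $\H^2\times\R$; moreover adjacent copies in this orbit meet along horizontal geodesic \emph{edges} (the arcs of $\G_h$ and their images), not along $2$-dimensional faces. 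What your argument actually needs — pairwise disjoint interiors (true, since $W_{h,1,1}$ is a fundamental domain of the larger product group) and smooth matching across the shared horizontal geodesics by Schwarz reflection — survives, so embeddedness and the propagation of the symmetries and of the horizontal geodesics go through as you describe.

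Second, the asymptotics: $S_0$ and $S_h$ are \emph{not} asymptotic to $\g_\t\times\R$ and $\g_0\times\R$; they are the graphs of $u$ and $h-u$ on the ideal triangle, which blow up along the vertical plane over the ideal edge $\overline{p_1q_1}$, and both lie on the \emph{same} (positive $Y_{\t/2}$-) side of $\Sigma_h$, so ``trapped between them'' is not the right mechanism — in the paper their only role is to guarantee convergence of the increasing family $\Sigma_{h,\mu,\mu}$. The asymptotic behaviour should instead be extracted from the inclusion $\Sigma_h\subset W_{h,1,1}$: near the ideal vertex $q_1$ (resp.\ $p_1$) the triangle $\0\,p_1\,q_1$ is a cusp whose width tends to zero, so $\Sigma_h$ is squeezed onto $\g_0\times[0,h]$ (resp.\ $\g_\t\times[0,h]$) there, and the curvature/angle-function estimates used for $M_{h,\l}$ in Subsection~\ref{subsec:M} (stability of the $Y_{\t/2}$-graph, $\nu\to 0$ as one approaches the ideal vertical segment) upgrade this to genuine asymptotic convergence to the vertical planes; applying ${\cal G}$ then yields the $n$ planes $\g_{k\pi/n}\times\R$ outside a large cylinder, as you indicate. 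With these corrections your plan coincides with the paper's construction.
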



\subsection{The minimal surfaces $\Sigma_{h,\l}$ and
  $M_{h,\l}$}\label{subsec:M}

In order to prepare our work in Sections~\ref{sec:2per} and
\ref{sec:isomH2}, we continue to study the solutions of the Plateau
problem introduced in Subsection~\ref{subsec:vertical}.

Recall that $n\geq 2$ is an integer number,
$\t=\pi/n$ and $\l,\mu\in(0,1)$. We now fix $\l$ and $h>0$, and we
consider the family of $Y_{\t/2}$-graphs $\Sigma_{h,\l,\mu}$ as $\mu$
moves. This family is monotone increasing in the $\mu$-parameter. And,
for fixed $h$, the $Y_{\t/2}$-graphs $\Sigma_{h,\l,\mu}$ are bounded
from above by the surface $\Sigma_h$ constructed in the preceding
subsection. Thus $\Sigma_{h,\l,\mu}$ converges to a minimal
$Y_{\t/2}$-graph $\Sigma_{h,\l}$ when $\mu\rightarrow 1$. This surface
is an embedded minimal disk bounded by
\[
\G_{h,\l} =
\begin{array}{l} \overline{(q_1,0)\,(\0,0)}\cup
  \overline{(\0,0)\,(p_\l,0)}\cup \overline{(p_\l,0)\,(p_\l,h)}\\
  \cup \overline{(p_\l,h)\,(\0,h)}\cup \overline{(\0,h)\,(q_1,h)}.
\end{array}
\]
In fact, applying the techniques of Claim~\ref{cl:uniqueness}, we
conclude that $\Sigma_{h,\l}$ is the only minimal disk contained in
$W_{h,\l,1}$ which is bounded by $\G_{h,\l}$.

The Alexandrov reflection method with respect to horizontal slices
says that every $\Sigma_{h,\l,\mu}$ is a symmetric vertical bigraph
with respect to $\H^2\times\{h/2\}$ (see
Appendix~\ref{app:alexandrov}). Hence this is also true for
$\Sigma_{h,\l}$.

\begin{figure}
  \begin{center}
    \includegraphics[width=0.5\textwidth]{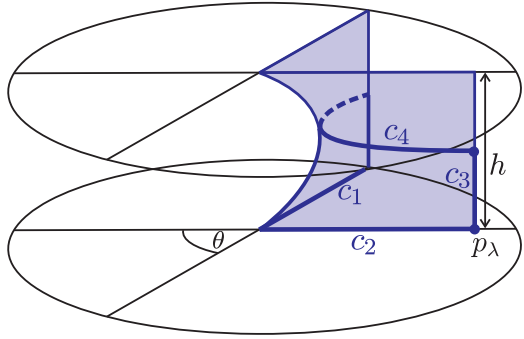}
  \end{center}
  \caption{The minimal disk $\Sigma_{h,\l}$ bounded by $\G_{h,\l}$,
    and the minimal vertical graph $M_{h,\l}=\Sigma_{h,\l}\cap\{0\leq
    t\leq h/2\}$ bounded by $c_1\cup c_2\cup c_3\cup c_4$.}
  \label{fig:plateau3}
\end{figure}

We consider
\[
M_{h,\l}= \Sigma_{h,\l}\cap\{0\leq t\leq h/2\},
\]
which is a minimal vertical graph bounded by $c_1,c_2,c_3,c_4$ (see
Figure~\ref{fig:plateau3}), where:
\begin{itemize}
\item $c_1= \overline{(q_1,0)\,({\bf 0},0)}=\g_0^+$ is half a complete
  horizontal geodesic line;
\item $c_2=\overline{({\bf 0},0)\,(p_\l,0)}$ is a horizontal geodesic
  of length $\ln\left(\frac{1+\l}{1-\l}\right)$, forming an angle $\t$
  with $c_1$ at $A_0=({\bf 0},0)$;
\item $c_3=\overline{(p_\l,0)\,(p_\l,h/2)}$ is a vertical geodesic
  line of length $h/2$;
\item $c_4=M_{h,\l}\cap\{t=h/2\}$ is a horizontal geodesic curvature
  line with endpoints $(p_\l,h/2)$ and $(q_1,h/2)$.
\end{itemize}

The domain $\Omega_0$ over which $M_{h,\l}$ is a graph is included in
the triangular domain of vertices $\0, p_\l, q_1$, and it is bounded
by $\overline{q_1 \0}$, $\overline{\0 p_\l}$ and $\pi(c_4)$.  The
latter curve goes from $p_\l$ to $q_1$ and is concave with respect to
$\Omega_0$ because of the boundary maximum principle using vertical
geodesic planes, which implies that the mean curvature vector of
$\pi(c_4)\times\R$ points outside $\Omega_0\times \R$.

On $M_{h,\l}$, we fix the unit normal vector field $N$ whose
associated angle function $\nu=\langle N,\partial_t\rangle$ is
non-negative. The vector field $N$ extends smoothly to $\partial
M_{h,\l}$ (by Schwarz symmetries). It is not hard to see that $\nu$
only vanishes on $c_3\cup c_4$, and $\nu=1$ at $A_0=(\0,0)$.

Since $\Sigma_{h,\l}$ is a $Y_{\t/2}$-graph, then it is stable, so it
satisfies a curvature estimate away from its boundary. Hence the
curvature is uniformly bounded on $M_{h,\l}$ away from $c_1$, $c_2$
and $c_3$. Besides, $M_{h,\l}$ can be extended by symmetry along $c_1$
and $c_2$ as a vertical graph, thus as a stable surface. Hence, on
$M_{h,\l}$, the curvature is uniformly bounded away from $c_3$.

Because of this curvature estimate and since $M_{h,\l}\subset
W_{h,\l,1}$, the angle function $\nu$ goes to zero as we approach
$q_1\times[0,h/2]$, and the asymptotic intrinsic distance from $c_1$
to $c_4$ is $h/2$.


\section{Doubly periodic minimal surfaces}\label{sec:2per}
In this section, we construct doubly periodic minimal surfaces,
\textit{i.e.} properly embedded minimal surfaces invariant by a
subgroup of $\Isom(\H^2\times\R)$ isomorphic to $\Z^2$. In fact, we
only consider subgroups generated by a hyperbolic translation along a
horizontal geodesic and a vertical translation. More precisely, let
$(\phi_l)_{l\in(-1,1)}$ be the flow of $Y_0$. We are interested in
properly embedded minimal surfaces which are invariant by the subgroup
of $\Isom(\H^2\times\R)$ generated by $\phi_l$ and $T(h)$, for
fixed~$l$ and~$h$.  We notice that the quotient $\M$ of $\H^2\times\R$
by this subgroup is diffeomorphic to $\T\times\R$, where $\T$ is a
2-torus.

One trivial example of a doubly periodic minimal surface is the
vertical plane $\gamma_0\times\R$. The quotient surface is
topologically a torus and it is in fact the only compact minimal
surface in the quotient (see Theorem~\ref{thm:alexandrov}). Other
trivial examples are given by the quotients of a horizontal slice
$\H^2\times\{t_0\}$ or a vertical totally geodesic minimal plane
$g(\mu)\times\R$. Both cases give flat annuli in the quotient.

In the following subsections, we construct non-trivial examples, that
are similar to minimal surfaces of $\R^3$ built by H.~Karcher
in~\cite{Kar}. Their ends are asymptotic to the horizontal and/or the
vertical flat annuli described above.


\subsection{Doubly periodic Scherk minimal surfaces}
\label{subsec:scherk2p}

In this subsection we construct minimal surfaces of genus zero in $\M$
which have two ends asymptotic to two vertical annuli and two ends
asymptotic to two horizontal annuli in the quotient.  These examples
are similar to the doubly periodic Scherk minimal surface in $\R^3$.

\begin{figure}
  \begin{center}
    \includegraphics[width=0.5\textwidth]{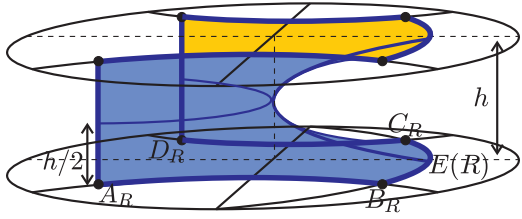}
  \end{center}
  \caption{The Jordan curve $\G_R$ and the embedded minimal disk
    $\Sigma_R$ bounded by $\G_R$.}
   \label{fig:scherk2p}
 \end{figure}

 Let $A_R$, $B_R$ in $g(-\mu)$ and $C_R$, $D_R$ in $g(\mu)$ at
 distance $R$ from $\g_0$ such that $A_R$ and $D_R$ are in $\{x<0\}$
 and $B_R$ and $C_R$ are in $\{x>0\}$, see
 Figure~\ref{fig:scherk2p}. We fix $h>\pi$ and consider the following
 Jordan curve:
 \[
 \begin{array}{ll}
   \G_R = & \overline{(A_R,0)\,(B_R,0)}\cup
   (E(R)\times\{0\})\cup \overline{(C_R,0)\,(D_R,0)}\\
   &  \cup \overline{(D_R,0)\,(D_R,h)}\cup \overline{(D_R,h)\,(C_R,h)}\cup
   (E(R)\times\{h\})\\
   & \cup \overline{(B_R,h)\,(A_R,h)}\cup
   \overline{(A_R,h)\,(A_R,0)},
 \end{array}
 \]
 where $E(R)$ is the subarc of the equidistant $d(R)$ to $\g_0$ that
 joins $B_R$ to $C_R$.  We consider a least area embedded minimal disk
 $\Sigma_R$ with boundary $\G_R$.

 Using the Alexandrov reflection technique with respect to horizontal
 slices, one proves that $\Sigma_R$ is a vertical bigraph with respect
 to $\{t=h/2\}$ (see Appendix~\ref{app:alexandrov}).

 Since $\Sigma$ is area-minimizing, it is stable. This gives uniform
 curvature estimates far from the boundary. Besides $\Sigma_R\cap
 \{0\le t\le h/2\}$ is a vertical graph that can be extended by
 symmetry with respect to $\overline{(A_R,0)\,(B_R,0)}$ to a larger
 vertical graph.  Thus we also obtain uniform curvature estimates in a
 neighborhood of $\overline{(A_R,0)\,(B_R,0)}$. This is also true for
 the three other horizontal geodesic arcs in $\G_R$.

 Let $A_\infty$ and $D_\infty$ be the endpoints of $g(-\mu)$ and
 $g(\mu)$, that are limits of $A_R$ and $D_R$ as $R\to+\infty$. For
 any $R$, $\Sigma_R$ is on the half-space determined by
 $\overline{A_\infty D_\infty}\times\R$ that contains $\G_R$.

 Since $h>\pi$, we can consider the surface $S_h$ described in
 Appendix~\ref{app:barrier}: $S_h\subset \H^2\times(0,h)$ is a
 vertical bigraph with respect to $\{t=h/2\}$ which is invariant by
 translations along $\g_0$ and whose boundary is
 $(\a\times\{0\})\cup\overline{(0,1,0)(0,1,h)}\cup
 (\a\times\{h\})\cup\overline{(0,-1,h)(0,-1,0)}$, where
 $\a=\partial_\infty\H^2\cap\{x>0\}$.  Let $(\chi_l)_{l\in(-1,1)}$ be
 the flow of the Killing vector field $Y_{\pi/2}$. For $l$ close
 to~$1$, $\chi_l(S_h)$ does not meet $\Sigma_R$. Since
 $\overline{(D_R,0)\,(D_R,h)}$ and $\overline{(A_R,h)\,(A_R,0)}$ are
 the only part of $\G_R$ in $\H^2\times(0,h)$, we can let $l$ decrease
 until $l_R<0$, where $\chi_{l_R}(S_h)$ touches $\Sigma_R$ for the
 first time. Actually, there are two first contact points: $(A_R,h/2)$
 and $(D_R,h/2)$.  By the maximum principle, the surface $\Sigma_R$ is
 contained between $\chi_{l_R}(S_h)$ and $\overline{A_\infty
   D_\infty}\times\R$. We notice that $l_R> l_{R'}$, for any $R'>R$,
 and $l_R\rightarrow l_\infty >-1$, where
 $\chi_{l_\infty}(\g_0)=\overline{A_\infty D_\infty}$.

 We recall that $Z_{\pi/2}$ is the unit vector field normal to the
 equidistant surfaces to $\g_0\times\R$.
 \begin{claim}\label{claim:Zgraph}
   $\Sigma_R\setminus\G_R$ is a $Z_{\pi/2}$-graph over the open
   rectangle $\overline{A_0\,D_0}\times(0,h)$ in $\g_0\times\R$.
 \end{claim}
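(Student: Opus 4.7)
The proof adapts the Alexandrov-type argument from the proof of Theorem~\ref{thm:alexandrov} to the compact disk $\Sigma_R$ with boundary $\G_R$. I will establish two facts: (i) the $Z_{\pi/2}$-projection of $\Sigma_R\setminus\G_R$ is contained in the open rectangle $(A_0,D_0)\times(0,h)$; and (ii) each $Z_{\pi/2}$-orbit meets $\Sigma_R\setminus\G_R$ in at most one point.

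Part (i) follows from the convex hull property for minimal surfaces combined with the strong maximum principle. The contour $\G_R$ is contained in the ``box'' bounded by the totally geodesic barriers $g(\pm\mu)\times\R$, $\H^2\times\{0\}$, and $\H^2\times\{h\}$: the vertical segments at $A_R,D_R$ and the horizontal arcs of $\G_R$ along $g(\pm\mu)$ lie on $g(\pm\mu)\times\R$, the horizontal curves at $t=0,h$ lie in the corresponding slices, and the arc $E(R)\subset d(R)$ lies strictly in the strip between $g(-\mu)$ and $g(\mu)$. Hence $\Sigma_R$ lies in this box, and the strong maximum principle forces $\Sigma_R\setminus\G_R$ into its open interior. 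The $Z_{\pi/2}$-projection of that open interior is exactly $(A_0,D_0)\times(0,h)$.

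For part (ii), I fix $\nu_0\in(-\mu,\mu)$, let $\g=g(\nu_0)$, and consider the family of totally geodesic vertical planes $P(s)=L_s\times\R$ normal to $\g$ at the point of $\g$ at signed distance $s$ from $(0,\nu_0)=\g\cap\g_0$; note $P(0)=\g_0\times\R$. Starting with $s\gg 0$ so that $P(s)\cap\Sigma_R=\emptyset$ (by compactness) and sliding $s$ downward, I perform Alexandrov reflection of $\Sigma_R\cap\{s'>s\}$ across $P(s)$ after the first contact $s_+$. The key step is that this reflection proceeds all the way to $s=0$ without any interior tangential contact between the reflected surface and $\Sigma_R\cap\{s'<s\}$: such a contact at some $s^*\in[0,s_+]$ would, by the maximum principle, force $\Sigma_R$, and hence $\G_R$, to be symmetric across $P(s^*)$. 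But the arc $E(R)\subset d(R)\times\{0\}$ contained in $\G_R$ reflects across $P(s^*)$ to an arc on a different equidistant curve (to a geodesic ultraparallel to $\g_0$ when $s^*>0$, since $L_{s^*}$ and $\g_0$ are both perpendicular to $\g$ at distinct points; or to $d(-R)\neq d(R)$ when $s^*=0$), which is not part of $\G_R$. Therefore $\Sigma_R\cap\{x>0\}$ is a Killing graph for the Killing field $Y_{\nu_0}$ of translations along $\g$, and a symmetric argument (sliding $s$ from $-\infty$ upward) gives the same property for $\Sigma_R\cap\{x<0\}$.

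Combining these two Killing-graph properties with the vertical bigraph structure of $\Sigma_R$ across $\{t=h/2\}$, which exhibits $\Sigma_R\cap\{0\le t\le h/2\}$ as a vertical graph $t=u(x,y)$ over a domain $\Omega\subset\H^2$, the $Y_{\nu_0}$-graph property translates into the monotonicity of $u$ along $g(\nu_0)\cap\Omega$. Hence the orbit $\g\times\{t_0\}$ meets $\Sigma_R\setminus\G_R$ in at most one point for every $t_0\in(0,h)$, and varying $\nu_0$ yields the $Z_{\pi/2}$-graph property over the whole rectangle. The main technical obstacle is verifying the asymmetry of $\G_R$ under each candidate reflection plane $P(s^*)$; this ultimately relies on the observation that the equidistant curve $d(R)$ is uniquely associated to $\g_0$, so any reflection across $P(s^*)$ that does not preserve $\g_0$ must send $E(R)$ outside of $\G_R$.
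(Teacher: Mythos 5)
Your reduction of the claim to an Alexandrov reflection along the tilted vertical planes $P(s)$ orthogonal to $g(\nu_0)$ has a genuine gap at its central step. For a \emph{compact disk with boundary}, the sweep can stop not only at a contact where the maximum principle applies (an interior tangency, or a tangency along $\Sigma_R\cap P(s)$), which is the only scenario you address via the asymmetry of $\G_R$, but also at a first contact involving boundary points: the reflected image of the part of $\G_R$ beyond $P(s)$ --- pieces of the geodesic arcs along $g(\pm\mu)\times\{0,h\}$, of the equidistant arcs $E(R)\times\{0,h\}$, and, in the sweep from $s=-\infty$, the vertical segments over $A_R$ and $D_R$ --- is not contained in $\G_R$, is (for generic $\nu_0$) orthogonal to none of the planes $P(s)$, and may a priori meet the unreflected part of the surface or of its boundary. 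Such a contact yields no symmetry and no contradiction; it simply halts the procedure before $s=0$, and you have not excluded it. The paper is well aware of this difficulty: Appendix~\ref{app:alexandrov} is devoted to boundary contacts in the far more favorable case where the reflection planes are horizontal slices and the vertical edges of $\G_R$ are orthogonal to every plane of the family. This is precisely why the paper does not prove the claim by reflection at all: it first shows that $\Sigma_R\setminus\G_R$ is transverse to $Z_{\pi/2}$ by a tangency (nodal-set) argument with the foliation by the surfaces $P$ of Appendix~\ref{app:barrier}, using that $\Sigma_R$ is a disk and that $P\cap\G_R$ consists of at most two points and/or horizontal arcs, and then concludes by a counting argument: the transverse intersection number with each orbit is locally constant on the rectangle and equals $1$ at $(A_0,h/2)$.

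There is a second gap at the end. Even if both sweeps reached $s=0$, you would only know that each of $\Sigma_R\cap\{x\ge 0\}$ and $\Sigma_R\cap\{x\le 0\}$ is a Killing graph for $Y_{\nu_0}$; an orbit $g(\nu_0)\times\{t_0\}$ could still meet the surface once on each side. In the proof of Theorem~\ref{thm:alexandrov} this juncture is resolved using the direction of the non-vanishing mean curvature vector, a tool unavailable here since $\Sigma_R$ is minimal. Your appeal to ``monotonicity of $u$ along $g(\nu_0)\cap\Omega$'' presupposes both that this intersection is connected and that the monotonicity directions produced by the two half-sweeps are coherent across $\g_0$; neither is established, and the coherence is essentially equivalent to the statement being proved.
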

 \begin{proof}
   It is clear that the projection of $\Sigma_R\setminus \G_R$ over
   $\g_0\times\R$ in the direction of $Z_{\pi/2}$ coincides with
   $\overline{A_0\,D_0}\times(0,h)$.  Let us prove that
   $\Sigma_R\setminus \G_R$ is transverse to $Z_{\pi/2}$. Assume that
   $q$ is a point in $\Sigma_R\setminus \G_R$ where $\Sigma_R$ is
   tangent to $Z_{\pi/2}$. Thus there is a minimal surface $P$ given
   by Appendix~\ref{app:barrier} which is invariant by translation
   along $Z_{\pi/2}$, passes through $q$ and is tangent to
   $\Sigma_R$. Near $q$, the intersection $P\cap \Sigma_R$ is composed
   of $2n$ arcs meeting at $q$, with $n\geq 2$.

   By definition of $P$ and $\Gamma_R$, the intersection $P\cap \G_R$
   is composed either by two points, or by one point and one geodesic
   arc of type $\overline{(A_R,0) \,(B_R,0)}$, or by two arcs of type
   $\overline{(A_R,0)\,(B_R,0)}$ and
   $\overline{(D_R,h)\,(C_R,h)}$. Since $\Sigma_R$ is a disk, we get
   that there exists a component of $\Sigma_R\setminus P$ which has
   all its boundary in $P$. This is impossible by the maximum
   principle, since $\H^2\times\R$ can be foliated by translated
   copies of $P$. The surface $\Sigma_R$ is then transverse to
   $Z_{\pi/2}$.

   Now let $q$ be a point in $\overline{A_0\,D_0}\times(0,h)$, and
   $\ell_q$ be the geodesic passing by $q$ and generated by
   $Z_{\pi/2}$. The intersection of $\ell_q$ with $\Sigma_R$ is always
   transverse, so the number of intersection points does not depend on
   $q$. For $q=(A_0,h/2)$, this number is $1$. Therefore,
   $\Sigma_R\setminus\G_R$ is a $Z_{\pi/2}$-graph over the open
   rectangle $\overline{A_0\,D_0}\times(0,h)$.
 \end{proof}

 Now let $R$ tend to $\infty$. Because of the curvature estimates, and
 using that each $\Sigma_R$ is a $Z_{\pi/2}$-graph bounded by
 $\chi_{l_R}(S_h)$ and $\overline{A_\infty D_\infty}\times\R$, we
 obtain that, the surfaces $\Sigma_R$ converge to a minimal surface
 $\Sigma_\infty$ satisfying the following properties:
   \begin{itemize}
   \item $\Sigma_\infty$ lies in the region of $\{0\le t\le h\}$
     bounded by $g(-\mu)\times\R$, $g(\mu)\times\R$,
     $\overline{A_\infty D_\infty}\times\R$ and
     $\chi_{l_\infty}(S_h)$;
   \item $\partial \Sigma_\infty= (g(-\mu)\times\{0\})\cup
     (g(\mu)\times\{0\})\cup (g(\mu)\times\{h\})\cup
     (g(-\mu)\times\{h\})$;
   \item $\Sigma_\infty\setminus\partial \Sigma_\infty$ is a vertical
     bigraph with respect to $\{t= h/2\}$ and a $Z_{\pi/2}$-graph over
     $\overline{A_0\,D_0}\times(0,h)$;
   \item $\Sigma_\infty\cap \{x\le 0\}$ is asymptotic to
     $g(-\mu)\times[0,h]$ and $g(\mu)\times[0,h]$; and
     $\Sigma_\infty\cap \{x\ge 0\}$ is asymptotic to
     $\{t=0\}/\phi_\mu^2$ and $\{t=h\}/\phi_\mu^2$.
\end{itemize}

After extending $\Sigma_\infty$ by successive symmetries with respect
to the horizontal geodesics contained in its boundary, we obtain a
surface $\Sigma$ invariant by the subgroup generated by the horizontal
hyperbolic translation $\phi_\mu^4$ and the vertical translation
$T(2h)$.  In the quotient by $\phi_\mu^4$ and $T(2h)$, this surface is
topologically a sphere minus four points. Two of the ends of $\Sigma$
are vertical and two of them are horizontal. This surface is similar
to the doubly periodic Scherk minimal surface of $\R^3$.

\begin{proposition}\label{prop:Scherk2p}
  For any $h>\pi$ and any $\mu\in(0,1)$, there exists a properly
  embedded minimal surface $\Sigma$ in $\H^2\times\R$ which is
  invariant by the vertical translation $T(2h)$ and the horizontal
  hyperbolic translation $\phi_\mu^4$ along $\g_0$. In the quotient by
  $T(2h)$ and $\phi_\mu^4$, $\Sigma$ is topologically a sphere minus
  four points, and it has two ends asymptotic to the quotients of
  $\{x>0,t=0\}$ and $\{x>0,t=h\}$, and two ends asymptotic to the
  quotients of $(g(-\mu)\cap\{x<0\})\times[0,h]$ and
  $(g(\mu)\cap\{x<0\})\times[0,h]$. Moreover, $\Sigma$ contains the
  horizontal geodesics $g(\pm\mu)\times\{0\},g(\pm\mu)\times\{h\}$,
  and is invariant by reflection symmetry with respect to $\{t=h/2\}$
  and $\g_{\pi/2}\times\R$.  We call these examples {\em doubly
    periodic Scherk minimal surfaces}.  Finally, we remark that
  $\Sigma$ admits a non-orientable quotient by $\phi_\mu^4$ and
  $T(h)\circ\phi_\mu^2$.
\end{proposition}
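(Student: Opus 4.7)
The construction up to and including the limit surface $\Sigma_\infty$ is already carried out in the discussion preceding the proposition, so my plan is to extend $\Sigma_\infty$ to a complete surface $\Sigma$ and then to verify its stated symmetries, topology, and asymptotics.

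I would first extend $\Sigma_\infty$ by Schwarz reflection ($\pi$-rotation) about each of its four horizontal boundary geodesics $g(\pm\mu)\times\{0\}$ and $g(\pm\mu)\times\{h\}$, iterating to obtain a complete smooth minimal surface $\Sigma$ invariant under the group $G$ generated by these four rotations. Smoothness across each reflection axis is immediate, since each axis is a geodesic of $\H^2\times\R$. Inside $G$ one then identifies the two advertised generators: the product of the $\pi$-rotations about $g(\mu)\times\{0\}$ and $g(\mu)\times\{h\}$ (whose axes share the common perpendicular $\{q_\mu\}\times\R$) equals the vertical translation $T(2h)$, and the product of the $\pi$-rotations about $g(-\mu)\times\{0\}$ and $g(\mu)\times\{0\}$ (whose axes share the common perpendicular $\g_0\times\{0\}$ at hyperbolic distance $d(q_{-\mu},q_\mu)=2d(\0,q_\mu)$ apart) equals the horizontal hyperbolic translation $\phi_\mu^4$.

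The main obstacle is proving that $\Sigma$ is properly embedded. The essential tools are the $Z_{\pi/2}$-graph property of $\Sigma_\infty$ from Claim~\ref{claim:Zgraph} together with its confinement in the slab $\{0\le t\le h\}$ and between the vertical planes $g(\pm\mu)\times\R$ on its ``left side''. These separation properties guarantee that the translates of $\Sigma_\infty$ and its Schwarz-reflected copies fill disjoint open regions of $\H^2\times\R$ cut out by the iterated reflection planes, meeting their neighbors only along the one-dimensional reflection axes, which glue smoothly by Schwarz reflection; hence $\Sigma$ is embedded, and properness follows because every compact set of $\H^2\times\R$ meets only finitely many such pieces. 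The reflection symmetries of $\Sigma$ across $\{t=h/2\}$ and $\g_{\pi/2}\times\R$ are inherited from the corresponding symmetries of each $\G_R$: the former via Alexandrov reflection in horizontal slices applied to $\Sigma_R$, the latter via uniqueness of the least-area disk spanning a $\G_R$ that is already invariant under reflection across $\g_{\pi/2}\times\R$.

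For the quotient by $\langle\phi_\mu^4,T(2h)\rangle$, a fundamental domain in $\H^2\times\R$ meets exactly four Schwarz-reflected copies of $\Sigma_\infty$ glued along the four boundary geodesics, which descend to four disjoint simple closed curves in the quotient; cutting along these four curves returns a disk, so the quotient is topologically a sphere with four punctures. The asymptotic description of the four ends follows directly from the last bullet in the properties of $\Sigma_\infty$: the two ``right-side'' ends (in $\{x\ge0\}$) are asymptotic to the quotients of the horizontal halfplanes $\{x>0,t=0\}$ and $\{x>0,t=h\}$, while the two ``left-side'' ends are asymptotic to the quotients of the vertical half-strips $(g(\pm\mu)\cap\{x<0\})\times[0,h]$. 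Finally, for the non-orientable quotient I would decompose
\[
T(h)\circ\phi_\mu^2 \;=\; R_{\{t=h/2\}}\;\circ\;\rho_{g(\mu)\times\{0\}}\;\circ\;R_{\g_{\pi/2}\times\R},
\]
where $\rho_{g(\mu)\times\{0\}}$ denotes the $\pi$-rotation about $g(\mu)\times\{0\}$. All three factors are symmetries of $\Sigma$, and each reverses its orientation: at a fixed point of the factor, the differential preserves the unit normal to $\Sigma$ (for the two plane reflections because $\Sigma$ meets those planes orthogonally, and for the axial rotation because it negates the horizontal normal along the axis). Hence $T(h)\circ\phi_\mu^2$ is orientation-reversing on $\Sigma$ while $\phi_\mu^4$ is orientation-preserving, so the quotient $\Sigma/\langle\phi_\mu^4,T(h)\circ\phi_\mu^2\rangle$ is non-orientable.
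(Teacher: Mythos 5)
Your route is essentially the paper's: the proof of the proposition is the construction of $\Sigma_\infty$ followed by extension via the $\pi$-rotations about its four boundary geodesics, and your identification of $T(2h)$ and $\phi_\mu^4$ as products of these rotations, the tiling argument for embeddedness and properness, and the orientation-reversal computation for $T(h)\circ\phi_\mu^2=R_{\{t=h/2\}}\circ\rho_{g(\mu)\times\{0\}}\circ R_{\g_{\pi/2}\times\R}$ are all correct. However, your justification of the symmetry across $\g_{\pi/2}\times\R$ has a gap: you invoke ``uniqueness of the least-area disk spanning $\G_R$'', but least-area disks bounded by a Jordan curve are not unique in general, and no uniqueness statement is proved for $\G_R$ (the sliding argument of Claim~\ref{cl:uniqueness} concerns a different contour and is not carried out here). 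The symmetry is instead obtained as in Subsections~\ref{subsec:horizontal} and~\ref{subsec:horiztorus}: Alexandrov reflection with respect to the family of vertical planes $g(\nu)\times\R$ shows that $\Sigma_R$ is a $Y_0$-bigraph with respect to $g(0)\times\R=\g_{\pi/2}\times\R$, hence symmetric, and this passes to $\Sigma_\infty$ and to $\Sigma$.

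The topological argument is also wrong as written, even though the conclusion is right. By your own parity observation, the copies of $\Sigma_\infty$ occupy only the ``even'' boxes (strip)$\times$(slab); since $\phi_\mu^4$ shifts by two strips and $T(2h)$ by two slabs, a fundamental domain of $\langle\phi_\mu^4,T(2h)\rangle$ contains exactly \emph{two} copies of $\Sigma_\infty$, not four. Moreover, the four boundary geodesics do \emph{not} descend to simple closed curves: neither $\phi_\mu^4$ (a translation along $\g_0$, transverse to $g(\pm\mu)$) nor $T(2h)$ preserves any of them, so in the quotient they are four disjoint properly embedded lines running between the punctures; and cutting a four-punctured sphere along four disjoint simple closed curves could never return a single disk (Euler characteristic $-2$ versus $1$), so that sentence cannot be repaired as stated. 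The correct picture is the following: each copy of $\Sigma_\infty$ is homeomorphic to a closed square with its four corners removed, the open edges corresponding to the four boundary geodesics and the missing corners to the two horizontal and two vertical asymptotic directions; the quotient is two such squares glued along their four open edges in pillowcase fashion, i.e.\ a sphere minus four points, whose four ends have the asymptotic behavior inherited from the last property of $\Sigma_\infty$.
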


\begin{remark}
  When $h<\pi$ and $\mu$ is large enough, we can prove by using the
  maximum principle with vertical catenoids and a fundamental piece of
  the surface $\Sigma$ described in Proposition~\ref{prop:Scherk2p},
  that the corresponding doubly periodic Scherk minimal surface does
  not exist.

  On the other hand, when $h<\pi$ and $\mu$ is small enough, we can
  solve the Plateau problem above in the exterior of certain surface
  ${\cal M}(R,\widetilde\mu)$ described in
  Proposition~\ref{prop:horTHL2}, to prove that the corresponding
  doubly periodic Scherk minimal surface $\Sigma$ exists.
\end{remark}


\subsection{Doubly periodic minimal Klein bottle examples: horizontal
  and vertical Toroidal Halfplane Layers}

In this subsection, we construct non-trivial families of examples of
doubly periodic minimal surfaces.

\medskip

Let us consider the surface $\Sigma_{h,\l}$ constructed in
Subsection~\ref{subsec:M} for $n=2$. By successive extensions by
symmetry along its boundary we get a properly embedded minimal surface
$\Sigma$ which is invariant by the vertical translation $T(2h)$ and
the horizontal translation $\chi_\l^2$, where $(\chi_l)_{l\in(-1,1)}$
is the flow of $Y_{\pi/2}$. The quotient surface by the subgroup of
isometries of $\H^2\times\R$ generated by $T(2h)$ and $\chi_\l^2$ is
topologically a Klein bottle minus two points. The ends of the surface
are asymptotic to vertical annuli. If we consider the quotient by the
group generated by $T(2h)$ and $\chi_\l^4$, we get topologically a
torus minus four points. This example corresponds to the Toroidal
Halfplane Layer of $\R^3$ denoted by $M_{\theta,0,\pi/2}$
in~\cite{Rod}.

\begin{proposition}
  For any $h>0$ and any $\l\in(0,1)$, there exists a properly embedded
  minimal surface in $\H^2\times\R$ invariant by the vertical
  translation $T(2h)$ and the horizontal hyperbolic translation
  $\chi_\l^2$ along $\g_{\pi/2}$, which is topologically a Klein
  bottle minus two points in the quotient by $T(2h)$ and
  $\chi_\l^2$. The surface is invariant by reflection symmetry with
  respect to $\{t=h/2\}$, contains the geodesics $\g_0\times\{0,h\}$,
  $\g_{\pi/2}\times\{0,h\}$ and $\{p_\lambda\}\times\R$, and its ends
  are asymptotic to the quotient of $\g_0\times\R$. Moreover, the
  surface is topologically a torus minus four points when considered
  in the quotient by $T(2h)$ and $\chi_\l^4$.  We call these examples
  {\em horizontal Toroidal Halfplane Layers of type 1}.
\end{proposition}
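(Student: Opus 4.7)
The plan is to start from the minimal disk $\Sigma_{h,\lambda}$ constructed in Subsection~\ref{subsec:M} with $n=2$ (so $\theta=\pi/2$ and $p_\lambda=(\lambda,0)\in\gamma_{\pi/2}$), and extend it by iterated Schwarz reflection across the geodesic arcs of its boundary $\Gamma_{h,\lambda}$. That boundary decomposes into the two half-lines $c_1=\gamma_0^+\times\{0\}$ and $c_1'=\gamma_0^+\times\{h\}$, the two finite segments $c_2,c_2'$ of length $\log\tfrac{1+\lambda}{1-\lambda}$ along $\gamma_{\pi/2}\times\{0,h\}$, and the vertical segment $c_3=\{p_\lambda\}\times[0,h]$. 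Each arc is an ambient geodesic, so the $180°$-rotation about it is an isometry of $\H^2\times\R$ and extends $\Sigma_{h,\lambda}$ analytically as a minimal surface. Iterating these reflections produces a connected minimal surface $\Sigma$.

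Next I would read off the symmetries by composing Schwarz reflections. The product of the $\pi$-rotations about $c_1$ and $c_1'$ equals $T(2h)$: the two $\gamma_0$-reflections cancel, while $t\mapsto -t$ composed with $t\mapsto 2h-t$ gives $t\mapsto t+2h$. The product of the $\pi$-rotations about $c_1$ and $c_2$ equals the $\pi$-rotation $R_{\mathbf{0}}$ about $\{\mathbf{0}\}\times\R$; composing $R_{\mathbf{0}}$ with the $\pi$-rotation $R_{p_\lambda}$ about $\{p_\lambda\}\times\R$ (the axis extending $c_3$) yields a hyperbolic translation along $\gamma_{\pi/2}$ by $2\,d(\mathbf{0},p_\lambda)$, namely $\chi_\lambda^2$, by the standard fact that the product of two $\pi$-rotations about parallel vertical axes is a translation by twice their distance. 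Hence $\Sigma$ is invariant under $\langle T(2h),\chi_\lambda^2\rangle$. The Alexandrov bigraph symmetry of $\Sigma_{h,\lambda}$ with respect to $\{t=h/2\}$ (Subsection~\ref{subsec:M}) passes to $\Sigma$. Iterated Schwarz reflections also yield the claimed geodesic content: reflecting $c_1$ across $c_2$ produces $\gamma_0^-\times\{0\}$, so $\gamma_0\times\{0\}\subset\Sigma$, and similarly $\gamma_0\times\{h\},\ \gamma_{\pi/2}\times\{0,h\}\subset\Sigma$; $T(2h)$-periodicity then extends $c_3$ to the full vertical geodesic $\{p_\lambda\}\times\R$.

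Embeddedness and properness of $\Sigma$ follow from Claim~\ref{cl:uniqueness}: $\Sigma_{h,\lambda}$ is a $Y_{\pi/4}$-graph in the prism $W_{h,\lambda,1}$, and the orbit of $W_{h,\lambda,1}$ under the Schwarz group tiles $\H^2\times\R$. Each reflected copy of $\Sigma_{h,\lambda}$ is a graph over its tile for the corresponding Killing field, so distinct copies meet only along common boundary arcs, where the extension is analytic. The asymptotic control from Subsection~\ref{subsec:M} (angle function $\nu\to 0$ along $\{q_1\}\times[0,h/2]$, with asymptotic intrinsic distance $h/2$ between $c_1$ and $c_4$) shows the ends of $\Sigma$ are asymptotic to $\gamma_0\times\R$, hence to its image in the quotient.

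The main obstacle is the topological identification in the quotient, which rests on an orientation analysis of $\chi_\lambda^2$. At the smooth point $(\mathbf{0},0)\in\Sigma$ the tangent plane of $\Sigma$ is horizontal (spanned by the tangents $\partial_y,\partial_x$ of $c_1,c_2$, which force the gradient of the defining graph to vanish), so $R_{\mathbf{0}}$ acts on $T_{(\mathbf{0},0)}\Sigma$ as $-\mathrm{id}$ (determinant $+1$), i.e., orientation-preservingly on $\Sigma$. At $(p_\lambda,0)\in\Sigma$ the tangent plane is the vertical geodesic plane $\gamma_{\pi/2}\times\R$ (spanned by the tangents $\partial_x,\partial_t$ of $c_2,c_3$), and $R_{p_\lambda}$ restricts to $\operatorname{diag}(-1,1)$ on this plane (determinant $-1$), hence orientation-reversingly on $\Sigma$. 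Therefore $\chi_\lambda^2=R_{\mathbf{0}}\circ R_{p_\lambda}$ reverses orientation on $\Sigma$, while $T(2h)$ (a product of two Schwarz reflections) preserves it, so $\Sigma/\langle T(2h),\chi_\lambda^2\rangle$ is non-orientable. Computing the index of $\langle T(2h),\chi_\lambda^2\rangle$ in the Schwarz group gives $4$ (generated modulo translations by the two commuting involutions $R_{c_1},R_{c_2}$), so a fundamental domain contains four copies of $\Sigma_{h,\lambda}$ glued along their edges, with two punctures arising from the $\chi_\lambda^2$-orbits of the ideal points $q_{\pm 1}\in\partial_\infty\H^2$; a standard cell count then yields Euler characteristic $-2$, identifying the quotient as a Klein bottle minus two points. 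Passing to the index-$2$ subgroup $\langle T(2h),\chi_\lambda^4\rangle$, where $\chi_\lambda^4=(\chi_\lambda^2)^2$ is orientation-preserving, the quotient becomes the orientable double cover, a torus minus four points.
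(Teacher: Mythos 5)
Your construction is the same as the paper's: take $\Sigma_{h,\lambda}$ from Subsection 4.3 with $n=2$, extend by Schwarz rotation about the boundary geodesics, and read off the group $\langle T(2h),\chi_\lambda^2\rangle$; the paper states the result with essentially no further detail, and your symmetry computation, orientation analysis at the fixed points $(\0,0)$ and $(p_\lambda,0)$, index-$4$ count, and Euler characteristic $-2$ are all correct and correctly identify the quotients as a Klein bottle minus two points and (for $\langle T(2h),\chi_\lambda^4\rangle$, the orientation-preserving subgroup) a torus minus four points. One assertion should be weakened: the orbit of the prism $W_{h,\lambda,1}$ under the Schwarz group does \emph{not} tile $\H^2\times\R$ — the triangle $\0\,p_\lambda\,q_1$ has angle strictly less than $\pi/2$ at $p_\lambda$, so the four copies surrounding the vertical line $\{p_\lambda\}\times\R$ leave a gap (and the ideal vertex causes further non-covering). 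What is true, and is all your embeddedness argument needs, is that the images of the prism have pairwise disjoint interiors: the triangle lies in a strict fundamental half-strip for the horizontal group generated by the reflections across $\gamma_0$, $\gamma_{\pi/2}$ and the geodesic through $p_\lambda$ orthogonal to $\gamma_{\pi/2}$, while $[0,h]$ is a strict fundamental interval for the vertical dihedral group, so two copies of the (open) surface can only meet along shared boundary arcs, where the extension is analytic.
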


Let us see another example. This one is similar to the preceding one,
but its ends are now asymptotic to horizontal slices.  We use the
notation introduced in Subsection~\ref{subsec:horizontal}.  For $R>0$,
let $w_R$ be the solution to \eqref{mse} over $\Ome(R)$ with boundary
values zero on $\partial\Ome(R)\setminus \g_0$ and $h/2$ on
$\gamma_0\cap\partial\Ome(R)$.  By the maximum principle,
$w_R<w_{R'}<v$ on $\Omega_R$, for any $R'>R$, where $v$ is the
Abresch-Sa Earp barrier described in Appendix~\ref{app:barrier}.  The
graphs $w_R$ converge as $R\rightarrow +\infty$ to the unique solution
$w$ of \eqref{mse} on $\Ome(\infty)$ with boundary values $h/2$ on
$\overline{q_\mu q_{-\mu}}$ minus its endpoints and $0$ on the
remaining boundary, including the asymptotic boundary at
infinity. (By~\cite{MaRoRo}, we directly know that such a graph exists
and is unique.)

\begin{figure}
  \begin{center}
    \includegraphics[width=0.5\textwidth]{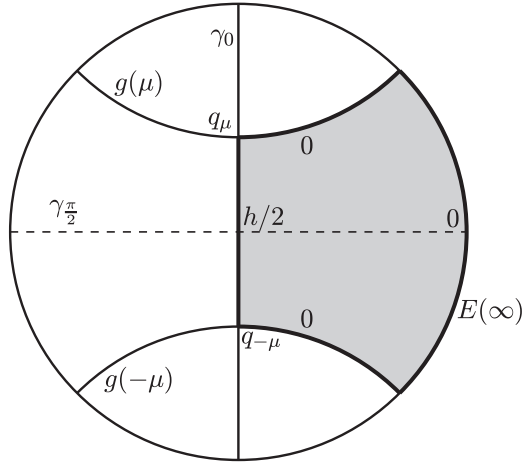}
  \end{center}
  \caption{The minimal surface $\widetilde\Sigma$ of
    Proposition~\ref{prop:vertTHL} is obtained from the vertical
    minimal graph $w$ over $\Ome(\infty)$ (the shadowed domain) with
    the prescribed boundary data.}
  \label{fig:omega_inf_2}
\end{figure}

By uniqueness, we know that such a graph is invariant by reflection
symmetry with respect to the vertical geodesic plane
$\g_{\pi/2}\times\R$. Moreover, the boundary of this graph is composed
of two halves of $g(\mu)$ and $g(-\mu)$ and
$\overline{(q_\mu,0)(q_\mu,h/2)} \cup
\overline{(q_\mu,h/2)(q_{-\mu},h/2)} \cup \overline{
  (q_{-\mu},h/2)(q_{-\mu},0)}$.

If we extend the graph of $w$ by successive symmetries about the
geodesic arcs in its boundary, we obtain a properly embedded minimal
surface $\widetilde\Sigma$ which is invariant by the $\Z^2$ subgroup
$G_1$ of isometries of $\H^2\times\R$generated by $T(h)$ and
$\phi_\mu^4$.  In the quotient by $G_1$, $\widetilde\Sigma$ is a Klein
bottle with two ends asymptotic to the quotient by $G_1$ of the two
horizontal annuli obtained in the quotient of $\H^2\times\{0\}$.  The
quotient by the subgroup generated by $T(2h)$ and $\phi_\mu^4$ gives a
torus minus four points. This example also corresponds to the Toroidal
Halfplane Layer of $\R^3$ denoted by $M_{\theta,0,\pi/2}$
in~\cite{Rod}.

Finally, we remark that taking limits of $\widetilde\Sigma$ as
$h\to+\infty$, we get the horizontal singly periodic Scherk minimal
surface constructed in Subsection~\ref{subsec:horizontal}.

\begin{proposition}\label{prop:vertTHL}
  For any $h>0$ and any $\mu\in(0,1)$, there exists a properly
  embedded minimal surface $\widetilde\Sigma$ in $\H^2\times\R$ which
  is invariant by the vertical translation $T(h)$ and the horizontal
  hyperbolic translation $\phi_\mu^4$ along $\g_0$. In the quotient by
  $T(h)$ and $\phi_\mu^4$, $\widetilde\Sigma$ is topologically a Klein
  bottle minus two points. The ends of $\widetilde\Sigma$ are
  asymptotic to the quotient of $\H^2\times\{0\}$. The surface is
  invariant by reflection symmetry with respect to
  $\g_{\pi/2}\times\R$, and contains the geodesics
  $\g_0\times\{h/2\}$, $\{q_{\pm\mu}\}\times\R$ and
  $g(\pm\mu)\times\{0\}$.  Moreover, in the quotient by $T(2h)$ and
  $\phi_\mu^4$, the surface is topologically a torus minus four points
  corresponding to the ends of the surface (asymptotic to the quotient
  of the horizontal slices $\{t=0\}$ and $\{t=h\}$).  We call these
  examples {\em vertical Toroidal Halfplane Layers of type 1}.
\end{proposition}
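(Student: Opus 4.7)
The plan is to first construct the fundamental graph $w$, then produce $\widetilde\Sigma$ by Schwarz reflection through the geodesic arcs in its boundary, and finally extract the periodicity, topology and asymptotic information from that construction.

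\textbf{Step 1: Existence and symmetry of $w$.} I would work on the domain $\Omega(R)\subset\H^2$ introduced in Subsection~\ref{subsec:horizontal} and use Perron's method (or direct minimization) to solve \eqref{mse} with the indicated boundary values ($h/2$ on $\overline{q_\mu q_{-\mu}}$, $0$ elsewhere on $\partial\Omega(R)$). Each $w_R$ is bounded from above by the Abresch--Sa~Earp barrier $v$ (see Appendix~\ref{app:barrier}) and from below by $0$, and the family $(w_R)_R$ is monotone increasing by the maximum principle. A standard compactness argument for uniformly bounded solutions of \eqref{mse} gives convergence to a solution $w$ on $\Omega(\infty)$. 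Its boundary values are exactly the prescribed ones: along the asymptotic boundary this follows from the barrier $v\to 0$ at infinity (or directly from the existence/uniqueness statement in~\cite{MaRoRo}). Invariance under reflection across $\gamma_{\pi/2}$ follows from the symmetry of the boundary data and uniqueness.

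\textbf{Step 2: Reflection-extension to $\widetilde\Sigma$.} The boundary of the graph of $w$ consists of: the two horizontal geodesic rays $g^+(\pm\mu)\times\{0\}$, the two vertical segments $\{q_{\pm\mu}\}\times[0,h/2]$, and the horizontal segment $\overline{q_\mu q_{-\mu}}\times\{h/2\}$ (which lies along $\gamma_0\times\{h/2\}$). Because the normal to the graph is tangent to the ambient vertical planes $g(\pm\mu)\times\R$ along the first pieces, and tangent to the horizontal slices at height $h/2$ along the top segment (the latter by the bigraph symmetry one gets from Alexandrov reflection in horizontal slices together with uniqueness), and because the boundary consists of ambient geodesics, I can apply the Schwarz reflection principle for minimal surfaces: rotation by $\pi$ about each horizontal geodesic in $\partial(\text{graph}\,w)$, and reflection across the vertical planes $\{q_{\pm\mu}\}\times\R$. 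Successive application of these rotations generates an orbit that is exactly the $\Z^2$--lattice generated by $T(h)$ (produced by composing the rotations about $\gamma_0\times\{h/2\}$ and $g(\mu)\times\{0\}$, together with the bigraph symmetry) and $\phi_\mu^4$ (produced by composing reflections across $\{q_{\pm\mu}\}\times\R$). The resulting surface $\widetilde\Sigma$ is properly embedded: embeddedness across each reflection arc is automatic from the fact that the pieces lie on opposite sides of the reflection locus (the graph is a vertical bigraph locally around each geodesic in its boundary), and properness follows because a fundamental piece is compactly contained in a slab.

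\textbf{Step 3: Topology and ends in the quotient.} In the quotient $\M_1=(\H^2\times\R)/\langle T(h),\phi_\mu^4\rangle$, a fundamental domain of $\widetilde\Sigma$ consists of four isometric copies of the graph of $w$ glued along three geodesic arcs in their boundary; the remaining boundary is the four horizontal rays lying in the slice $\{t=0\}/\langle T(h),\phi_\mu^4\rangle$, which project to two annular ends. Counting vertices, edges and faces (or cutting along the symmetry curves) shows the underlying compactification is a Klein bottle and that the two punctures are the two horizontal ends. Similarly, in the quotient by $\langle T(2h),\phi_\mu^4\rangle$, the surface is an orientable double cover with four ends, hence a four-times punctured torus. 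The additional symmetries and the geodesic curves contained in $\widetilde\Sigma$ (namely $\gamma_0\times\{h/2\}$, $\{q_{\pm\mu}\}\times\R$ and $g(\pm\mu)\times\{0\}$) are built into the construction.

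\textbf{Step 4: Asymptotic behavior.} The main delicate point, and the one I expect to be the hardest, is showing that each end is asymptotic to the corresponding quotient of $\{t=0\}$ rather than spiraling or oscillating. For this I would use that $0\le w\le v$ on $\Omega(\infty)$ together with the fact that $v\to 0$ as one approaches $\partial_\infty\H^2$ in the interior of $\Omega(\infty)$, plus gradient estimates for solutions of \eqref{mse} away from their boundary infinity curves, to conclude that $w$ and $|\nabla w|$ both tend to $0$ at the asymptotic boundary of $\Omega(\infty)$ away from $\partial_\infty g(\pm\mu)$. This forces the corresponding ends of $\widetilde\Sigma$ to be asymptotic to the horizontal annulus obtained by quotienting $\{t=0\}$. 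The horizontal singly periodic Scherk limit as $h\to\infty$ follows by the same monotone-convergence argument applied to $w_R$ with the barrier $v$ replaced uniformly in $h$, recovering the surface of Proposition~\ref{prop:Scherk1p}.
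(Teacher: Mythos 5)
Your construction is the same as the paper's: the exhaustion $w_R$ on $\Ome(R)$ with data $h/2$ on $\overline{q_\mu q_{-\mu}}$ and $0$ elsewhere, monotonicity plus the Abresch--Sa Earp barrier, symmetry of the limit $w$ by uniqueness, and extension of the graph of $w$ by Schwarz symmetries about the geodesic arcs of its boundary, followed by the identification of the invariance group, the quotient topology and the asymptotics. The outline is therefore right, but two of your justifications are incorrect as written. In Step 2 you invoke hypotheses that belong to reflection across a totally geodesic plane met orthogonally, whereas the operation actually needed is rotation by $\pi$ about a geodesic contained in the boundary, which requires no condition on the normal. Your specific claims are moreover false or unavailable: along $g^+(\pm\mu)\times\{0\}$ the unit normal of the graph has a nonzero component along $\nabla w$, which is perpendicular to $g(\pm\mu)$ (Hopf boundary lemma), so it is \emph{not} tangent to $g(\pm\mu)\times\R$; and there is no ``bigraph symmetry from Alexandrov reflection in horizontal slices'': the fundamental piece is a graph with boundary values $0$ and $h/2$, its extension is invariant under the rotation $(p,t)\mapsto(\sigma_0(p),h-t)$ about $\g_0\times\{h/2\}$ (where $\sigma_0$ is the reflection of $\H^2$ across $\g_0$) but not under $(p,t)\mapsto(p,h-t)$; indeed, unlike the type-2 layers of Proposition~\ref{prop:THLS}, this surface has no symmetry in $\{t=h/2\}$. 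The extension is legitimate simply because every boundary arc --- $g^+(\pm\mu)\times\{0\}$, $\{q_{\pm\mu}\}\times[0,h/2]$ (these are geodesic \emph{lines}, not planes, and the relevant symmetry is the rotation by $\pi$ about them) and $\overline{q_\mu q_{-\mu}}\times\{h/2\}$ --- is an ambient geodesic.

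The second slip concerns the period $T(h)$, which is part of the statement: composing the rotations about $\g_0\times\{h/2\}$ and $g(\mu)\times\{0\}$ gives $(p,t)\mapsto(\rho_\mu(p),t\pm h)$, where $\rho_\mu$ is the rotation of $\H^2$ by $\pi$ about $q_\mu$, and composing further with a horizontal-slice reflection (your ``bigraph symmetry'') cannot cancel $\rho_\mu$; in any case that reflection is not a symmetry of the surface. The correct third factor is the rotation about the vertical geodesic $\{q_\mu\}\times\R$, which is one of the generating symmetries: its composition with the two rotations above is a vertical translation by $\pm h$, while composing the rotations about $\{q_\mu\}\times\R$ and $\{q_{-\mu}\}\times\R$ gives $\phi_\mu^{\pm4}$. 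With these two repairs, the remainder of your outline (four copies of the graph of $w$ per fundamental domain of $\langle T(h),\phi_\mu^4\rangle$, the Klein bottle and torus identifications, and the asymptotics from $0\le w\le v$ together with gradient estimates away from the boundary) is exactly the paper's argument; only note that in the quotient the horizontal rays are interior gluing curves rather than boundary, the identifications also involve the vertical arcs over $q_{-\mu}$ via $\phi_\mu^4$, and the two ends are the annular neighborhoods of the asymptotic boundary, asymptotic to the quotient of $\{t=0\}$.
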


\begin{remark}
  \textbf{``Generalized vertical Toroidal Halfplane Layers of type
    1''.}  Consider the domain $\Omega(\infty)$ with prescribed
  boundary data $h/2$ on $\overline{q_\mu q_{-\mu}}$ minus its
  endpoints, $0$ on $(g(\mu)\cup g(-\mu))\cap\{x>0\}$ and a continuous
  function $f$ on the asymptotic boundary $E(\infty)$ of
  $\Omega(\infty)$ at infinity, $f$ vanishing on the endpoints of
  $E(\infty)$ and satisfying $|f|\leq h/2$. By Theorem~4.9
  in~\cite{MaRoRo}, we know there exists a (unique) solution to this
  Dirichlet problem.  By rotating recursively such a graph surface an
  angle $\pi$ about the vertical and horizontal geodesics in its
  boundary, we get a {\em ``generalized vertical Toroidal Halfplane
    Layers of type 1''}, which is properly embedded and invariant by
  the vertical translation $T(h)$ and the horizontal hyperbolic
  translation $\phi_\mu^4$ along $\g_0$.  In the quotient by $T(h)$
  and $\phi_\mu^4$, such a surface is topologically a Klein bottle
  minus two points corresponding to the ends of the surface, that are
  asymptotic to the quotient of a entire minimal graph invariant by
  $\phi_\mu^4$ which contains the geodesics $g(\mu)\times\{0\}$ and
  $g(-\mu)\times\{0\}$. In the quotient by $T(2h)$ and $\phi_\mu^4$,
  the surface is topologically a torus minus four points.
\end{remark}


\subsection{Other vertical Toroidal Halfplane
  Layers}
\label{subsec:horiztorus}

The construction given in this subsection is very similar to the one
considered in Subsection~\ref{subsec:scherk2p}, and we use the
notation introduced there. We consider $h>\pi$ and $\Gamma_R$ the
following Jordan curve:
\[
\begin{array}{ll}
  \Gamma_R = &\overline{(B_0,0)\,(B_R,0)}\cup
  (E(R)\times\{0\})\cup \overline{(C_R,0)\,(C_0,0)}\\
  & \cup \overline{(C_0,0)\,(C_0,h)}\cup \overline{(C_0,h)\,(C_R,h)}\cup
  (E(R)\times\{h\})\\
  & \cup \overline{(B_R,h)\,(B_0,h)}\cup
  \overline{(B_0,h)\,(B_0,0)} .
\end{array}
\]

\begin{figure}
  \begin{center}
    \includegraphics[width=0.5\textwidth]{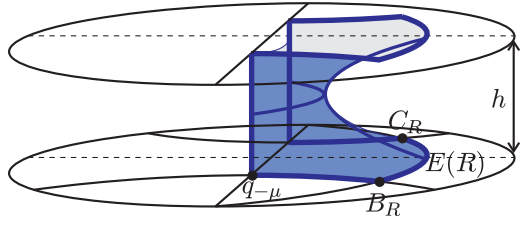}
  \end{center}
  \caption{The embedded minimal disk $\Sigma_R$ bounded by $\G_R$
    (Subsection~\ref{subsec:horiztorus}).}
  \label{fig:Toroidal3}
\end{figure}

$\G_R$ bounds an embedded minimal disk $\Sigma_R$ with minimal
area. As in Subsection~\ref{subsec:scherk2p}, $\Sigma_R$ is a vertical
bigraph with respect to $\{t=h/2\}$. So the sequence of minimal
surfaces $\Sigma_R$, as $R$ varies, satisfies a uniform curvature
estimate far from $\overline{(C_0,0)\,(C_0,h)}$,
$\overline{(B_0,0)\,(B_0,h)}$, $E(R)\times\{0\}$ and
$E(R)\times\{h\}$.

Using the Alexandrov reflection technique with respect to the vertical
planes $g(\nu)\times\R$ as in Subsection~\ref{subsec:horizontal}, we
prove that $\Sigma_R$ is a $Y_0$-bigraph with respect to
$g(0)\times\R=\g_{\pi/2}\times\R$. Thus extending $\Sigma_R$ by
symmetry with respect to $\overline{(B_0,0)\,(B_R,0)}$,
$\overline{(B_0,0)\,(B_0,h)}$ and $\overline{(B_0,h)\, (B_R,h)}$, we
see that a neighborhood of $\overline{(B_0,0)\,(B_0,h)}$ is a
$Y_{\pi/2}$-graph. This neighborhood is then stable and we get
curvature estimates there.  Therefore, the minimal surfaces $\Sigma_R$
satisfy a uniform curvature estimate far from $E(R)\times\{0\}$ and
$E(R)\times\{h\}$.

The surface $\Sigma_R$ is included in $\{x\ge 0\}\times[0,h]$. If
$S_h$ is the same surface as in Subsection~\ref{subsec:scherk2p}
(described in Appendix~\ref{app:barrier}) and $(\chi_l)_{l\in(-1,1)}$
is the flow of $Y_{\pi/2}$, for $l$ close to $1$, $\chi_l(S_h)$ does
not meet $\Sigma_R$. Since $\overline{(B_0,0)\,(B_0,h)}$ and
$\overline{(C_0,h)\,(C_0,0)}$ are the only part of $\Gamma_R$ in
$\H^2\times(0,h)$, we can let $l$ decrease until $l_0<0$, where
$\chi_{l_0}(S_h)$ touches $\partial\Sigma_R$ for the first
time. Actually, $l_0$ does not depend on $R$, and there is two first
contact points: $(B_0,h/2)$ and $(C_0,h/2)$. The surface $\Sigma_R$ is
then between $\chi_{l_0}(S_h)$ and $\g_0\times\R$.

As in Subsection~\ref{subsec:scherk2p}, $\Sigma_R\setminus \Gamma_R$
is a $Z_{\pi/2}$-graph over the open rectangle
$\overline{B_0\,C_0}\times(0,h)$ in $\g_0\times\R$. Then let $R$ tend
to $+\infty$. The surfaces $\Sigma_R$ converge to a minimal surface
$\Sigma_\infty$ satisfying:
\begin{itemize}
\item $\Sigma_\infty$ lies in the region of $\{0\le t\le h\}$ bounded
  by $g(-\mu)\times\R$, $g(\mu)\times\R$, $\g_0$ and
  $\chi_{l_0}(S_h)$.
\item $\Sigma_\infty$ is bounded by four half geodesic lines:
  $\overline{(B_0,0)\,(B_\infty,0)}$,
  $\overline{(B_0,h)\,(B_\infty,h)}$,
  $\overline{(C_0,0)\,(C_\infty,0)}$,
  $\overline{(C_0,h)\,(C_\infty,h)}$, and by two vertical segments:
  $\overline{(B_0,0)\,(B_0,h)}$ and $\overline{(C_0,0)\,(C_0,h)}$.
  Here $B_\infty$ and $C_\infty$ are the limits of the $B_R$ and $C_R$
  as $R\to+\infty$, contained in $\partial_\infty\H^2$.
\item $\Sigma_\infty\setminus\partial\Sigma_\infty$ is a vertical
  bigraph with respect to $\{t= h/2\}$ and a $Z_{\pi/2}$-graph over
  $\overline{B_0\,C_0}\times(0,h)$.
\item $\Sigma_\infty$ is asymptotic to $\{t=0\}$ and $\{t=h\}$.
\end{itemize}

\begin{figure}
  \begin{center}
    \includegraphics[width=0.5\textwidth]{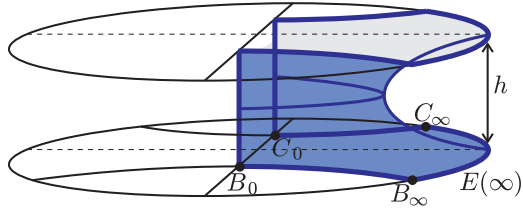}
  \end{center}
  \caption{The embedded minimal disk $\Sigma_\infty$ from which we
    obtain, after successive symmetries with respect to the geodesics
    in its boundary, the doubly periodic example described in
    Proposition~\ref{prop:THLS}.}
  \label{fig:Toroidal4}
\end{figure}

By successive symmetries of $\Sigma_\infty$ with respect to the
geodesics in its boundary, we get an embedded minimal surface $\Sigma$
invariant by the subgroup of isometries of $\H^2\times\R$ generated by
$\phi_\mu^4$ and $T(2h)$. The quotient surface is a torus minus four
points. This example corresponds to a Toroidal Halfplane Layer of
$\R^3$ denoted by $M_{\theta,\pi/2,0}$ in~\cite{Rod}.

  \begin{proposition}\label{prop:THLS}
    For any $h>0$ and any $\mu\in(0,1)$, there exists a properly
    embedded minimal surface $\Sigma$ in $\H^2\times\R$ which is
    invariant by the vertical translation $T(2h)$ and the horizontal
    hyperbolic translation $\phi_\mu^4$ along $\g_0$. In the quotient
    by $T(2h)$ and $\phi_\mu^4$, such a surface is topologically a
    torus minus four points. The ends of $\Sigma$ are asymptotic to
    the quotient of the horizontal slices $\{t=0\}$ and $\{t=h\}$.
    Moreover, $\Sigma$ contains the geodesics $g(\pm\mu)\times\{0\}$,
    $g(\pm\mu)\times\{h\}$ and $\{q_{\pm\mu}\}\times\R$, and is
    invariant by reflection symmetry with respect to $\{t=h/2\}$ and
    $\g_{\pi/2}\times\R$.  Finally, we remark that, in the quotient by
    $\phi_\mu^4$ and $T(h)\circ\phi_\mu^2$, $\Sigma$ is topologically
    a Klein bottle minus two points removed.  We call these examples
    {\em vertical Toroidal Halfplane Layers of type 2}.
  \end{proposition}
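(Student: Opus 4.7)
The plan is to take the minimal surface $\Sigma_\infty$ constructed just before the statement (the limit of the Plateau solutions $\Sigma_R$ as $R\to+\infty$) and extend it by Schwarz reflections to obtain the desired doubly periodic surface $\Sigma$. The boundary of $\Sigma_\infty$ consists of the two vertical geodesic segments $\overline{(B_0,0)(B_0,h)}$, $\overline{(C_0,0)(C_0,h)}$ and the four horizontal geodesic half-lines lying in $g(\mp\mu)\times\{0,h\}$. First I would reflect $\Sigma_\infty$ across the two vertical segments: since these segments are ambient geodesics lying on a minimal surface, Schwarz reflection through the vertical geodesic planes $g(\mu)\times\R$ and $g(-\mu)\times\R$ produces a smooth minimal extension. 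Next, I would reflect across each of the four horizontal boundary geodesics lying in $\{t=0\}$ and $\{t=h\}$, using rotation by $\pi$ about those horizontal geodesics; this generates copies of $\Sigma_\infty$ at heights $\{h,2h\}$ and in regions symmetric to the original one with respect to $g(\pm\mu)\times\R$.

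Iterating these reflections produces a complete minimal surface $\Sigma$. The key observation is that the composition of reflections through $g(\mu)\times\R$ and $g(-\mu)\times\R$ (translated by the symmetries at the corresponding edges) realizes a hyperbolic translation along $\g_0$ of translation length $4\operatorname{arctanh}\mu$, i.e.\ $\phi_\mu^4$; likewise, reflecting across $\{t=0\}$ followed by $\{t=h\}$ yields the vertical translation $T(2h)$. Thus $\Sigma$ is invariant by the $\Z^2$ group generated by $\phi_\mu^4$ and $T(2h)$. The additional symmetries $\{t=h/2\}$ and $\g_{\pi/2}\times\R$ claimed in the statement are inherited from the corresponding symmetries of $\Sigma_\infty$ (the former from the Alexandrov-reflection bigraph property, the latter from the uniqueness of the Plateau solution on the reflected contour $\Gamma_R$).

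For properness and embeddedness I would proceed as follows. $\Sigma_\infty$ is embedded as a $Z_{\pi/2}$-graph over $\overline{B_0 C_0}\times(0,h)$ and is contained in the closed slab bounded by $g(-\mu)\times\R$, $g(\mu)\times\R$, $\g_0\times\R$ and $\chi_{l_0}(S_h)$; the reflection domains tile $\H^2\times\R$ under the action of the generated group, and the reflected copies fit together along the boundary geodesics. By looking at a fundamental domain for $\langle\phi_\mu^4, T(2h)\rangle$ containing finitely many reflected copies of $\Sigma_\infty$, one sees that the quotient surface is compact away from the ends; embeddedness follows because the union of the bigraph pieces over the disjoint projection domains is itself embedded, and properness follows from properness of $\Sigma_\infty$ together with the discreteness of the group action. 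The asymptotic behavior $\Sigma_\infty\to\{t=0\}\cup\{t=h\}$ immediately gives that the ends of $\Sigma/\langle\phi_\mu^4,T(2h)\rangle$ are four horizontal planar ends asymptotic to the quotients of $\{t=0\}$ and $\{t=h\}$.

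The topology of the quotient and the Klein-bottle quotient by $\phi_\mu^4$ and $T(h)\circ\phi_\mu^2$ follow by a direct fundamental-domain count: a fundamental domain for $\langle\phi_\mu^4,T(2h)\rangle$ consists of eight reflected copies of $\Sigma_\infty$ glued along its boundary arcs, producing a compact surface with four punctures; an Euler characteristic computation (or direct identification) gives genus one, hence a torus minus four points. The non-orientable quotient is obtained from the glide reflection $T(h)\circ\phi_\mu^2$ which, combined with the reflection symmetries of $\Sigma$, identifies the fundamental domain in pairs and yields a Klein bottle with two punctures. The main potential obstacle is verifying that the Schwarz reflections close up without producing self-intersections; I would handle this by exploiting the $Z_{\pi/2}$-bigraph property of $\Sigma_\infty$ and the mean-convexity of the prism-like regions bounded by successive reflected copies of the barrier $\chi_{l_0}(S_h)$.
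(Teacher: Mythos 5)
Your overall route is exactly the paper's: take the limit surface $\Sigma_\infty$ and extend it by Schwarz symmetries across the six geodesic arcs of its boundary, then read off the invariance group, embeddedness and the quotient topology. The flaw is in which symmetries you invoke at that central step. A boundary arc that is an ambient geodesic allows extension by the \emph{rotation by $\pi$ about that geodesic}; reflection in a totally geodesic plane is a Schwarz symmetry only when the surface meets the plane orthogonally along a free boundary, which is neither known nor true here. So ``Schwarz reflection through the vertical geodesic planes $g(\pm\mu)\times\R$'' across the segments $\{q_{\pm\mu}\}\times[0,h]$ does not produce a smooth minimal extension; in fact reflection in $g(\mu)\times\R$ is not even a symmetry of the final surface (if it were, composing with the legitimate $\pi$-rotation about $\{q_\mu\}\times\R$ would make reflection in $\g_0\times\R$ a symmetry, forcing a sheet of $\Sigma$ over the strip between $g(-\mu)$ and $g(\mu)$ on the side $\{x\le 0\}$ in the slab $0\le t\le h$, where there is none; note the proposition only claims the reflections in $\{t=h/2\}$ and $\g_{\pi/2}\times\R$). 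The same objection applies to ``reflecting across $\{t=0\}$ followed by $\{t=h\}$ yields $T(2h)$'': the slice reflections are not symmetries of $\Sigma$. The correct bookkeeping, which is what the paper does, is to rotate by $\pi$ about $g(\pm\mu)\times\{0\}$, $g(\pm\mu)\times\{h\}$ and $\{q_{\pm\mu}\}\times\R$; then the composition of the rotations about $g(\mu)\times\{0\}$ and $g(\mu)\times\{h\}$ is $T(2h)$, and the composition of the rotations about the two vertical lines (equivalently, about $g(-\mu)\times\{0\}$ and $g(\mu)\times\{0\}$) is the hyperbolic translation along $\g_0$ of length $2\,d(q_{-\mu},q_\mu)=4\ln\frac{1+\mu}{1-\mu}=8\operatorname{arctanh}\mu$, which is $\phi_\mu^4$ (your stated length $4\operatorname{arctanh}\mu$ is that of $\phi_\mu^2$).

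Two smaller points. The symmetry with respect to $\g_{\pi/2}\times\R$ should not be deduced from ``uniqueness of the Plateau solution'' for $\Gamma_R$ (least-area disks are not known to be unique); the paper obtains it, together with the curvature estimates near the vertical segments, from Alexandrov reflection with the vertical planes $g(\nu)\times\R$. Also, a fundamental domain of $\langle\phi_\mu^4,T(2h)\rangle$ contains four, not eight, copies of $\Sigma_\infty$ (they sit in a checkerboard pattern over the two strips times the two slabs); since you never actually carried out the Euler characteristic count this is only a bookkeeping slip, but it would matter if the count were made precise. With the $\pi$-rotations in place of your plane and slice reflections, the rest of your outline (embeddedness from the $Z_{\pi/2}$-graph structure and the disposition of the copies, the asymptotics of the four ends, and the Klein bottle quotient by $\phi_\mu^4$ and $T(h)\circ\phi_\mu^2$) matches the paper's argument.
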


  Finally, we observe that, as $h\to+\infty$, $\Sigma$ converges to a
  horizontal singly periodic Scherk minimal surface described in
  Proposition~\ref{prop:Scherk1p}.


\subsection{Other horizontal Toroidal Halfplane Layers}

In this subsection, we also construct surfaces which are similar to
some of Karcher's most symmetric Toroidal Halfplane Layers of
$\R^3$. Now, its ends are asymptotic to vertical planes.

As in the preceding subsection, for $R\ge 0$, we consider the points
$B_R$ and $C_R$ in $g(-\mu)\cap\{x\ge 0\}$ and $g(\mu)\cap \{x\ge 0\}$
at distance $R$ from $\gamma_0$. Let ${\cal P}(R)$ be the polygonal
domain in $\H^2$ with vertices $B_0$, $B_R$, $C_R$ and $C_0$. Let
$u_n$ be the solution to \eqref{mse} defined in ${\cal P}(R)$ with
boundary value $0$ on $\overline{C_R C_0}\cup \overline{C_0 B_0} \cup
\overline{B_0 B_R}$ and $n$ on $\overline{B_R C_R}$. The graph of
$u_n$ is bounded by a polygonal curve. As in
Subsection~\ref{subsec:horizontal}, the sequence converge to a
solution $u_\infty$ of \eqref{mse} on ${\cal P}(R)$ with boundary
value $0$ on $\overline{C_R C_0}\cup \overline{C_0 B_0} \cup
\overline{B_0 B_R}$ and $+\infty$ on $\overline{B_R C_R}$
(by~\cite{NeRo}, we know that it exists and is unique).  The graph of
$u_\infty$, denoted by $\Sigma_R$, is bounded by
$(\{C_R\}\times\R^+)\cup \overline{C_R C_0}\cup \overline{C_0 B_0}
\cup \overline{B_0 B_R} \cup (\{B_R\}\times\R^+)$ and is asymptotic to
$\overline{C_R B_R}\times\R$.

By uniqueness of $u_\infty$, $\Sigma_R$ is symmetric with respect to
$\gamma_{\pi/2}\times\R$. We denote by $\beta_1$ the geodesic
curvature line of symmetry $\Sigma_R\cap (\gamma_{\pi/2}\times\R)$,
and by $F_R$ the intersection point of $\gamma_{\pi/2}$ with
$\overline{B_R C_R}$. We also consider the following points in the
boundary of $\Sigma_R$:
\[
p_1=(\0,0),\quad p_2=(B_0,0),\quad p_3=(B_R,0).
\]
The boundary of $\Sigma_R\cap \{y\le 0\}$ is composed of the union of
the curves $\beta_1$, $\beta_2=\overline{p_1 p_2}$,
$\beta_3=\overline{p_2 p_3}$ and $\beta_4=\{B_R\}\times\R^+$.

The vertical coordinate of the conjugate surface to $\Sigma_R$ is
given by a function $h^*$ defined on ${\cal P}_R$, which is a
primitive of the closed $1$-form $\omega^*$ defined
by~\eqref{def:omega}. We fix the primitive such that $h^*(B_R)=0$ (we
recall that the conjugate surface is well defined up to an isometry of
$\H^2\times\R$.  We can consider $h^*(B_R)=0$ up to a vertical
translation).  By definition of $\omega^*$ and using the fact that
$u_\infty\ge 0$ in ${\cal P}(R)$, we get that $h^*$ increases from $0$
to $h^*(B_0)>0$ along $\overline{B_R B_0}$; it increases from
$h^*(B_0)$ to $h_0=h^*(\0)>h^*(B_0)$ along $\overline{B_0\0}$; $h^*$
is constant along $\overline{\0 F_R}$; and finally $h^*$ increases
from $0$ to $h_0$ along $\overline{B_R F_R}$. In fact, $h_0$ is equal
to the distance from $B_R$ to $F_R$ , i.e. $h_0=h_0(\mu,R)=\frac 1 2
\mbox{dist}_{\H^2}(B_R,C_R)>\ln\frac{1+\mu}{1-\mu}$.

We denote by $\Sigma^*_R$ the conjugate minimal surface of
$\Sigma_R\cap \{y\le 0\}$. We have that
$\partial\Sigma_R^*=\beta_1^*\cup\beta_2^*\cup\beta_3^*\cup\beta_4^*$,
where each $\beta_i^*$ corresponds by conjugation to $\beta_i$.  We
also denote by $p_i^*$ de point in $\partial\Sigma_R^*$ corresponding
by conjugation to $p_i$, $i=1,2,3$.

Up to a vertical translation, we have fixed $p_3^*\in\{t=0\}$.  We can
also take $p_2^*=(\0,h^*(B_0))$, after a horizontal translation.

On the other hand, we know from \cite{HaSaTo} that $\Sigma^*_R$ is a
vertical graph over a domain ${\cal P}(R)^*$, since ${\cal P}(R)$ is
convex.  In particular, $\Sigma_R^*$ is embedded.  We now use the
properties of the conjugation introduced in
Subsection~\ref{subsec:conj} to describe the boundary of $\Sigma_R^*$:
\begin{itemize}
\item $\beta_1^*$ is half a horizontal geodesic with endpoint $p_1^*$.
  Since $p_1^*=(\pi(p_1^*),h_0)$, then we conclude that $\beta_1^*$ is
  contained in $\{t=h_0\}$.

\item The arc $\beta_2^*$ is a vertical geodesic curvature line of
  length $\ln\frac{1+\mu}{1-\mu}$ starting horizontally at $p_2^*$ and
  finishing at $p_1^*$.  In fact, $\beta_2^*$ is the graph of a convex
  increasing function over the (oriented) horizontal geodesic segment
  $\overline{\0\, \pi(p_1^*)}$.  Up to a rotation, we can assume
  $\overline{\0\, \pi(p_1^*)}\subset\g_0^+$.  Since $\beta_1$ and
  $\beta_2$ meet orthogonally at $p_1$ and conjugate surfaces are
  isometric, we get that $\beta_1^*$ is orthogonal to the vertical
  geodesic plane $\g_0\times\R$.  In particular, we can assume up to a
  reflection symmetry with respect to $\g_0\times\R$ that
  $\beta_1^*=g^+(\nu)\times\{h_0\}$, for a certain $\nu\in(0,\mu)$.

\item The curve $\beta_3^*$ is a vertical curvature line of length $R$
  starting horizontally at $p_2^*$ and finishing vertically at
  $p_3^*=(\pi(p_3^*),0)$.  Since $\beta_2,\beta_3$ meet orthogonally
  at $p_2$, the same happens to $\beta_2^*,\beta_3^*$ at $p_2^*$.  In
  particular, $\beta_3^*\subset\g_{\pi/2}\times\R$, and the normal to
  the surface along $\beta_3^*$ is tangent to $\g_{\pi/2}\times\R$.
  Hence $\beta_3^*$ is the graph of a strictly decreasing concave
  function over the (oriented) horizontal segment $\overline{\0\,
    \pi(p_3^*)}\subset\g_{\pi/2}$.  Finally, since
  $\Sigma_R^*\subset\{x>0\}$ in a neighborhood of $\beta_2^*$, we
  deduce $\overline{\0\, \pi(p_3^*)}\subset\g_{\pi/2}^+$.

\item The curve $\beta_4^*\subset\{t=0\}$ is a horizontal curvature
  line with non-vanishing geodesic curvature in $\{t=0\}\equiv\H^2$.
  Since the normal to $\Sigma_R^*$ points to the positive direction of
  the $x$-axis at $p_3^*$ and $\Sigma_R^*\subset\{y>0\}$ in a small
  neighborhood of $\beta_3^*$, we get that $\beta_4^*$ is orthogonal
  to $\g_{\pi/2}\times\R$ and lies inside $\{y>0\}$ near $p_3^*$.
  Moreover, the intrinsic distance in $\Sigma_R\cap\{y\leq 0\}$
  between $\beta_1$ and $\beta_4$ is $h_0$ (which is the asymptotic
  distance at infinity), and $\Sigma_R\cap\{y\leq 0\}$ is isometric to
  $\Sigma_R^*$, then $\beta_4^*$ is asymptotic to $g(\nu)$ at
  $\partial_\infty\H^2$.  This is, $\Sigma_R^*$ is asymptotic to
  $g(\nu)\times [0,h_0]$.  Finally, we know by the maximum principle
  for surfaces with boundary that $\beta_4^*$ is concave with respect
  to ${\cal P}(R)^*$. In particular, it is contained in $\{y>0\}$.
\end{itemize}

\begin{figure}
  \begin{center}
    \includegraphics[width=0.9\textwidth]{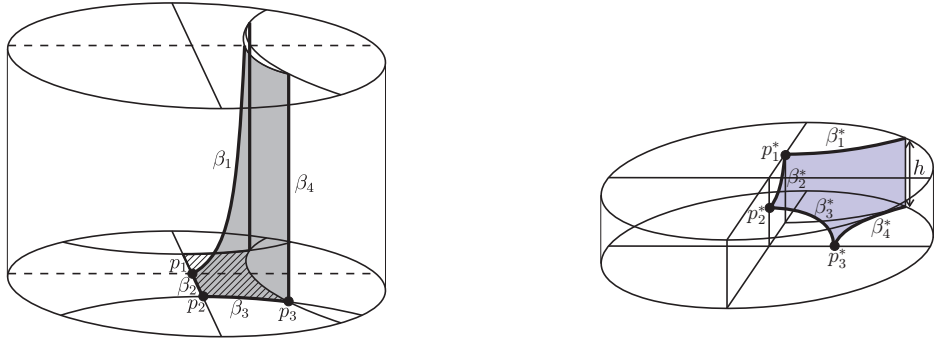}
  \end{center}
  \caption{Left: $\Sigma_R\cap\{y\leq 0\}$. Right: The conjugate
    surface $\Sigma_R^*$ of $\Sigma_R\cap\{y\leq 0\}$, from which we
    obtain after successive symmetries the doubly periodic example
    described in Proposition~\ref{prop:horTHL2}.}
  \label{fig:THL}
\end{figure}

By the maximum principle, $\Sigma_R^*\subset\{0\leq t\leq h_0\}$. If
we make reflection symmetries with respect to $\H^2\times\{0\}$,
$\gamma_0\times\R$ and $\g_{\pi/2}\times\R$, we get a properly
embedded minimal annulus bounded by the geodesics
$g(\pm\nu)\times\{\pm h_0\}$. Then by successive symmetries with
respect to these geodesic boundary lines, we get a doubly periodic
minimal surface invariant by $\phi_\nu^4$ and $T(4h_0)$.  In the
quotient by $\phi_\nu^4$ and $T(4h_0)$, the surface is topologically a
torus minus four points.  In the quotient by $T(4h_0)$ and
$T(2h_0)\circ \phi_\nu^2$, the surface is topologically a Klein bottle
minus two points.  These examples correspond to the Toroidal Halfplane
Layers of $\R^3$ denoted by $M_{\theta,0,0}$ in~\cite{Rod}. We now
have two free parameters instead of only one.

\begin{proposition}\label{prop:horTHL2}
  For any $R>0$ and any $\mu\in(0,1)$, there exist
  $h_0=h_0(R,\mu)>\ln\frac{1+\mu}{1-\mu}$ and
  $\nu=\nu(R,\mu)\in(0,\mu)$ for which there exists a properly
  embedded minimal surface ${\cal M}(R,\mu)$ in $\H^2\times\R$ which
  is invariant by the vertical translation $T(4h_0)$ and the
  horizontal hyperbolic translation $\phi_\nu^4$ along $\g_0$. In the
  quotient by $T(4h_0)$ and $\phi_\nu^4$, ${\cal M}(R,\mu)$ is
  topologically a torus minus four points, whose ends are asymptotic
  to the quotient of $g(\pm\nu)\times\R$. Moreover, ${\cal M}(R,\mu)$
  contains the horizontal geodesics $g(\pm\nu)\times\{\pm h_0\}$, and
  is invariant by reflection symmetry with respect to $\g_0\times\R$,
  $\g_{\pi/2}\times\R$ and $\{t=0\}$. In the quotient by $T(4h_0)$ and
  $T(2h_0)\circ \phi_\nu^2$, ${\cal M}(R,\mu)$ is topologically a
  Klein bottle minus two points.  We call these examples {\em
    horizontal Toroidal Halfplane Layers of type 2}.
\end{proposition}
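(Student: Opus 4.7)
The plan is to assemble ${\cal M}(R,\mu)$ by extending the Plateau-conjugate piece $\Sigma_R^*$ (whose boundary has just been analyzed) via Schwarz reflections followed by iterated $\pi$-rotations about horizontal geodesics. The parameters $h_0(R,\mu) > \ln\frac{1+\mu}{1-\mu}$ and $\nu(R,\mu) \in (0,\mu)$ come for free from that boundary description, since $h_0 = \frac{1}{2}\mathrm{dist}_{\H^2}(B_R, C_R)$ and $\nu$ is determined by $\beta_1^* = g^+(\nu)\times\{h_0\}$.

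First I would extend $\Sigma_R^*$ by Schwarz reflection through the three mutually orthogonal totally geodesic surfaces $\{t=0\}$, $\g_0\times\R$ and $\g_{\pi/2}\times\R$, which contain respectively the boundary arcs $\beta_4^*$, $\beta_2^*$ and $\beta_3^*$. The preceding boundary analysis shows that $\Sigma_R^*$ meets each of these surfaces orthogonally, so Schwarz reflection produces a smooth minimal extension. Since the three reflection surfaces are mutually orthogonal and $\Sigma_R^*$ lies in a single connected component of their complement, the seven reflected copies occupy pairwise disjoint components, and the resulting $(\Z/2)^3$-orbit $\widetilde\Sigma$ of $\Sigma_R^*$ is a properly embedded minimal annulus whose boundary consists of the four translates of $\beta_1^*$, namely the horizontal geodesics $g(\pm\nu)\times\{\pm h_0\}$.

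Next I would iterate the $\pi$-rotations about those four horizontal geodesics. The key observation is that $g(\nu)$ and $g(-\nu)$ share $\g_0$ as common perpendicular at hyperbolic distance $\ln\frac{1+\nu}{1-\nu}$ apart in $\H^2$, so the composition $r_{g(\nu)}\circ r_{g(-\nu)}$ is the translation along $\g_0$ of length $2\ln\frac{1+\nu}{1-\nu}$, which is exactly the translation length of $\phi_\nu^4$. Consequently the composition of the rotations about $g(\pm\nu)\times\{h_0\}$ equals $\phi_\nu^4$, and similarly the composition of the rotations about $g(\nu)\times\{\pm h_0\}$ equals $T(4h_0)$. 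The orbit ${\cal M}(R,\mu)$ of $\widetilde\Sigma$ under the group generated by these four involutions is therefore invariant under $\Lambda = \langle\phi_\nu^4, T(4h_0)\rangle$. Embeddedness and properness pass from $\widetilde\Sigma$ to ${\cal M}(R,\mu)$ because consecutive rotated copies lie in disjoint slabs bounded by translates of the vertical planes $g(\pm\nu)\times\R$ and by the horizontal planes at heights $(2k+1)h_0$, $k\in\Z$.

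Finally, the topological statements follow by counting fundamental pieces. Two copies of $\widetilde\Sigma$ glued along a boundary geodesic form a $\Lambda$-fundamental domain: it is compact, oriented, of Euler characteristic zero, and has four ends asymptotic to the quotient of $g(\pm\nu)\times\R$, identifying it as a torus minus four points. For the index-two sublattice $\langle T(4h_0), T(2h_0)\phi_\nu^2\rangle$, the additional element $T(2h_0)\phi_\nu^2$ acts on ${\cal M}(R,\mu)$ as an orientation-reversing glide exchanging the two halves of the $\Lambda$-fundamental domain, so the quotient becomes a Klein bottle minus two points. The only delicate step in the whole scheme is the preceding conjugate-boundary analysis of $\Sigma_R^*$—in particular, pinning down that $\nu$ lies in $(0,\mu)$ and verifying that the curvature line $\beta_4^*$ is asymptotic to the same geodesic $g(\nu)$ as $\beta_1^*$; once that is in hand, the reflection-and-rotation assembly above is essentially routine.
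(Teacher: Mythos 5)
Your proposal follows essentially the same route as the paper: the conjugate-boundary analysis of $\Sigma_R^*$ supplies $h_0$ and $\nu$, Schwarz reflection through $\{t=0\}$, $\g_0\times\R$ and $\g_{\pi/2}\times\R$ yields the properly embedded annulus bounded by $g(\pm\nu)\times\{\pm h_0\}$, and iterated $\pi$-rotations about these horizontal geodesics produce the surface invariant by $\phi_\nu^4$ and $T(4h_0)$, with the same quotient topology. One harmless slip: $g(\nu)$ and $g(-\nu)$ are at distance $2\ln\frac{1+\nu}{1-\nu}$ and the composition of the two rotations is a translation of length $4\ln\frac{1+\nu}{1-\nu}$, which is indeed the translation length of $\phi_\nu^4$, so your two factor-of-two errors cancel and the conclusion stands.
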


\begin{remark}
  Up to a hyperbolic horizontal translation along $\g_0$, we can fix
  $B_0=\0$ in the construction above.  Then the graph
  $u_\infty=u_\infty(\mu,R)$ converges as $\mu\to+\infty$ to the
  unique minimal graph $w$ over the geodesic triangle of vertices $\0,
  B_R,q_1=(0,1)$ with boundary values $0$ over
  $\overline{B_R\,\0}\cup\overline{\0,q_1}$ and $+\infty$ over
  $\overline{B_R\,q_1}$.  Such a limit graph produces, after
  successive rotations about the horizontal geodesics
  $\overline{B_R\,\0}\cup\overline{\0,q_1}$ and the vertical geodesic
  $\{B_R\}\times\R^+$ in its boundary, one of the ``horizontal
  helicoids'' ${\cal H}$ described by Pyo in~\cite{Pyo}.  Then the
  conjugate surfaces ${\cal M}(R,\mu)$ converge as $\mu\to+\infty$ to
  one of the ``horizontal catenoid'' constructed in~\cite{MoRo,Pyo}.
\end{remark}


\section{Minimal surfaces invariant by a subgroup of $\Isom(\H^2)$}
\label{sec:isomH2}

In this section, we construct some examples of minimal surfaces
invariant by a subgroup $G$ of the isometries of $\Isom(\H^2\times\R)$
that fix the vertical coordinate. We will say that such a $G$ is a
subgroup of the isometries of $\Isom(\H^2)$. In fact, the subgroups we
consider come from tilings of the hyperbolic plane. We will use some
notation that we introduce in Appendix~\ref{app:tiling}.

The horizontal slices are clearly invariant by any subgroup of the
isometries of $\Isom(\H^2)$. The first non-trivial example is the
following: We consider $n\ge 3$ and $\theta=\pi/n$.  From
Appendix~\ref{app:tiling}, there is $y\in\g_{\theta/2}$ such that the
polygon $\boP_y$ is a regular convex polygon in $\H^2$ with $2n$ edges
of length $2h_n$ and inner angle $\pi/2$ at the vertices (see
Appendix~\ref{app:tiling} for the definitions of $\boP_y$ and
$h_n$). On this polygon, there is a solution $u$ of \eqref{mse} with
boundary values $\pm\infty$ alternatively on each edge. The graph of
$u$ is a minimal surface bounded by $2n$ vertical lines over the
vertices of $\boP_y$. Since $\boP_y$ is the fundamental piece of a
colorable tiling of $\H^2$ (see Proposition~\ref{prop:appendix}) the
graph of $u$ can be extended by successive symmetries along its
boundary to a properly embedded minimal surface in
$\H^2\times\R$. This surface is invariant by the subgroup of
$\Isom(\H^2)$ generated by the symmetries with respect to the vertices
of the tiling.

We now construct other non-trivial examples of properly embedded
minimal surfaces invariant by a subgroup of the isometries of
$\Isom(\H^2)$.  The construction of these surfaces is similar to the
one for some of the most symmetric Karcher's Toroidal Halfplane Layers
in $\R^3$.

Fix $n\ge 3$ and $h>h_n$.  By Claim~\ref{claim:appendix} and
Proposition~\ref{app:tiling}, there exist $\ell<h_n$ and a convex
polygonal domain $\boP(n,h)\subset\H^2$ with $2n$ edges of lengths $h$
and $\ell$, disposed alternately, whose inner angles are $\pi/2$. Such
a domain $\boP(n,h)$ produces by successive rotations about its
vertices a colorable tiling of $\H^2$.

Consider the minimal graph $\Sigma$ over $\boP(n,h)$
with boundary values $0$ over the edges of length $h$
and $+\infty$ over the edges of length $\ell$. Such a graph exists,
by~\cite{NeRo}, and is unique.
    By uniqueness, $\Sigma$ is invariant by reflection symmetry across the
vertical geodesic planes passing
    through the origin of $\boP(n,h)$ and the middle points of the edges of
the polygon.
We rotate $\Sigma$ about the horizontal and vertical geodesics in its
boundary,
producing a properly embedded minimal surface $\cal M$ invariant by a
subgroup of the group of isometries of the tiling produced from $\boP(n,h)$
. $\cal M$ projects
vertically over the whole $\H^2$,
and contains all
the edges of the tiling coming from the edges of $\boP(n,h)$ of length $h$
(identifying them with the corresponding horizontal geodesics at height
zero),
and the vertical geodesics over the vertices of the tiling.

\begin{proposition}\label{prop:newTHL}
  For any $n\ge 2$ and any $h>h_n$, there exists a properly embedded
  minimal surface $\cal M$ invariant by the group of isometries of the
  tiling produced by the polygon $\boP(n,h)$ defined above.  The
  vertical projection of $\cal M$ is the whole $\H^2$ and the ends of
  $\cal M$ are asymptotic to the vertical geodesic planes over the
  edges of the tiling coming from the edges of $\boP(n,h)$ with length
  $\ell$.  Moreover, $\cal M$ contains all the edges of the tiling
  coming from the edges of length $h$ and the vertical geodesics over
  the vertices of the tiling.
\end{proposition}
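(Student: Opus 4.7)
The proof will formalize the construction outlined in the paragraph immediately preceding the statement. The plan is as follows.

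First, I would take as given the geometric input from Appendix~\ref{app:tiling}: by Claim~\ref{claim:appendix} and Proposition~\ref{prop:appendix}, for $n\ge 3$ and $h>h_n$ there exist $\ell<h_n$ and a right-angled convex $2n$-gon $\boP(n,h)\subset\H^2$ whose edges alternate in length between $h$ and $\ell$, and such that the successive rotations by $\pi$ about the vertices of $\boP(n,h)$ produce a $2$-colorable tiling of $\H^2$.

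Next, I would invoke the Jenkins--Serrin-type existence theorem of Nelli--Rosenberg~\cite{NeRo} to produce a unique solution $u$ of~\eqref{mse} on $\boP(n,h)$ with boundary values $0$ on the edges of length $h$ and $+\infty$ on the edges of length $\ell$; denote its graph by $\Sigma$. By uniqueness, $\Sigma$ inherits every symmetry of the underlying Dirichlet problem, and in particular it is invariant by reflection across each vertical geodesic plane through the center of $\boP(n,h)$ and either a vertex or a midpoint of an edge of $\boP(n,h)$.

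The main step is the extension of $\Sigma$ to the full surface $\cal M$. The boundary of $\Sigma$ in $\H^2\times\R$ consists of the horizontal geodesic arcs $e\times\{0\}$ for each $h$-edge $e$, together with the vertical half-lines $\{v\}\times\R^+$ above each vertex $v$ of $\boP(n,h)$ (the latter appear as the limit boundary segments where the boundary data jumps from $0$ to $+\infty$; their presence is a standard feature of Jenkins--Serrin graphs). By the Schwarz reflection principle, the rotation of $\Sigma$ by angle $\pi$ about any of these boundary arcs extends $\Sigma$ smoothly to a larger minimal surface. Rotation about a horizontal arc $e\times\{0\}$ produces the reflected piece through $\H^2\times\{0\}$, while rotation about a vertical line $\{v\}\times\R$ yields a graph over the tile adjacent through $v$, with the roles of $0$ and $+\infty$ interchanged on its boundary edges. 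The $2$-colorability of the tiling is exactly what guarantees that these rotations are globally consistent and never force a self-intersection; iterating produces a properly embedded minimal surface $\cal M$ in $\H^2\times\R$, invariant under the group generated by the $\pi$-rotations about the vertical geodesics over the vertices of the tiling, which coincides with the full group of isometries of the tiling.

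The remaining assertions then follow directly from the construction. The vertical projection of $\cal M$ is all of $\H^2$ because the orbit of $\boP(n,h)$ tiles $\H^2$; the horizontal geodesics coming from the $h$-edges and the vertical geodesics over the vertices of the tiling lie in $\cal M$, since they are the rotation axes used in the extension; and the ends of $\cal M$ are asymptotic to the vertical geodesic planes over the $\ell$-edges of the tiling, because $\Sigma$ itself is asymptotic to $e\times\R$ along each $\ell$-edge $e$. The main technical obstacle is verifying the divergence hypotheses of the Nelli--Rosenberg theorem for the alternating Dirichlet data on $\boP(n,h)$ (this is implicit in the appeal to~\cite{NeRo}); a secondary subtlety is the precise identification of the vertical boundary segments at the vertices of $\boP(n,h)$, which is what allows the Schwarz reflections to glue smoothly into a properly embedded surface.
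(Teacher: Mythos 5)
Your proposal follows the paper's construction essentially verbatim: the existence of the right-angled polygon $\boP(n,h)$ with alternating edge lengths $h$ and $\ell$ from Appendix~\ref{app:tiling}, the Nelli--Rosenberg graph over $\boP(n,h)$ with boundary data $0$ on the $h$-edges and $+\infty$ on the $\ell$-edges, and the extension by $\pi$-rotations about the horizontal geodesics and the vertical lines over the vertices, with the colorability of the tiling ensuring a consistent, properly embedded surface. One minor slip that does not affect the argument: the $\pi$-rotation about a vertical line over a vertex preserves heights, so the rotated graph again takes the value $0$ on its $h$-edges and $+\infty$ on its $\ell$-edges --- the roles of $0$ and $+\infty$ are not interchanged (they are negated only under the rotations about the horizontal geodesics, which is exactly what matches the two-coloring).
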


In the following subsections, we prove:
\begin{proposition}\label{prop:new}
  For any $n\ge 3$ and any $h>h_n$, there exists a properly embedded
  minimal surface $M$ invariant by the group of isometries of the
  tiling produced by the polygon $\boP(n,h)$.  $M$ projects vertically
  over the tiles in black and its ends are asymptotic to the vertical
  geodesic planes over the edges of the tiling coming from the edges
  of $\boP(n,h)$ of length $h$.  Moreover, $M$ is invariant by
  reflection symmetry across $\{t=0\}$ and contains the vertical
  geodesics over the vertices of the tiling.
\end{proposition}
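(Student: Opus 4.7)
As foreshadowed in the introduction, the plan is to solve a Plateau problem in $\H^2\times\R$, pass to the conjugate minimal surface, and then extend the resulting fundamental piece by the symmetries of the tiling produced by $\boP(n,h)$. Fix $\t=\pi/n$. For parameters $H>0$ and $\l\in(0,1)$ to be determined, take the minimal vertical graph $M_{H,\l}\subset\H^2\times\R$ of Subsection~\ref{subsec:M}, bounded by the four arcs $c_1=\g_0^+$, $c_2=\overline{(\0,0)(p_\l,0)}$, $c_3=\overline{(p_\l,0)(p_\l,H/2)}$ and the horizontal symmetry curve $c_4\subset\{t=H/2\}$. Let $F$ denote its conjugate minimal immersion.

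Using the correspondences recalled in Subsection~\ref{subsec:conj}, I would describe $\partial F$ precisely: the horizontal ambient geodesics $c_1,c_2$ become lines of curvature $c_1^*,c_2^*$ contained in two vertical geodesic planes $\Pi_1,\Pi_2$ along which $F$ is orthogonal; the vertical ambient geodesic $c_3$ becomes a horizontal line of curvature $c_3^*$ with horizontal normal, which we may place in $\{t=0\}$ after a vertical translation; and by the exchange of horizontal and vertical Schwarz symmetries, the horizontal symmetry curve $c_4$ becomes $c_4^*$ lying in a third vertical geodesic plane $\Pi_4$ along which $F$ is again orthogonal. Since conjugation is an intrinsic isometry preserving the angle function $\nu$, the corner angles are preserved; in particular, at the vertex $q_0^*$ corresponding to $(\0,0)$ (where $\nu=1$ and the tangent plane to $F$ is horizontal), $c_1^*$ and $c_2^*$ start out along horizontal directions making angle $\t$, so $\Pi_1$ and $\Pi_2$ meet along a common vertical axis through $q_0^*$ at angle $\t$. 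After an ambient isometry we fix $\Pi_1=\g_0\times\R$ and $\Pi_2=\g_\t\times\R$ with common axis $\{\0\}\times\R$.

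With $F$ in hand the plan is to assemble $M$ by successive symmetrizations. Since $\nu>0$ in the interior of $M_{H,\l}$ (Subsection~\ref{subsec:M}), the same holds on $F$, so $F$ is a vertical graph over a domain $D^*\subset\H^2$ whose boundary is bounded by $\pi(c_1^*)\subset\g_0$, $\pi(c_2^*)\subset\g_\t$, $\pi(c_3^*)$ and $\pi(c_4^*)\subset\pi(\Pi_4)$. Iteratively reflecting $F$ across $\Pi_1$ and $\Pi_2$ (these two reflections generate a dihedral group of order $2n$ which includes the $n$-fold rotation by $2\pi/n$ about $\{\0\}\times\R$) produces $\Sigma_+$: a vertical graph over a domain $\Omega\subset\H^2$ whose boundary decomposes into $2n$ arcs $A_1,B_1,\dots,A_n,B_n$, alternating between geodesic arcs $A_i$ arising from $\pi(\Pi_4)$ and its rotates (where the graph tends to $+\infty$) and concave arcs $B_j$ arising from $\pi(c_3^*)$ and its rotates (where the graph vanishes with vertical tangent). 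Reflecting $\Sigma_+$ across $\{t=0\}$ extends it to a vertical bigraph $\Sigma$, and applying the full tiling isometry group yields the complete surface $M$, which then projects onto the black tiles of the tiling. Embeddedness follows from the $Y_{\t/2}$-graph property of the Plateau solution (Claim~\ref{cl:uniqueness}) combined with the maximum principle applied to the translated/rotated copies.

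The principal obstacle is the \emph{period problem}: choosing $(H,\l)$ so that the convex hull $\widetilde\Omega$ of the vertices of $\Omega$ coincides with the prescribed polygon $\boP(n,h)$. The $\pi/2$ interior angles of $\widetilde\Omega$ are automatic, since they come from the right angles between $c_2$ and $c_3$ and between $c_3$ and $c_4$ in the Plateau disk (both preserved under conjugation). What must be tuned is the common length of the geodesic edges $A_i$, a continuous function $\ell_A=\ell_A(H,\l)$ of the parameters. Using the monotonicity of $\Sigma_{H,\l,\mu}$ in $\l$ and $\mu$ (Claim~\ref{cl:uniqueness}), together with the barriers $S_h$ of Appendix~\ref{app:barrier} and the limit surface $\Sigma_h$ of Subsection~\ref{subsec:vertical}, one would analyze the asymptotic behavior of $\ell_A$ as $H\to 0$, $H\to+\infty$, $\l\to 0$ and $\l\to 1$, and conclude by continuity that $\ell_A$ sweeps the entire interval $(h_n,+\infty)$. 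The hypothesis $h>h_n$ is thus the sharp threshold dictated by the geometry of right-angled $2n$-gons in $\H^2$, and for every such $h$ admissible parameters $(H,\l)$ exist, completing the construction.
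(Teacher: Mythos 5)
Your overall strategy — solve the Plateau problem of Subsection~\ref{subsec:M}, pass to the conjugate surface, extend by the symmetries of the tiling, and close a period by continuity — is the paper's strategy, but two of your key steps are wrong. First, you mis-identify the conjugate of $c_4$. The curve $c_4=M_{H,\l}\cap\{t=H/2\}$ is a horizontal line of curvature along which the normal is horizontal (the surface meets the slice orthogonally), and under conjugation such a curve becomes a \emph{vertical geodesic} contained in the surface: in the paper $c_4^*=\{a_3\}\times\R^+$. It is not a curvature line lying in a third mirror plane $\Pi_4$ met orthogonally. This is not a cosmetic slip: the geodesic sides $A_i$ of $\Omega$ on which the graph tends to $+\infty$ do not come from any boundary curve of the conjugate piece at all; they are the geodesics $\overline{a_1a_3}$ (of length $H/2$) over which the \emph{end} of $M_{H,\l}^*$ is asymptotic to a vertical plane. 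Your picture — a boundary curve in a vertical plane met orthogonally, over whose projection the graph blows up — is internally inconsistent, since orthogonal intersection with a vertical plane gives finite boundary values and mirror extension there. Moreover, the vertical geodesic $c_4^*$ over the vertex $a_3$ is precisely what makes $M$ contain the vertical lines over the vertices of the tiling and what allows the passage from one black tile to an adjacent one, by rotation by $\pi$ about those vertical lines; this is also why the colorability of the tiling is needed. With your identification this mechanism, and the last assertion of the proposition, disappear.

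Second, the period problem is inverted. The edge length is automatic: taking the Plateau height equal to $h$, the asymptotic intrinsic distance from $c_1$ to $c_4$ is $h/2$, conjugation preserves it, so $\overline{a_1a_3}$ has length $h/2$, and after reflection across $\g_0$ the corresponding polygon edge has length exactly $h$; tuning a second parameter $H$ is neither needed nor available (requiring edge length $h$ just forces $H=h$). What is \emph{not} automatic is the right angle of the polygon at the vertex $a_3$, i.e.\ the angle between $\overline{a_3a_1}$ and $\overline{a_3a_4}$, where $a_4$ is the foot of the perpendicular from $a_3$ to $\g_\t$. Neither of these segments is a boundary arc of the conjugate surface, so the corner angles preserved by conjugation (at $A_2^*$ and $A_3^*$) say nothing about it; declaring the $\pi/2$ angles ``automatic'' skips the entire period problem. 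The paper solves it by an intermediate value argument in $\l$ alone: as $\l\to1$ the conjugate pieces converge to $M_h^*$, which is invariant under the rotation by $\pi$ about $\g_{\t/2}$, and the hypothesis $h>h_n$ forces the angle at $a_3$ to be strictly less than $\pi/2$ for $\l$ near $1$, while for $\l$ slightly above the largest value $\l_0$ with $a_3\in\g_\t$ the angle exceeds $\pi/2$; continuity yields $\l_1$ with angle exactly $\pi/2$. So $h>h_n$ enters through this angle comparison, not as a range condition on an edge-length function $\ell_A$. (Your one-line embeddedness claim also bypasses the real work — locating $a_2$, $a_3$ and $c_3^*$ via foliations by vertical geodesic planes to show $M_{h,\l}^*$ is a vertical graph over $\Ome$ — but that is secondary to the two points above.)
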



\subsection{The conjugate minimal surfaces $M_{h,\l}^*$}

Let $n\ge 3$ be an integer and $\theta=\pi/n$. We consider $h>0$ and
$\l\in(0,1)$. In Subsection~\ref{subsec:M}, we have constructed the
minimal surface $M_{h,\l}$ which is bounded by the union of four
curves: $c_1$, $c_2$, $c_3$ and $c_4$.

Let $M_{h,\l}^*$ be the conjugate minimal surface of $M_{h,\l}$. The
aim of this subsection is to describe $M_{h,\l}^*$ and prove that it
is embedded.  We notice that $M_{h,\l}^*$ is well defined up to an
isometry of $\H^2\times\R$. In the following, we will fix this
isometry by making some hypotheses on $M_{h,\l}^*$.

The vertical coordinate $h^*$ of $M_{h,\l}^*$ is defined on $\Omega_0$
by a primitive of the closed $1$-form $\omega^*$ defined
in~\eqref{def:omega}.  Up to a vertical translation, we can assume
$h^*(p_\l)=0$. Because of the definition of $\omega^*$ and since
$M_{h,\l}\subset \H^2\times[0,h/2]$, $h^*$ increases along $\pi(c_4)$
from $p_\l$ to $q_1$, along $c_2$ from $p_\l$ to $\0$ and along $c_1$
from $\0$ to $q_1$.  Thus $h^*$ is non-negative.

The surface $M_{h,\l}^*$ is bounded by $c_1^*,c_2^*,c_3^*,c_4^*$,
where each $c_i^*$ corresponds by conjugation to $c_i$. Let us give a
first description of these curves (see Figure~\ref{fig:Mhl*}):
\begin{itemize}
\item $c_1^*$ is a vertical geodesic curvature line lying on a
  vertical geodesic plane~$\Pi_1$, with infinite length and endpoint
  $A_0^*$, the conjugate point to $A_0$. We can assume that $A_0^*$ is
  the point $(\0,h^*(\0))$ and that $\Pi_1$ is the plane
  $\g_0\times\R$. The unit tangent vector to $c_1^*$ at $A_0^*$ is
  horizontal and we assume it points to $\{y\ge 0\}$.  The angle
  function $\nu^*$ is positive along $c_1^*$ (as this was the case for
  the angle function $\nu$ of $M_{h,\l}$ along $c_1$) and the height
  function increases along $c_1^*$ when starting from $A_0^*$. In the
  Euclidean plane $\Pi_1$, $c_1^*$ is then the graph of a convex
  increasing function over a part $[\0,a_1)$ of $\g_0^+$ ($a_1$ could
  be a priori in the asymptotic boundary of $\H^2$).
\item $c_2^*$ is a vertical geodesic curvature line of length
  $\ln\left(\frac{1+\l}{1-\l}\right)$ lying on a vertical geodesic
  plane~$\Pi_2$. Since, the angle between $c_1$ and $c_2$ is $\t$ at
  $A_0$, we get that the angle between $\Pi_1$ and $\Pi_2$ is $\theta$
  ($M_{h,\l}^*$ is horizontal at $A_0^*$ and isometric to
  $M_{h,\l}$). We take $\Pi_2$ the vertical plane
  $\pi^{-1}(\g_\t)$. Now $M_{h,\l}^*$ is uniquely defined.  Starting
  from $A_0^*$, the height function decreases along $c_2^*$ from
  $h^*(\0)$ to $h^*(p_\l)=0$. In the Euclidean plane $\Pi_2$, $c_2^*$
  is then the graph of a concave decreasing function over a part of
  the geodesic $\gamma_\t^+$. We denote by $A_2^*$ the endpoint of
  $c_2^*$ which is different from $A_0^*$. We have $A_2^*=(a_2,0)$,
  with $a_2\in\g_\t^+$.
\item $c_3^*$ is a horizontal geodesic curvature line of length $h/2$
  at height zero, going from $A_2^*$ to a point $A_3^*=(a_3,0)$.  The
  unit tangent vector to $c_3^*$ at $A_2^*$ is orthogonal to $\Pi_2$
  and points into the side of $\Pi_2$ that contains $c_1^*$. As a
  curve of $\H^2\times\{0\}$, the geodesic curvature of $c_3^*$ never
  vanishes. In fact, since the normal vector field of $M_{h,\l}$
  rotates less than $\pi$ along $c_3$, the total geodesic curvature of
  $\pi(c_3^*)\subset \H^2$ is less than $\pi$. This implies that
  $\pi(c_3^*)$ and $c_3^*$ are embedded and $\pi(c_3^*)$ does not
  intersect $\overline{\0a_2}$.
\item $c_4^*$ is the half vertical geodesic line $\{a_3\}\times\R^+$.
\end{itemize}

\begin{figure}
  \begin{center}
    \resizebox{0.9\linewidth}{!}{\input{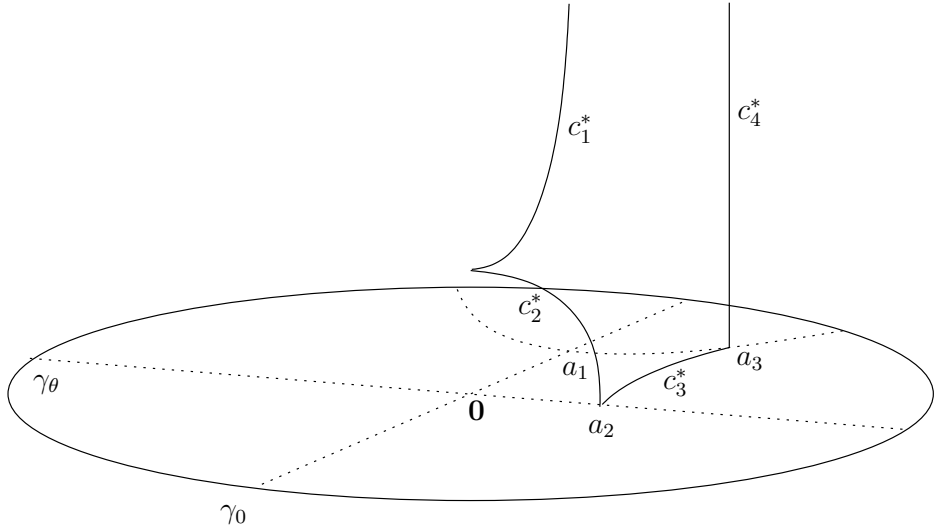}}
    \caption{The boundary of $M_{h,\l}^*$\label{fig:Mhl*}}
  \end{center}
\end{figure}

We know that the distance between $c_1$ and $c_4$ is uniformly bounded
and the surface is isometric to its conjugate, so the same is true for
$c_1^*$ and $c_4^*$. Thus the distance between $a_1$ and $a_3$ is
bounded. This is, $a_1$ is in $\H^2$, not in $\partial_\infty\H^2$.
Then, seeing in the Euclidean plane $\Pi_1$, $c_1^*$ is the graph of a
convex increasing function over a part $[\0,a_1)$ of $\g_0^+$ with
limit $+\infty$ at $a_1$.

Because of the asymptotic behaviour of $M_{h,\l}$ near $q_1$,
$M_{h,\l}^*$ is asymptotic to $\overline{a_1a_3}\times\R$, and the
geodesic $\overline{a_1a_3}$ has length $h/2$. Besides, since the
normal vector to $M_{h,\l}^*$ lies in $\Pi_1$ along $c_1^*$, the
geodesic $\overline{a_1a_3}$ is orthogonal to $\gamma_0$ at $a_1$, and
$a_3$ lies in $\{x\ge 0\}$.

Let $(\phi_l)_{l\in(-1,1)}$ be the flow given by $Y_0$. Let $\gamma$
be the complete geodesic of $\H^2$ that contains $a_1$ and $a_3$.  We
know that $\g$ is orthogonal to $\g_0$.  We consider the foliation of
$\H^2\times\R$ by the vertical geodesic planes
$\phi_l(\gamma\times\R)$. Since every point in $M_{h,\l}^*$ is at a
bounded distance from its boundary, for $l$ close to $1$ we have
$\phi_l(\gamma\times\R)\cap M_{h,\l}^*=\emptyset$. Let $l$ decrease
until a first contact point for $l=l_0$. Since $M_{h,\l}^*$ is
asymptotic to $\overline{a_1a_3}\times\R$, either $l_0=0$ or $l_0>0$.
Let us assume $l_0>0$ and reach a contradiction. We have two cases:
the first contact point is contained on $c_3^*$ or it coincides with
$A_2^*$. In the first case we get a contradiction using the maximum
principle, since the normal vector field of the surface is horizontal
along $c_3^*$ and $\phi_{l_0}(\gamma\times\R)$ is on one side of
$M_{h,\l}^*$.  Let us now assume that the first contact point is
$A_2^*$. The unit tangent vector to $c_3^*$ points into $\cup_{l\geq
  l_0}\phi_l(\gamma\times\R)$ at $A_2^*$, this contradicts that we
have the first contact point for $l=l_0$. So $M_{h,\l}^*$ never
intersects $\phi_l(\gamma\times\R)$ until $l=0$. This implies that
$a_2$ and $c_3^*$ are in the half hyperbolic space bounded by $\gamma$
which contains $\0$.

Let $\gamma'$ be the geodesic passing through $a_3$ and orthogonal to
$\gamma_\theta$. Using a similar argument as above with the
corresponding foliation by vertical geodesic planes, we can prove
that:
\begin{itemize}
\item $c_3^*$ is in the half hyperbolic plane $\{x\ge 0\}$;
\item $c_3^*$ and $a_2$ are in the half hyperbolic plane bounded by
  $\gamma'$ which contains~$\0$.
\end{itemize}
For the second item, we need to extend $M_{h,\l}^*$ by symmetry along
$c_2^*$.

Let $\Ome$ be the domain of $\H^2$ bounded by $\overline{a_3 a_1}$,
$\overline{a_1 \0}$, $\overline{\0 a_2}$ and $c_3^*$. Since the angle
function $\nu^*$ never vanishes outside $c_3^*\cup c_4^*$, we conclude
$M_{h,\l}^*\subset \Ome\times\R$. In fact, since $A_0^*$ is the only
point in $M_{h,\l}^*$ that projects on $\0$, $M_{h,\l}^*$ is a
vertical graph over $\Ome$. This implies that $M_{h,\l}^*$ is
embedded.


\subsection{Symmetry and period problem}

We recall that $n \ge 3$.  From now on, we assume that $h>h_n$, where
$h_n$ is defined in Appendix~\ref{app:tiling} ($h_n$ is the length of
the edges of the regular geodesic polygon with $2n$ edges with
interior angles $\pi/2$).  We want to find a value for the parameter
$\l$ for which we can construct an embedded minimal surface extending
$M_{h,\l}^*$ by symmetry along its boundary.

Let us consider the surface $\Sigma_{h,\l}$ described in
Subsection~\ref{subsec:M}. The same argument as in this subsection
proves that $\Sigma_{h,\l}$ converges when $\l\rightarrow 1$. By
uniqueness, we get that this limit minimal surface must be $\Sigma_h$,
described in Subsection~\ref{subsec:vertical} (see
  Figure~\ref{fig:plateau2}). Moreover, the surfaces $\Sigma_{h,\l}$
depend continuously of the parameter $\l$. Thus $a_1$, $a_2$ and $a_3$
depend continuously of $\l$ as well.

We define $M_h=\Sigma_h\cap \{0\le t\le h/2\}$, and $M_h^*$ its
conjugate surface.  As both $M_{h,\l}$ and $M_{h,\l}^*$ are vertical
minimal graphs and $M_{h,\l}$ converges to $M_h$ as $\l\to 1$, we can
conclude as in~\cite{MoRo} that the graphs $M_{h,\l}^*$ converge to
$M_h^*$ when $\l\to 1$.

We translate vertically $M_h^*$ so that $A_0^*=(\0,0)$. The curve
$M_h\cap \{t=h/2\}$ corresponds by conjugation to a vertical geodesic
$\{a'\}\times\R$, where $a'$ is the limit of the points $a_3$ when
$\l\to 1$. Since $M_h$ is invariant by the reflection symmetric with
respect to the plane $\gamma_{\t/2}\times\R$, then $M_h^*$ is
invariant by the rotation of angle $\pi$ about the geodesic
$\g_{\t/2}$, contained in $M_h^*$. Therefore $a'\in \g_{\t/2}$ and
this implies that, for $\l$ sufficiently close to $1$, $a_3$ lies in
the hyperbolic angular sector $T_\t=\{(r\sin u,r\cos u)\in\H^2,
r\in[0,1),u\in[0,\t]\}$.

Let $a_4$ be the orthogonal projection of $a_3$ over $\gamma_\t$. As
$\l$ goes to $1$, $a_3$ goes to $a'$ and $a_4$ goes to the projection
$a'_\t$ of $a'$. We recall that $a_1$ is the orthogonal projection of
$a_3$ on $\gamma_0$ so $a_1$ goes to the projection $a'_0$ of $a'$ on
$\g_0$. Since $h>h_n$ and $M_h^*$ (for $\lambda=1$) is invariant by
the rotation of angle $\pi$ about $\g_{\t/2}$, we deduce that the
angle between $\overline{a'a'_0}$ and $\overline{a'a'_\t}$ is strictly
smaller than $\pi/2$. Thus the angle between $\overline{a_3a_1}$ and
$\overline{a_3a_4}$ is strictly less than $\pi/2$, for $\l$ close
to~1.

\medskip

Let us observe what happens when $\l$ is close to $0$. By
construction, $a_3$ is at distance $h/2$ from the geodesic $\g_0$
(i.e. $a_3$ lies on $d(h/2)$, the equidistant curve of $\g_0$ at
distance $h/2$). Besides the distance from $\0$ to $a_3$ is less than
the sum of the lengths of $c_2^*$ and $c_3^*$. So this distance is
less than $\ln\left(\frac{1+\l}{1-\l}\right)+h/2$. So for $\l$ small,
the distance between $\0$ and $a_3$ is close to $h/2$. This implies
that $a_3$ lies outside the angular sector $T_\t$ when $\l$ is close
to zero.

By continuity, there is a largest $\l$, denoted by $\l_0$, such that
$a_3\in \gamma_\theta$. In particular, $a_3$ is contained in $T_\t$
for any $\l>\l_0$.  For $\l>\l_0$ close to $\l_0$, $a_3\in T_\t$ is
close to $\g_\t$. So the angle between $\overline{a_3a_1}$ and
$\overline{a_3a_4}$ is bigger than $\pi/2$. A continuation argument
says that there exists $\l_1\in(\l_0,1)$ such that $a_3\in T_\t$ and
the angle between $\overline{a_3a_1}$ and $\overline{a_3a_4}$ is equal
to $\pi/2$ (see the proof of Claim~\ref{claim:appendix} for a similar
argument). This value $\l_1$ is the one we look for; so from now on,
we fix $\l=\l_1$.

The domain $\Ome$ is included in the convex polygonal domain of
vertices $\0$, $a_1$, $a_3$ and $a_4$.  We denote by
$\widetilde{\Ome}$ the domain obtained from $\Ome$ by reflection with
respect to the geodesics $\g_0$ and $\g_\t$ successively. The boundary
of $\widetilde{\Ome}$ has $2n$ vertices which are the images of $a_3$
and is composed of $n$ geodesic arcs corresponding to
$\overline{a_1a_3}$ and $n$ concave arcs corresponding to
$c_3^*$. This domain is included in the convex polygonal domain
$\boP$, which is constructed by the same symmetries from the geodesic
polygon of vertices $\0, a_1, a_3, a_4$ (this polygon corresponds to
the polygon $\boP_{a_3}$ in Appendix~\ref{app:tiling}). $\boP$ has
$2n$ vertices coming from $a_3$, all of them with interior angle
$\pi/2$; and its edges have lengths $h$ and $b$, alternatively, where
$b$ is twice the length of the geodesic arc $\overline{a_3a_4}$. Such
a polygon $\boP$ is then the fundamental piece of a colorable tiling
of $\H^2$ (see Proposition~\ref{prop:appendix}).

Let us now extend $M_{h,\l_1}^*$ by successive reflection symmetries
with respect to the planes $\g_0\times\R$ and $\g_\t\times\R$. We get
a minimal surface $\widetilde{M}$ which is a vertical graph over
$\widetilde{\Ome}$ with value $0$ along the concave arcs and $+\infty$
on the geodesic arcs.  Moreover, this surface is in $\{t\ge 0\}$ and
has all the symmetries of the polygonal domain $\boP$. By reflection
symmetry with respect to the horizontal slice $\{t=0\}$, we get an
embedded minimal surface whose boundary consists of $2n$ vertical
geodesic lines passing through the vertices of $\boP$. Such a surface
is topologically a sphere minus $n$ points.

From Proposition~\ref{prop:appendix}, $\boP$ is the fundamental piece
of a hyperbolic colorable tiling. Thus we can extend the surface by
successive reflection symmetries along the vertical geodesics
contained in its boundary, getting a properly embedded minimal surface
$M$ which is invariant by the group of symmetries generated by the
rotation around the vertices of the tiling. Moreover the surface
projects only on tiles in black of $\boP$. This proves
Proposition~\ref{prop:new}.

\begin{remark}
  If $n=2$, the above contruction can be done without selecting the
  value of the parameter $\lambda$. Thus we get the surface
  $\widetilde{M}$ that can be extended by symmetry with respect to
  $\{t=0\}$ to get a minimal surface whose boundary consists of $4$
  vertical geodesic lines. This surface is topologically an
  annulus. So this surface is a solution to the following Plateau
  problem: finding a minimal annulus bounded by four vertical geodesic
  vertical lines. In this sense, it is very similar to the Karcher
  saddle \cite{Kar} of $\R^3$. But in our situation it can't be
  extended by symmetry along its boundary into an embedded minimal
  surface of $\H^2\times\R$.
\end{remark}



\appendix

\section{Geodesic polygonal domains with right angles}\label{app:tiling}

In this appendix, we give some facts about the tilings of the
hyperbolic plane that we consider in the paper.

Let $n\geq 3$ be an integer and define $\t=\pi/n$. Let $y_l$ be the
point $(l\sin(\t/2),l \cos(\t/2))$ in $\H^2$, for $0<l<1$. Rotating
$y_l$ around $\0$ by $k\t$ ($k=1,\cdots,2n-1$), we get the $2n$
vertices of a regular convex geodesic polygon in $\H^2$. We denote by
$h$ the length of one of its $2n$ edges. $h$ is an increasing function
of $l$. When $l$ varies from $0$ to $1$, the interior angle of the
polygon at $y_l$ decreases from $\pi-\theta$ to $0$. Thus there is one
value of $l$ such that this angle is $\pi/2$. We denote by $h_n$ the
associated value of $h$.

Let $y$ be in $T_\t$. Considering the successive image of $y$ by the
reflections with respect to $\gamma_{k\t}$ (($k=1,\cdots,2n$), we
construct the $2n$ vertices of a convex polygon whose edges has
alternative lengths $a_y$ and $b_y$, where $a_y/2$ is the distance
from $y$ to $\gamma_0$ and $b_y/2$ the one to $\g_\t$. We denote by
$\boP_y$ this polygon and by $\alpha_y$ the interior angle of $\boP_y$
at the vertex $y$ (the angle is the same at every vertex).

\begin{claim}\label{claim:appendix}
  For any $a\ge h_n$, there is $y\in T_\t$ such that $a_y=a$ and
  $\alpha_y=\pi/2$.
\end{claim}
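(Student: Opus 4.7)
Fix $a\ge h_n$ and consider the arc $E_a := T_\t\cap d(a/2)$, i.e.\ the set of $y\in T_\t$ with $a_y=a$. It has one endpoint $y_0 := E_a\cap\g_\t$ (where $b_{y_0}=0$) and runs through the bisector point $y_b := E_a\cap\g_{\t/2}$ (where $a_{y_b}=b_{y_b}=a$) out to $\partial_\infty\H^2$. The strategy is to apply the intermediate value theorem to the continuous function $y\mapsto\alpha_y$ on $E_a$. By the dihedral symmetry of $\boP_y$, all $2n$ interior angles coincide with $\alpha_y$, so Gauss--Bonnet reads
\[
2n\,\alpha_y \;=\; (2n-2)\pi - \mathrm{Area}(\boP_y).
\]

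At $y_b$ the polygon $\boP_{y_b}$ is the regular geodesic $2n$-gon of side length $a$. For $a=h_n$ we have $\alpha_{y_b}=\pi/2$ by the very definition of $h_n$, so $y_b$ itself already witnesses the claim. For $a>h_n$, the area of a regular geodesic $2n$-gon is a strictly increasing function of its side length (e.g.\ because one can place isometric copies concentrically so that the smaller is contained in the larger), hence $\mathrm{Area}(\boP_{y_b})>(n-2)\pi$ (the area at $a=h_n$) and consequently $\alpha_{y_b}<\pi/2$. At the other end, as $y\to y_0$ along $E_a$, the $n$ edges of length $b_y$ shrink to points, pairs of adjacent vertices of $\boP_y$ coalesce, and $\boP_y$ converges in Hausdorff distance (and therefore in area) to the regular geodesic $n$-gon $\boP_n$ whose vertices are the images of $y_0$ under the rotations of angle $2k\t$ about $\0$, with common side length $a$. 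Writing $\alpha_n\in(0,(n-2)\pi/n)$ for its interior angle, Gauss--Bonnet on $\boP_n$ gives $\mathrm{Area}(\boP_n)=(n-2)\pi-n\alpha_n$; substituting into the formula above yields
\[
\lim_{y\to y_0}\alpha_y \;=\; \frac{\pi}{2}+\frac{\alpha_n}{2}\;>\;\frac{\pi}{2},
\]
the strict inequality holding because $n\ge 3$ and $a<\infty$ force $\alpha_n>0$.

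For $a=h_n$ the proof is complete with $y=y_b$. For $a>h_n$, the function $\alpha_y$ is continuous on $E_a\setminus\{y_0\}$ and extends continuously to $y_0$ with value $\tfrac{\pi}{2}+\tfrac{\alpha_n}{2}>\pi/2$; since $\alpha_{y_b}<\pi/2$, the intermediate value theorem on the compact subarc of $E_a$ joining $y_0$ to $y_b$ produces an interior $y\in T_\t$ with $\alpha_y=\pi/2$. The only mildly delicate point is the continuity of $\mathrm{Area}(\boP_y)$ at the degenerate limit $y=y_0$, but this is immediate from the continuous dependence of the vertices of $\boP_y$ on $y$ and the convergence of the corresponding polygonal regions.
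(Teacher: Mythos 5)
Your proposal is correct and follows essentially the same route as the paper: both arguments move $y$ along the equidistant arc $d(a/2)\cap T_\t$ from $\g_{\t/2}$ towards $\g_\t$ (so that $a_y=a$ throughout) and apply the intermediate value theorem to the continuous function $y\mapsto\alpha_y$, using $\alpha_y\le\pi/2$ at the bisector point and $\alpha_y>\pi/2$ near $\g_\t$. The only difference is that you justify these two endpoint estimates explicitly via Gauss--Bonnet (area monotonicity of regular $2n$-gons, and the degenerate limit $\alpha_y\to\pi/2+\alpha_n/2$ as the short edges collapse), steps the paper simply asserts using the monotonicity stated in the definition of $h_n$.
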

\begin{proof}
  Let $d(a/2)$ be the equidistant curve to $\gamma_0$ at distance
  $a/2$ in $\{x\ge 0\}$. Let $y$ be on the part of $d(a/2)$ between
  $\gamma_{\t/2}$ and $\gamma_\t$. Then $a_y=a$. If
  $y\in\gamma_{\t/2}$, $\boP_y$ is a regular convex polygon
  ($a_y=b_y$) and $\alpha_y\le \pi/2$, since $a\ge h_n$. For $y$ close
  to $\g_\t$, $\alpha_y>\pi/2$. By continuity, there is $y$ such that
  $\alpha_y=\pi/2$.
\end{proof}

\begin{proposition}\label{prop:appendix}
  Let $y\in T_\t$ such that $\alpha_y=\pi/2$. Then $\boP_y$ is the
  fundamental piece of a tiling of $\H^2$. This tiling is given by
  considering the successive images of $\boP_y$ by reflection with
  respect to its edges. Moreover, this tiling is colorable i.e. we can
  associate to any tile a color (black or white) such that two tiles
  having a common edges do not have the same color.
\end{proposition}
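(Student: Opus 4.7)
The plan is to apply Poincar\'e's polygon theorem for reflection groups. Let $\G\leq\Isom(\H^2)$ be the subgroup generated by the reflections $r_1,\ldots,r_{2n}$ across the $2n$ edges of $\boP_y$. Since $\boP_y$ is a convex geodesic polygon whose interior angle at every vertex equals $\pi/2 = \pi/m$ with $m=2$, Poincar\'e's theorem yields that $\G$ is discrete, that $\boP_y$ is a fundamental domain for the action of $\G$ on $\H^2$, that the orbit $\{g\boP_y : g\in\G\}$ tiles $\H^2$, and that $\G$ admits the Coxeter presentation
\[
\G = \langle r_1,\ldots,r_{2n} \mid r_i^2,\ (r_i r_{i+1})^2 \rangle \qquad (\text{indices mod } 2n).
\]
The only hypothesis needing verification is the cycle condition at each vertex $v$ of $\boP_y$: the two reflections across the edges incident to $v$ generate a dihedral group of order $4$, whose four translates of $\boP_y$ fit around $v$ with angles summing to $4\cdot(\pi/2)=2\pi$. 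The edge-pairing condition is automatic since each $r_i$ is an involution pairing an edge with itself.

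For the colorability, I would define a parity map $\eps\colon \G \to \Z/2\Z$ by $\eps(r_i)=1$ on every generator. Every relator in the above presentation has even length in the generators (length $2$ for $r_i^2$ and length $4$ for $(r_i r_{i+1})^2$), so $\eps$ descends to a well-defined group homomorphism. Declare a tile $g\boP_y$ \emph{black} if $\eps(g)=0$ and \emph{white} otherwise. Two tiles sharing a common edge differ by exactly one reflection across that edge, hence receive opposite colors; this is precisely the required $2$-coloring.

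The main obstacle is the verification of the cycle condition in Poincar\'e's theorem, but under the hypothesis $\alpha_y = \pi/2$ it becomes immediate: every vertex angle takes the critical form $\pi/m_v$ with $m_v=2$, so the local picture closes up after exactly $4$ reflections around each vertex. Once this is in place, both the existence of the tiling and its $2$-colorability follow as direct consequences of the Coxeter structure of $\G$.
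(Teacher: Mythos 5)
Your proposal is correct and follows essentially the same route as the paper, which simply invokes Poincar\'e's polyhedron theorem (citing Maskit) since all interior angles of $\boP_y$ equal $\pi/2$. Your additional details --- the verification of the cycle condition via the angle $\pi/2=\pi/m$ with $m=2$, and the $2$-coloring obtained from the parity homomorphism $\eps\colon\G\to\Z/2\Z$ (equivalently, the orientation character, since every generator is a reflection) --- are exactly what the paper leaves implicit, and they are sound.
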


For such a tiling, every vertex lies in four tiles: two are black and
two are white. Two tiles of the same color with a common vertex are
exchanged by the symmetry around this vertex.
Proposition~\ref{prop:appendix} is a consequence of Poincar\'e's
polyhedron Theorem \cite{Mas}.


\section{Some interesting minimal surfaces}\label{app:barrier}

In this appendix, we recall some known minimal surfaces in
$\H^2\times\R$ that we used in the paper.

Let us consider the half-space model for the hyperbolic plane :
$\H^2=\{(x_1,x_2)\in\R\times\R^+\}$ with the hyperbolic metric
$g=\frac{1}{x_2^2}(dx_1^2+dx_2^2)$.

On $\{x_1>0\}$, the function $v(x_1,x_2)=\log(
\frac{\sqrt{x_1^2+x_2^2}+x_2}{x_1})$ is a solution to \eqref{mse}. Its
graph is then a minimal surface in $\H^2\times\R$. On the boundary of
$\{x_1>0\}$, $v$ takes the value $+\infty$ on the geodesic line
$\{x_1=0\}$ and takes value $0$ on the asymptotic boundary of
$\{x_1>0\}$. This solution was discovered independently by U.~Abresch
and R.~Sa~Earp. This surface is used in
Subsection~\ref{subsec:horizontal}

On the whole $\H^2$, another solution to \eqref{mse} is given by
$u_a(x_1,x_2)=a\log (x_1^2+x_2^2)$. This solution is invariant by the
$Z$-flow, for $Z$ normal to $\{x_1=0\}$. In fact the graph of $u_a$ is
a minimal surface foliated by horizontal geodesics in $\H^2\times\R$
normal to $\{x_1=0\}\times\R$. Adding a constant $c$ to $u_a$, we
create a foliation of $\H^2\times\R$ by such surfaces. When $a$ varies
in $\R$, we get a family of minimal surfaces which are similar to
planes in $\R^3$.  Moreover, for any non vertical tangent plane at
$(0,1,0)$ which is tangent to $Z$, one surface in this family is
tangent to this tangent plane. In order to have the complete family,
we can add the vertical minimal plane
$\{x_1^2+x_2^2=1\}\times\R$. These surfaces are the $P$ surfaces used
in the proof of Claim~\ref{claim:Zgraph}.

If we look for solutions of \eqref{mse} of the form
$u(x_1,x_2)=f(x_1/x_2)$, we obtain solutions which are invariant by a
translation along the geodesic $\{x_1=0\}$. The above solution $v$ is
one such solution. In fact, for any $h>\pi$, there is $d_h>0$ and a
function $f_h$ which is defined on $[d_h,+\infty)$ such that
$u_h(x_1,x_2)=f_h(x_1/x_2)$ is a solution to \eqref{mse} (see
\cite{SaE,MaRoRo}). This function $f_h$ is a decreasing function with
$f_h(d_h)=h/2$ and $\lim_{+\infty} f_h=0$ and $\lim_{d_h}
f_h'=-\infty$.  The function $u_h$ is then defined on the set of
points at distance larger than $d_h$ from $\{x_1=0\}$ and has boundary
value $h/2$ on the equidistant and $0$ on the asymptotic
boundary. When $h\rightarrow +\infty$, $u_h$ converge to the above
solution $v$. The graph of $u_h$ is a minimal surface inside $\{0<t\le
h/2\}$ which is foliated by horizontal equidistant lines to
$\{x_1=0\}\times\R$ and is vertical along its boundary. Then this
graph can be extended by symmetry with respect to $\{t=h/2\}$ to a
complete minimal surface $S_h$ which is a vertical bigraph, included
in $\{0<t<h\}$, foliated by horizontal equidistant lines to
$\{x_1=0\}\times\R$. Moreover, the supremum of the vertical gap on
$S_h$ is $h$. The surfaces $S_h$ are used in
Subsections~\ref{subsec:scherk2p} and \ref{subsec:horiztorus} as
barriers in our construction.


\section{Alexandrov reflection}
\label{app:alexandrov}

In Subsection~\ref{subsec:scherk2p}, we construct a minimal surface
$\Sigma_\infty$ as the limit of surfaces $\Sigma_R$. These surfaces
$\Sigma_R$ are minimal disks bounded by a Jordan curve $\G_R$. We say
that Alexandrov reflection technique can be applied with respect to
horizontal slices to prove that $\Sigma_R$ is a vertical bigraph with
respect to $\{t=h/2\}$. Since there are two vertical arcs in $\G_R$,
we need to explain how the classical Alexandrov reflection technique
works along these vertical edges.

\medskip

In order to lighten the notation, we put $\Sigma=\Sigma_R$ and
$\G=\G_R$.  For $l\in[0,h]$, we define $\Pi_l$ the horizontal slice
$\{t=l\}$. We denote by $P_l$ and $Q_l$ the points in the vertical
edges of $\G$ at height $l$ (since the arguments work the same for
both points in the sequel, we will assume that there is only one). Let
$\Sigma_l=(\Sigma\cap \pi_l)\setminus \{P_l,Q_l\}$. We also define
$\Sigma_l^+$ (resp.  $\Sigma_l^-$) the part of $\Sigma$ above
(resp. below) $\Pi_l$ minus its boundary. Finally we denote by
$\Sigma_l^{+*}$ and $\Sigma_l^{-*}$ the symmetric of $\Sigma_l^+$ and
$\Sigma_l^-$ by $\Pi_l$.

\medskip

The main step of the Alexandrov reflection technique is to prove that,
for any $l\in (h/2,h]$, $\Sigma_l^-\cap \Sigma_l^{+*}= \emptyset$ and
$\Sigma$ is never vertical along $\Sigma_l$.

The property is true for $l=h$ since $\Sigma_h^{+*}=\emptyset$ and
$\Sigma$ is inside the convex hull of its boundary.

We notice that for any $l\in (h/2,h)$, if $\Sigma_l^-\cap
\Sigma_l^{+*}= \emptyset$ is proved, then $\Sigma$ is never vertical
along $\Sigma_l$ follows easily.

Now we consider $l_0\in (h/2,h]$ such that the property is satisfied
for any $l\ge l_0$. Let us assume that there exists a sequence of
$l_k<l_0$ with $l_k \rightarrow l_0$ and, for any $k$, there is $p_k\in
\Sigma_{l_k}^-\cap \Sigma_{l_k}^{+*}$.

Since $\Sigma_{l_0}^-\cap \Sigma_{l_0}^{+*}=\emptyset$, the limit
$p_\infty$ of $p_k$ is either in $\Sigma_{l_0}$ or in the vertical
edge. Since $\Sigma$ is not vertical along $\Sigma_{l_0}$,
$p_\infty\notin \Sigma_{l_0}$. So $p_\infty$ is in the vertical
edge. Since $\Sigma_{l_0}^-\cap \Sigma_{l_0}^{+*}=\emptyset$, the
tangent space to $\Sigma_{l_0}^-$ and $\Sigma_{l_0}^{+*}$ are
different for any point in the the vertical edge except at $P_{l_0}$
so the only possible limit is $p_\infty=P_{l_0}$.

Let us first consider the case $l_0<h$, and let $(x,y,z)$ be an
orthogonal coordinate system at $P_{l_0}$ such that $(x,y)$ are
euclidean coordinates in the vertical plane tangent to $\Sigma$ at
$P_{l_0}$, where $\partial_x$ is a vertical down pointing vector field
and $\partial_y$ is a horizontal vector field.  $\Sigma$ is then
locally the graph of a function $z=w(x,y)$ over $\{y\ge 0\}$. $w$
vanishes on $\{y=0\}$ and has vanishing differential at the origin. We
notice that $\{z=0\}$ is a minimal surface thus from the proof of
Theorem 5.3 in \cite{CoMi2}, $w$ can be written in the following way:
$$
w(x,y)=p(x,y)+q(x,y) ,
$$
where $p$ is a homogeneous harmonic polynomial of degree $d$ and $q$
satisfies
$$
|q(X)|+|X||\nabla q(X)|+\cdots+|X|^d|\nabla^dq(X)|\le C|X|^{d+1} .
$$

Since $\Sigma_{l_0}^-\cap \Sigma_{l_0}^{+*}=\emptyset$, then
$w(x,y)-w(-x,y)$ has a sign for any $|(x,y)|<\eps$ with $x\neq 0$ and
$y\neq 0$. We assume that the coordinate $z$ is chosen such that this
sign is $+$. Thus $0\le w(x,y)-w(-x,y)=p(x,y)-p(-x,y)+q(x,y)-q(-x,y)$
for $x$ and $y$ non negative close to $0$, and it does not vanish for
positive values of $x$ and $y$. Thus the degree of $p(x,y)-p(-x,y)$
has to be $2$, and $p(x,y)=\alpha xy$ with $\alpha>0$.

When $l_0=h$, we also get that $\Sigma$ is the graph of function $w$
over $[0,\eps]^2$ with $w(x,y)=\alpha xy+q(x,y)$, for the same choice
of coordinate system, with $\alpha$ and $q$ satifying the same
hypotheses as above.

Since $p_k\rightarrow P_{l_0}$, for $k$ large enough we get
$p_k=(x_k,y_k,w(x_k,y_k))$, with $(x_k,y_k) \in
[-l_k,\eps]\times[0,\epsilon]$. Since $p_k\in \Sigma_{l_k}^-\cap
\Sigma_{l_k}^{+*}$, we have :
\begin{equation}\label{egalite}
  w(x_k,y_k)=w(2(l_0-l_k)-x_k,y_k)
\end{equation}
But if $(x,y)\in[\lambda,\eps]\times[0,\eps]$ we have:
\begin{align*}
w(x,y)-w(2\lambda-x,y)&\ge
2\alpha(x-\lambda)y-2\sup_{u\in[-\eps,\eps]}
|\partial_xq(u,y)|(x-\lambda)
\end{align*}
Since $0=w(x,0)=q(x,0)$, we get
$$
|\partial_xq(u,y)|\le \sup_{v\in[0,\eps]}|\partial_y\partial_x
q(u,v)|y\le C\sqrt{u^2+\eps^2}y
$$
Thus
\begin{align*}
w(x,y)-w(2\lambda-x,y)&\ge
2\alpha(x-\lambda)y-C2\sqrt{2}\eps(x-\lambda)y\\
&\ge 2[\alpha-\sqrt{2}C\eps](x-\lambda)y ,
\end{align*}
which is positive if $x>\lambda$, $y>0$ and $\eps$ is small
enough. This contradicts \eqref{egalite} when $k$ is large enough.

We then have proved that: for any $l\in (h/2,h]$, $\Sigma_l^-\cap
\Sigma_l^{+*}= \emptyset$ and $\Sigma$ is never vertical along $\Sigma_l$.

\medskip

Therefore, we obtain that either $\Sigma_{h/2}^- =\Sigma_{h/2}^{+*}$
and it is a vertical bigraph with respect to $\{t=h/2\}$, or
$\Sigma_{h/2}^-$ and $\Sigma_{h/2}^{+*}$ are two non intersecting
minimal surfaces with the same boundary. In this second case,
$\Sigma_{h/2}^-$ is clearly below $\Sigma_{h/2}^{+*}$ along the
$\G\cap \Pi_0$. By symmetry by $\Pi_{h/2}$ this implies that
$\Sigma_{h/2}^+$ is below $\Sigma_{h/2}^{-*}$ along $\G\cap
\Pi_h$. But doing Alexandrov reflection technique as above with the
slices $\Pi_l$, $l\in[0,h/2]$, we get that $\Sigma_{h/2}^+$ is above
$\Sigma_{h/2}^{-*}$ along $\G\cap \Pi_h$.  Finally, we have proved
$\Sigma_{h/2}^-=\Sigma_{h/2}^{+*}$.


\textsc{Laurent Mazet}

Universit\'e Paris-Est,

Laboratoire d'Analyse et Math\'ematiques Appliqu\'ees, UMR 8050

UFR de Sciences et technologies, D\'epartement de Math\'ematiques

61 avenue du g\'en\'eral de Gaulle 94010 Cr\'eteil cedex (France)

\texttt{laurent.mazet@univ-paris12.fr}

\medskip

\textsc{M. Magdalena Rodr\'\i guez}

Universidad de Granada,

Departamento de Geometr\'\i a y Topolog\'\i a

Campus de Fuentenueva, s/n

18071 Granada (Spain)

\texttt{magdarp@ugr.es}

\medskip

\textsc{Harold Rosenberg}

Instituto de Matematica Pura y Aplicada,

110 Estrada Dona Castorina,

22460-320 Rio de Janeiro, Brazil

\texttt{hrosen@free.fr}
\end{document}